\newtheorem{lem}{Lemma}
\theoremstyle{remark}
\newtheorem{remark}{Remark}
\def\1{1\!{\rm l}}
\newcommand{\leqa}{\lesssim}
\newcommand{\geqa}{\gtrsim}
\newcommand{\EM}{\ensuremath}
\newcommand{\al}{\alpha}
\newcommand{\ga}{\gamma}
\newcommand{\la}{\lambda}
\newcommand{\te}{\theta}
\newcommand{\ta}{\vartheta}
\newcommand{\veps}{\varepsilon}
\newcommand{\vphi}{\varphi}
\newcommand{\cA}{\EM{\mathcal{A}}}
\newcommand{\cC}{\EM{\mathcal{C}}}
\newcommand{\cF}{\EM{\mathcal{F}}}
\newcommand{\cI}{\EM{\mathcal{I}}}
\newcommand{\cM}{\EM{\mathcal{M}}}
\newcommand{\cN}{\EM{\mathcal{N}}}
\newcommand{\cP}{\EM{\mathcal{P}}}
\newcommand{\cS}{\EM{\mathcal{S}}}
\newcommand{\cT}{\EM{\mathcal{T}}}
\newcommand{\cU}{\EM{\mathcal{U}}}
\newcommand{\cV}{\EM{\mathcal{V}}}
\newcommand{\psg}{{\langle}}
\newcommand{\psd}{{\rangle}}
\DeclareMathAlphabet{\mathpzc}{OT1}{pzc}{m}{it}
\newcommand{\noi}{\noindent}
\newcommand{\RR}{\mathbb{R}}
\newcommand{\given}{\,|\,}
\definecolor{blendedblue}{rgb}{0.2,0.2,0.7}
\newcommand{\rn}{\sqrt{n}}
\newcommand{\vp}{\vspace{.5cm}}
\newcommand{\mockalph}[1]{}
\theoremstyle{plain}
\newtheorem{thm}{Theorem}
\begin{document}

\begin{frontmatter}
\title{Uniform estimation in stochastic block models is slow}
\runtitle{Uniform estimation in stochastic block models}

\begin{aug}
\author{\fnms{Isma\"el} \snm{Castillo}\ead[label=e1]{ismael.castillo@sorbonne-universite.fr}}
\address{
  Laboratoire Probabilit\'es, Statistique et Mod\'elisation\\
  Sorbonne Universit\'e\\
  \printead{e1}
}
\author{\fnms{Peter} \snm{Orbanz}\ead[label=e2]{porbanz@gatsby.ucl.ac.uk}}
\address{
  Gatsby Computational Neuroscience Unit\\
  University College London\\
  \printead{e2}
}

\runauthor{I. Castillo and P. Orbanz}

\affiliation{LPSM, Sorbonne Université and Gatsby Unit, UCL}

\end{aug}

\begin{abstract}
  We explicitly quantify the empirically observed phenomenon that
  estimation under a stochastic block model (SBM) is hard if the model
  contains classes that are similar. More precisely, we consider 
  estimation of certain functionals of random graphs  generated by a
  SBM. The SBM may or may not be sparse, and the number of classes
  may be fixed or grow with the number of vertices.
  Minimax lower and upper bounds of estimation along specific submodels are derived.  The results are nonasymptotic and imply that uniform estimation of a single connectivity parameter is much slower than the expected asymptotic pointwise rate. Specifically, the uniform quadratic rate does not scale as the number of edges, but only as the number of vertices. The lower bounds  are local around any possible SBM.
 An analogous result is derived for functionals of a class of smooth graphons.
\end{abstract}

\begin{keyword}[class=MSC]
\kwd[Primary ]{62G20}
\end{keyword}

\begin{keyword}
\kwd{Stochastic Blockmodel, semiparametric estimation of functionals, minimax rates, spectral clustering, graphon model}
\end{keyword}
\end{frontmatter}

\section{Introduction}

Network data occurs in a range of fields, and its analysis has become a highly interdisciplinary effort
\citep{Easley:Kleinberg:2010:1,Handcock:Gile:2010:1,Kolaczyk:2009:1,Newman:2009}.
In statistical network analysis, two classes of models have recently received particular attention: Graphon models
\citep{Borgs:Chayes:Lovasz:Sos:Vesztergombi:2008,Hoff:Raftery:Handcock:2002:1,Orbanz:Roy:2014}, 
and the subclass of stochastic block models (SBMs) \citep{allmanetal11,bickel13,Goldenberg:Zheng:Fienberg:Airoldi:2009}.
The results of this paper show, informally speaking, that estimation
under a SBM becomes difficult if the parameters specifying two
classes are close to each other.

SBM and graphon models parametrize a random graph by a 
symmetric measurable function $w$, which can be interpreted as representing an adjacency matrix in the limit
of infinite graph size \citep{Borgs:Chayes:Lovasz:Sos:Vesztergombi:2008}. 
In a SBM, the function is in particular piece-wise constant. 
Examples of statistical problems arising in this field include \emph{estimation problems} (see below), 
\emph{class label recovery}
\citep{bickelchen09,Lei:Rinaldo:2015:1,Lei:Zhu:2017:1,vvvp,zhangzhou}, and \emph{signal detection}, which refers to testing for the presence of a signal in settings where observed data constitutes a network or array \citep{accd11,ariascv14,butuceaingster,verzelenac15}. 

We consider estimation problems. SBMs label each vertex in a graph
with a category (a ``community''), and this labelling is typically not
observed. There is a substantial body of work on rates of estimation
in such models \citep{bickel13,allmanetal11,celisseetal,channarondetal,ambroisematias12,bickeletal11}. 
This literature considers asymptotic pointwise rates, and shows that,
informally speaking, estimators of finite-dimensional statistics can converge quickly even if the
labelling of vertices is not observed. Estimation of the
entire function $w$ has also been studied
\citep{chaogr15,kvt16,wolfeohlede}. In a case where this parameter has
infinite dimension and is estimated in a uniform
way, \citet*{chaogr15} show that not 
observing labels slows the rate.
Our results show that that is not a consequence of the nonparametric
setting: The best uniform (rather than pointwise) rate for estimating a finite-dimensional statistic---even a very
simple one, and under a very simple parametric SBM---is slow. The same
holds for a simple, one-dimensional functional of a smooth, infinite-dimensional
parameter function $w$.

\subsection{An informal overview}

The remainder of this section provides an informal overview of our
results. Rigorous definitions and statements follow in the next
sections. A  
SBM is defined by two parameters, a probability distribution
${\pi}$ on $k$ categories (which we regard as a vector in $[0,1]^k$), and a matrix ${M\in[0,1]^{k\times k}}$.  
The model generates undirected random graphs $G_n(\pi,M)$ of any size
${n\in\mathbb{N}}$: Each vertex ${i\leq n}$ is assigned a category
$\varphi(i)\in\{1,\ldots,k\}$ drawn randomly from $\pi$, and an edge between vertices $i$
and $j$ is then added with probability ${M_{\varphi(i)\varphi(j)}}$. (A
proper definition follows in Section \ref{sec:defs}.)
\\[.5em]
{\bf The main results}.
SBMs pose an estimation problem: Given an observed graph
$G_n$ with $n$ vertices, estimate $\pi$ and $M$.
The purpose of our work is to
show that, loosely speaking, the estimation problem can be harder than it appears,
or indeed than previous results may suggest at first glance.
Our results are phrased as minimax lower bounds: Suppose $\mathcal{S}$
is a set of parameter pairs $(\pi,M)$. A 
minimax bound specifies a decreasing function $\tau$ such that, informally,
\begin{align*}
  \inf_{\text{all estimators}}\;\sup_{(\pi,M)\in\mathcal{S}}\;\mathbb{E}_{\pi,M}[\|& \text{estimate
      of $(\pi,M)$ computed from $G_n(\pi,M)$} \\
    &  \;-\;(\pi,M)\|]^2
  \;\geq\;
  \tau(n)\;.
\end{align*}
That is, given an observed graph with $n$ vertices, there exists no estimator whose quadratic risk
is smaller than $\tau(n)$ for all
parameter values in $\mathcal{S}$.
Since the supremum means that shrinking
$\mathcal{S}$ will not increase the lower bound, it can suffice to
consider a subclass ${\mathcal{S}'\subset\mathcal{S}}$ of
parameters---any lower bound for $\mathcal{S}'$ is also a lower bound
for $\mathcal{S}$. Indeed, we will see that one can obtain a
meaningful bound by choosing a very small subclass with one degree of freedom, where
$\pi$ is fixed to the uniform distribution, and the matrix $M$ is a
function $M(\theta)$ of 
a one-dimensional surrogate parameter ${\theta\in[-1/2,1/2]}$.
The statement above then takes the form
\begin{align}
  \inf_{\text{all estimators}}\;\sup_{\theta\in[-1/2,1/2]}& \;\mathbb{E}_{\theta}[\text{estimate
      of $\theta$ computed from $G_n(\pi,M(\theta))$}\;-\;\theta]^2 \nonumber \\
  & \;\geq\;
  \tau(n)\;.\label{bound:intro}
\end{align}
Our main result shows that the relevant 
lower bound is 
\begin{equation*}
  \tau(n)\;=\;\frac{\text{constant}\cdot k}{n}\;,
\end{equation*}
where $k=k(n)$ is permitted to depend on $n$.
This is Theorem \ref{thm-twocl} (for the simplest case ${k=2}$) and 
Theorem \ref{thm-kcl} (for $k=k(n)\geq 2$).
Indeed our proofs imply a stronger statement:
\begin{itemize}
\item The results are completely non-asymptotic, and
  it is possible to explicitly determine numerical values for all relevant
  constants. See Remark \ref{remark:numerical:constants} for an example.
\item The minimax bound holds \emph{locally}, not just globally:
  In principle, a slow minimax rate may be caused by just a few ``pathological'' points in the set $\mathcal{S}$.
  One can ask whether the rate $\tau$ can be
  improved by removing a small part of $\mathcal{S}$. 
  That is not the case
  here: Shrinking
  the set of all SBM parameter pairs $(\pi,M)$ to any open Euclidean
  neighborhood of any specific pair still results in the
  same rate (see Section \ref{sec2:comments}).
  Informally, every
  region of parameter space contains parameters that prevent the rate
  from improving. 
\end{itemize}
SBMs are often used in ``sparse'' forms, and we verify
in Appendix \ref{sec_spa} that the result also applies in the
sparse case. Since sparsification reduces the amount of available
data, it slows convergence.
Theorems \ref{thm-twocl-spa} and \ref{thm-kcl-spa} show the rate in the sparse setting also scales
linearly in the expected number of edges.
\\[.5em]
{\noindent\bf Interpretation}.
If we were to simplify the estimation problem artificially by assuming that
the assignments variables $\varphi(i)$ are observed, $\pi$ could be estimated at rate ${1/\sqrt{n}}$ 
and $M$ at rate $1/n$, both by computing sample averages.
(The rates differ since $\pi$ is estimated from $n$ vertices, whereas $M$ is estimated from edges, and the expected number of edges grows quadratically 
with $n$.) 
Since our bounds are phrased in terms of a quadratic risk, both rates
must be squared: in the sequel,  a bound
${\tau(n)\approx 1/n}$ as above is referred to as  {\em slow rate}; by contrast,  a {\em fast rate} corresponds to  
${\tau(n)\approx 1/n^2}$. 

One must distinguish uniform rates (which hold uniformly over sets of parameters) and asymptotic pointwise rates
(where asymptotics in $n$ are considered at just a given point).
Previous work has established that estimation of $M$ can be fast
even if the $\varphi(i)$ are not observed. For example, a remarkable result of 
\citep{bickel13} shows that,
if $\hat{\pi}$ and $\hat{M}$ are chosen as certain profile
maximum likelihood estimators, then, as $n\to\infty$,
\begin{align} \label{asyn}
  \sqrt{n}(\hat{\pi}-\pi)
  \;&\rightarrow\;Z_\pi
  &&\text{and}&
  n(\hat{M}-M)
  \;&\rightarrow\;Z_M
  &&\text{in distribution}.
\end{align}
where $Z_\pi$ and $Z_M$ are multivariate Gaussian variables.
This holds up to label switching (see Section \ref{sec:defs}), and requires
that the columns of $M$ are ``not too similar''.
Related results can be found in \citep{allmanetal11,celisseetal,channarondetal}. 
Since this result does not use a quadratic risk, it can be paraphrased
informally as:
\begin{itemize}
  \item
  Under suitable conditions on the model, the matrix $M$ can be
  estimated, at least asymptotically and pointwise, at a fast rate.
\end{itemize}
It has long been recognized in statistics that pointwise asymptotic rates
can be hard to interpret:
As $(\pi,M)$ runs through some
set $\mathcal{S}$, the constants implicit in the rate may change locally around a given parameter as well as with $n$, and if they do so quickly
enough, that results in an effective change in the rate. The Hodges phenomenon, for example,
illustrates that highly pathological behavior of an estimator may only be visible in its uniform rate, whereas the asymptotic pointwise rate
suggests good performance \citep[see e.g. Section 8 and Figure 8.1 in][]{vanDerVaart}.
Our result says:
\begin{itemize}
  \item If measured uniformly over any given neighborhood in parameter
  space, the best achievable rate for connectivity parameters (i.e. for $M$) is always slow.
\end{itemize}
In other words, the change of constants is indeed an issue here, and makes the rate drop from a
fast to a slow one. Since the minimax bound is local, this problem
cannot be avoided by removing some (fixed) parameters $(\pi,M)$.\\[.5em]
{\bf Further results}. Section \ref{sec:ub} and the Appendix provide additional results on
achievability, i.e.\ upper bounds to complement the lower ones, and on graphon models and
sparse graphs.\\[.5em]
{\em Upper bounds}. Like most
lower bound results, Theorems \ref{thm-twocl} and \ref{thm-kcl} do not
show whether the bound $\tau$ is achievable---that is, the convergence
rate of any actual estimator could be even slower than $\tau$.
To show that a rate is achievable uniformly, one has to specify an estimator  whose uniform risk matches the lower bound up to constants.  Estimators for SBMs and their convergence rates are subject of a substantial literature,
but these rates are, once again, generally pointwise. Obtaining a
tight uniform upper bound for arbitrary SBMs is beyond the scope of
this work, but we do consider the ``hard'' one-dimensional model \eqref{bound:intro},
and show the following:
\begin{itemize}
  \item For estimation of $\theta$ in \eqref{bound:intro}, the rate
    $\tau$ is achieved by a type of maximum likelihood estimator. 
    That is shown in Theorem \ref{thmub} (for $k=2$) and
    in Theorem \ref{ubk} (for general $k$), in Section \ref{subsec-mle}.
\end{itemize}
However, this estimator is not generally computable in polynomial
time, which raises the additional question whether the problem exhibits
a computational gap---that
is, whether this is a problem where a sample of size $n$ contains
enough information to achieve a given rate $\tau(n)$, but this
information cannot be extracted in polynomial time, and every
practically computable estimate converges at a slower rate.
In this context, we show:
\begin{itemize}
  \item Under additional conditions, a spectral estimator
    (based on work of \citet{Lei:Zhu:2017:1}) achieves $\tau$, and is
    computable in polynomial time. (See Theorem \ref{thm_ubspec}
    in Section \ref{sec:spec:2} for $k=2$ classes, and the appendix for the general case
    $k\geq 2$.)
\end{itemize}
Thus, in the submodel specified by the conditions, there is no computational gap. We do not know
at present whether the same holds for general SBMs.\\[.5em]
{\em Smooth graphons}.
SBMs are a special case of so-called graphon models, which parametrize
a random graph $G_n(w)$ on $n$ vertices by a function $w$ of a certain
form. In SBMs, $w$ is piece-wise constant. Section \ref{sec4} instead considers
a class $\mathcal{S}$ of smooth graphons. It is known that uniform
estimation of such a graphon $w$ from $G_n(w)$ is only possible
at a slow rate \citep{chaogr15,kvt16}. Theorem \ref{thm-quad}
considers a simple, real-valued statistic
$\vartheta(w)$ that can be read as a form of standard deviation. It
shows that
\begin{align*}
  \inf_{\text{all estimators}}\;\sup_{w\in\mathcal{S}}& \;\mathbb{E}_{w}[\text{estimate
      of $\vartheta(w)$ computed from $G_n(w)$}\;-\;\vartheta(w)]^2\\
&  \;\geq\;
  \text{constant}\cdot\frac{1}{n}\;.
\end{align*}
In other words, even if the 
infinite-dimensional quantity $w$ is substituted by the much simpler,
one-dimensional quantity $\vartheta$, the rate is still slow.
In this sense, Theorem \ref{thm-quad} can be seen as a semiparametric counterpart to
the nonparametric results of \citep{chaogr15}.

\makeatletter
\renewcommand\tableofcontents{%
    \@starttoc{toc}%
}
\makeatother
\subsection{Contents}
\setcounter{tocdepth}{1}
\renewcommand{\contentsname}{\empty}
\tableofcontents

\vp

\section{Preliminaries and notation}
\label{sec:defs}

This section defines the models we consider, and briefly reviews some
related background.\\[.5em]
{\bf Notation}. We abbreviate ${[k]:=\{1,\ldots,k\}}$, so that  
${[k]^n}$ is the set
 of all mappings $\{1,\ldots,n\}\to \{1,\ldots,k\}$. 
For a subset $A$ of the integers, $|A|$ denotes cardinality. If $M$ is
a square matrix, $\|M\|_F$ is its Frobenius norm 
and $\|M\|_{Sp}$ its spectral norm. Let 
  Be$(p)$ be a shorthand for the  Bernoulli$(p)$ distribution. By $ER(p)$, we denote the law of
an Erd\"os-Renyi random graph edge probability $p$ over $n$ nodes,
that is, ${ER(p)=ER(p,n)=\otimes_{i<j\leq n} \text{Be}(p)}$, where $\otimes$ denotes a tensor product of distributions. 
\\[.5em]
{\bf Stochastic block models}.
Consider sampling at random 
an undirected, simple graph $G$ 
 on the vertex set $\cV=\{1,\ldots,n\}$ as follows.  Fix some $k\in\{1,\ldots,n\}$. 
Let ${\pi=(\pi_1,\ldots,\pi_k)}$ be a probability distribution 
on the set ${\lbrace 1,\ldots,k\rbrace}$, with $\pi$ identified as a line 
vector of size $k$. 
Let ${M}:=(M_{lm})$ be a symmetric ${k\times k}$ matrix with elements ${M_{lm}\in[0,1]}$.
To sample a graph $G$, we generate its adjacency matrix $X=(X_{ij})_{i,j\in\cV}$. Since $G$ is undirected, it suffices
to sample entries with ${i<j}$, 
\begin{enumerate}
  \item For each vertex $i\in \cV$, independently generate a label ${\vphi(i)\,\sim\,\pi}$.
  \item For each pair ${i<j}$ in $\cV$, independently sample from the distribution ${X_{ij}\,|\, \vphi(i),\vphi(j)\,\sim\, \text{Be}(M_{\vphi(i)\vphi(j)})}$.
\end{enumerate}
In this notation, $\vphi$ is a (random) mapping $\vphi:\{1,\ldots,n\}\to\{1,\ldots,k\}$ that attributes a label to each node of the graph. It is random because labels are by definition randomly sampled. 
The distribution $P_{\pi,M}$ so defined on the set of undirected, simple graphs
 is called a \emph{stochastic blockmodel} of order $k$ with parameters $\pi$ and $M$. 
One can also write 
 \begin{align}
 \begin{aligned} \label{modsbm}
 (\vphi(1),\ldots,\vphi(n))\ & \sim\ \pi^{\otimes n} \\
  (X_{ij})_{i<j}\, \given\, \vphi\ & \sim\  \bigotimes_{i<j}  \text{Be}(M_{\vphi(i)\vphi(j)}),
 \end{aligned} 
 \end{align}         
where  $\pi^{\otimes n}=\pi\otimes\cdots\otimes \pi$, and here and in the sequel $i<j$ refers to all pairs of indices
$(i,j)\in\cV^2$ with $i<j$. Any given $\vphi$ partitions the vertex set ${\lbrace 1,\ldots,n\rbrace}$ into
$k$ distinct classes.
We call $\pi$ the {\em proportions} vector and $M$ the matrix of {\em connectivity} parameters. 
\\[.5em]
{\bf Graphon models}. 
SBMs can be regarded as a special case of a more general class of 
random graphs, parametrized by
the set
of all measurable functions ${w:[0,1]^2\rightarrow[0,1]}$ that are symmetric,
i.e.\ ${w(x,y)=w(y,x)}$. Any such $w$ defines a random graph $G$: denoting  by $\text{Unif}[0,1]$ the uniform distribution on $[0,1]$, and $(U_i)_i=(U_i)_{1\le i\le n}$, set
 \begin{align}
  \begin{aligned} \label{modgr}
     (U_i)_{i} \ & \sim \ \text{Unif}[0,1]^{\otimes n} \\
     (X_{ij})_{i<j}\, \given\, (U_i)_i  \  & \sim  \ 
           \bigotimes_{i<j}  \text{Be}(w(U_i,U_j)).
   \end{aligned}
\end{align}     
The law $P_w$ of the graph $G$ defined by the random matrix $X$ in \eqref{modgr} is called a \emph{graphon model}
\citep{Borgs:Chayes:Lovasz:Sos:Vesztergombi:2008}.
SBMs are recovered by choosing $w$ as a histogram:
subdivide the unit interval into $k$ intervals ${I_s:=[\,\sum_{i<s}\pi_i,\sum_{i\leq s}\pi_i)}$
of respective lengths $\pi_s$, and set
\begin{equation}
  \label{SBM:graphon}
  w(x,y):=M_{ij} \quad\text{ for }\quad x\in I_i, y\in I_j\;.
\end{equation}
Then $P_w=P_{\pi,M}$. In a graphon model, the continuous vertex labels $U_i$
are almost surely distinct; in a stochastic block model, labels coincide whenever two 
vertices belong to the same class. Thus, the SBM labels 
can be regarded as discretization of graphon labels. Conversely, any graphon can be approximated
by a sequence of stochastic blockmodels of increasing order $k$; indeed, the set of stochastic
blockmodels---that is, of graphons of the form \eqref{SBM:graphon} for all $k$, $\pi$ and $M$---is
dense in the set of functions $w$ endowed with its natural topology \citep[see e.g.][for details]{Janson:2013:1}. This idea
can be used to construct SBM-valued estimators for graphons
\citep{wolfeohlede,chaogr15}.
SBMs and graphon models both generalize to directed graphs, 
by dropping the symmetry constraints on $\pi$ and $w$, and requiring only 
${i\neq j}$ rather than ${i<j}$; 
in the following, we consider only the undirected case.
\\[.5em]
{\bf Label switching and identifiability}.
The distribution \eqref{modgr} remains invariant if $w$ is replaced by $w\circ g$, for any measure-preserving transformation $g$ of $[0,1]$: $P_w=P_{\tilde{w}}$ for ${\tilde{w}(x,y)=w(g(x),g(y))}$. More generally, two graphons $w$ and $w'$ are considered equivalent if ${P_w=P_{w'}}$. The equivalence class $\left<w\right>$ of $w$ is called a graph limit. 
Similarly in \eqref{modsbm}, if $\sigma$ is a fixed arbitrary permutation 
of $\{1,\ldots,k\}$, with permutation matrix $\Sigma$, then $P_{\pi,M}=P_{\pi\Sigma,\Sigma M\Sigma^T}$. The parameters of the SBM can only be recovered up to label switching. We refer to  \cite{allmanetal11} and \cite{celisseetal} for detailed identifiability statements.
\\[.5em]
{\bf Fixed and random design.} In models \eqref{modsbm}-\eqref{modgr}, the latent variables, respectively $\vphi$ and $U$, are random. Sometimes, a slightly different version of the model is considered, where $\vphi$ and $U$ are still unobserved, but fixed, non-random quantities. For instance, under this setting \eqref{modsbm}  becomes
\[(X_{ij})_{i<j}\,  \sim\  \bigotimes_{i<j}  \text{Be}(M_{\vphi(i)\vphi(j)}),\]
for a given, unknown, $\vphi:\{1,\ldots,n\}\to\{1,\ldots,k\}$, and the data distribution is denoted $P_{\vphi,M}$. 
Such models will be referred to as {\em fixed design} SBM and {\em random 
design}  SBM respectively. The term SBM as used in the literature
typically refers to a random design.
Some theoretical arguments simplify in the fixed design case, for
which the data distribution is a product measure, rather than a
mixture of products measures. Most results below are obtained for both cases.
\\[.5em]
{\bf Mixture interpretation.}
The $n$-tuple $(U_i)$ in a graphon model, or, equivalenly, the mapping $\vphi$ in a SBM, are in general
not observed, and can hence be interpreted as latent variables. In other words, the distribution of 
the data $(X_{ij})_{i<j}$ is a {\em mixture}.
The mixture representation is useful
to relate fixed and random designs to each other. In the random design
case, we have
\begin{equation}\label{distrib}
 P_{\pi,M} = \sum_{\vphi\in [k]^n} \mu_{\pi}[\vphi] \,
\bigotimes_{i<j} \text{Be}(M_{\vphi(i)\vphi(j)}),
\end{equation}
where $\mu_{\pi}[\vphi] = \prod_{l=1}^k \pi_l^{N_l(\vphi)}$ and
$N_j(\vphi)=\sum_{i=1}^n \1_{\vphi(i)=j}$
is the number of times the label $j$ is present. 
In the fixed design model, the labels given through $\vphi$ are also
unobserved, but fixed, so that the distribution is $P_{\vphi,M}$ given by 
\[ P_{\vphi,M} = \bigotimes_{i<j} \text{Be}(M_{\vphi(i)\vphi(j)}).\]

\section{Main results: lower bounds} \label{sec:lbs}

In this section, we first construct in Section \ref{sec2} natural submodels of a SBM with $k=2$ along which 
the two classes become close, and derive an estimation lower bound in terms of the quadratic risk for the submodel parameter. We then consider in Section \ref{sec3} the more general setting of a SBM with $k$ classes `containing' the previously constructed difficult submodel and derive a minimax estimation lower bound in this setting, which is local around any possible SBM of this type, as  we discuss in more detail in Section \ref{sec2:comments}.

\subsection{The case $k=2$}
\label{sec2}

Consider the set of distributions
\begin{equation} \label{modk2}
\cM  = \left\{ P_\te:= P_{e,Q^\te},\quad \te\in[-1/2,1/2] \right\}\;,
\end{equation}
where $e$ and $Q^\te$ are given by 
\begin{align} 
 e\ \  =  & \quad \left[\frac12\, , \, \frac12\right] \label{prop2} \\
 Q^\theta  \ = & 
\begin{bmatrix}
     \frac12 + \theta & \frac12 - \theta\\
     \frac12 - \theta  & \frac12 + \theta
\end{bmatrix}. \label{simat}
\end{align}
The set $\cM$ is a $1$--dimensional submodel of the set of all SBMs with at most two classes. For $\te=0$ the matrix $Q^0$ is degenerate and the model is simply an Erd\"os-Reyni graph model with parameter $1/2$, that is all edges are independent and have a probability $1/2$ of being present. 
SBMs with connectivity matrices that---like $Q^\te$ above---specify
only two, one for intra-group and one for between-group
connections, are known as affiliation models \citep[e.g.][]{ambroisematias12,ariascv14,Mossel2016}.

\begin{thm} \label{thm-twocl}          
Consider a stochastic blockmodel \eqref{modsbm} with $k=2$ specified by 
$\cM$, that is $P_\te=P_{e,Q^\te}$ with $e,Q^\te$ given by  \eqref{prop2}-\eqref{simat}. There exists a constant $c_1>0$ such that  for all $n\ge 2$,
\[ \inf_{T} \sup_{\te\in[-1/2,1/2]} E_{\te} \left[ T(X) - \theta \right]^2       
\ge \frac{c_1}{n}, \]                                            
where the infimum is taken over all estimators $T$ of $\te$ in the model $\cM$. 
\end{thm}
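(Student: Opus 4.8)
The plan is to prove the lower bound by Le Cam's two-point method, comparing the model $P_\te$ at $\te=0$ with $P_\te$ at $\te_n:=\sqrt{c_0/n}$, where $c_0\in(0,1/4)$ is a small absolute constant to be fixed at the end; since $\te_n\le\sqrt{c_0/2}<1/2$, both parameters lie in $[-1/2,1/2]$ for every $n\ge2$. The standard two-point reduction for quadratic loss gives, with an absolute constant $c>0$,
\[
\inf_T\sup_{\te\in[-1/2,1/2]}E_\te\bigl[T(X)-\te\bigr]^2\ \ge\ c\,\te_n^2\,\bigl(1-\|P_{\te_n}-P_0\|_{TV}\bigr),
\]
so, since $\te_n^2=c_0/n$, it suffices to show that $\|P_{\te_n}-P_0\|_{TV}$ stays bounded away from $1$ uniformly in $n$.

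The key structural observation I would exploit is that the reference measure $P_0=P_{e,Q^0}$ is the product measure $ER(1/2)=\bigotimes_{i<j}\text{Be}(1/2)$, which makes a chi-square computation exact. First I would rewrite the mixture \eqref{distrib} for $P_{\te_n}$ in a ``spin'' parametrisation: encoding the two labels by $s_i\in\{-1,+1\}$, the $s_i$ are i.i.d.\ uniform on $\{\pm1\}$ and, given $s$, the $X_{ij}$ are independent $\text{Be}(\tfrac12+\te_n s_is_j)$. Writing $Y_{ij}:=2X_{ij}-1$, a direct computation gives the likelihood ratio
\[
L(X):=\frac{dP_{\te_n}}{dP_0}(X)=\mathbb{E}_s\Bigl[\,\prod_{i<j}\bigl(1+2\te_n s_is_j Y_{ij}\bigr)\Bigr].
\]
Using $\|P_{\te_n}-P_0\|_{TV}\le\tfrac12\sqrt{\chi^2(P_{\te_n}\,\|\,P_0)}$ and $\chi^2(P_{\te_n}\,\|\,P_0)=\mathbb{E}_{P_0}[L^2]-1$, the task reduces to controlling $\mathbb{E}_{P_0}[L^2]$.

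To bound $\mathbb{E}_{P_0}[L^2]$ I would introduce an independent copy $s'=(s'_i)$ of the spin vector. Since under $P_0$ the variables $Y_{ij}$ are i.i.d.\ Rademacher, all cross terms in the expansion vanish and
\[
\mathbb{E}_{P_0}[L^2]=\mathbb{E}_{s,s'}\prod_{i<j}\bigl(1+4\te_n^2\, s_is_j\,s'_is'_j\bigr).
\]
Setting $t_i:=s_is'_i$ (i.i.d.\ Rademacher) and $W:=\sum_{i=1}^n t_i$, and using $\sum_{i<j}t_it_j=\tfrac12(W^2-n)$ together with $1+x\le e^x$ (valid term by term because $4\te_n^2<1$), this is at most $e^{-2n\te_n^2}\,\mathbb{E}[e^{2\te_n^2 W^2}]$. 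I would then linearise the quadratic exponent with an auxiliary standard Gaussian $g$, writing $e^{2\te_n^2 W^2}=\mathbb{E}_g\,e^{2\te_n gW}$, and use Fubini and $\mathbb{E}\,e^{a t_i}=\cosh a\le e^{a^2/2}$ to get, for $4n\te_n^2<1$,
\[
\mathbb{E}\bigl[e^{2\te_n^2 W^2}\bigr]=\mathbb{E}_g\bigl[(\cosh(2\te_n g))^n\bigr]\ \le\ \mathbb{E}_g\bigl[e^{2n\te_n^2 g^2}\bigr]=(1-4n\te_n^2)^{-1/2}.
\]

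Putting $\te_n^2=c_0/n$ this yields $\mathbb{E}_{P_0}[L^2]\le e^{-2c_0}(1-4c_0)^{-1/2}$, a quantity that tends to $1$ as $c_0\downarrow 0$; so for $c_0$ chosen small enough, $\chi^2(P_{\te_n}\,\|\,P_0)$ — and hence $\|P_{\te_n}-P_0\|_{TV}$ — can be made strictly below $1$ uniformly in $n\ge2$, and the two-point reduction then closes the proof with $c_1$ a small multiple of $c_0$ (note $\te_n<1/2$ throughout). I expect the chi-square estimate to be the only genuinely non-routine step: one needs its exact closed form and, crucially, a bound valid for every finite $n\ge2$ rather than merely asymptotically, which is why I would prove it via the sub-Gaussian inequality $\cosh a\le e^{a^2/2}$ (equivalently, by bounding the moment generating function of $W^2$ directly) rather than through a central limit theorem for the overlap $W/\sqrt n$. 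Everything else is the standard Le Cam bookkeeping.
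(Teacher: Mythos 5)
Your proof is correct, and its skeleton coincides with the paper's: both arguments use Le Cam's method with the null hypothesis $P_0=ER(1/2)$ against $P_{\te_n}$ at $\te_n\asymp n^{-1/2}$, both pass to the $\chi^2$-divergence of the label mixture against the product measure, and both arrive at exactly the same quantity $\mathbb{E}\exp\bigl\{4\te_n^2\sum_{i<j}t_it_j\bigr\}$ for i.i.d.\ Rademacher $t_i=s_is_i'$ (the paper's $R_i=\eta_i\eta_i'$). The one genuine difference is how this Rademacher chaos is controlled. The paper invokes a general Laplace-transform bound for second-order Rademacher chaos (Corollary 3.2.6 of de la Pe\~na and Gin\'e, its Lemma \ref{lemcaos}), which applies to arbitrary coefficient arrays and is reused verbatim in the $k$-class and graphon lower bounds, where the chaos has non-constant weights $a_{ij}$. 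You instead exploit the fact that here all weights equal one, write $\sum_{i<j}t_it_j=\tfrac12(W^2-n)$ with $W=\sum_i t_i$, and bound $\mathbb{E}e^{2\te_n^2W^2}$ by Gaussian linearisation and $\cosh a\le e^{a^2/2}$, obtaining the clean closed form $e^{-2c_0}(1-4c_0)^{-1/2}$. Your route is self-contained and gives explicit, easily optimised constants for this theorem (comparable in spirit to the paper's quantitative Lemma \ref{lemcaosc}), but it does not generalise to the weighted chaoses needed later in the paper, which is presumably why the authors set up the general lemma from the start. All intermediate steps in your computation (the likelihood-ratio formula, the vanishing of cross terms under $P_0$, the condition $4n\te_n^2<1$, and the final limit as $c_0\downarrow 0$) check out.
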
  
\begin{proof} See Section \ref{secpr2}.
\end{proof}

Theorem \ref{thm-twocl} states that, even in a very simple SBM with ${k=2}$ classes and only one unknown parameter in its connectivity matrix, the minimax estimation rate is no faster than $1/n$.
This is no contradiction to the fast rate obtained by \citet{bickel13} (meaning a $1/n$ rate for the convergence in distribution but a $1/n^2$ rate for the quadratic risk): the latter is a pointwise asymptotic result, and assumes that no two lines of the connectivity matrix are the same, whereas Theorem \ref{thm-twocl} is nonasymptotic and uniform. It shows that the rate in a two-class model changes for distributions close to an Erd\H{o}s-Renyi model ($k=1$); informally, models close to the `boundary' are harder to estimate. We note the result does not require the sub-model $\cM$ to include the Erd\H{o}s-Renyi model; see the remark below. 
The phenomenon is reminiscent of effects familiar from community detection, where matrices similar to  \eqref{simat} naturally arise as most difficult submodels. Community detection is a {\em testing} problem, though, as opposed to the {\em estimation} problem considered here.
For a different but related result in the very sparse case, see \citep{Mossel2016}.\\

\begin{remark}[different parameter choices] One can easily check that 
the result of Theorem \ref{thm-twocl} remains unchanged if  instead of
$1/2$ in the matrix $Q^\te$ in \eqref{simat}, another number
$a_0\in(0,1)$ is used. If $a_0$ is bounded away from $0$ and $1$,
assuming  $\min(a_0,1-a_0)\ge \rho>0$, then the result is only modified by constants.  Also, if the proportions vector $\pi$ is of the form 
$[b\,,\,1-b]$ with $b>0$, similar results continue to hold, provided the matrix $Q^\te$ is replaced by 
\[  
 Q^\theta_b  \ = 
\begin{bmatrix}
     \frac12 + c_b\theta & \frac12 - d_b\theta\\
     \frac12 - d_b\theta  & \frac12 + c_b\theta
\end{bmatrix}
  \]
for  suitable constants $c_b, d_b$ that depend on $b$ (one can take e.g. $c_b=1-b$ and $d_b=b$).
\end{remark}

\begin{remark}[numerical constants]
\label{remark:numerical:constants}
In Theorem \ref{thm-twocl}, one can take $c_1=1/107$; additionally, the 
supremum can be restricted to $(-\te_n,\te_n)$ for
\[ \te_n=\frac{c_0}{\sqrt{n}} \qquad \text{and}\quad c_0=\frac{1}{3\cdot2^{3/4}}\approx 0.56.\]
Moreover, the proof implies that one can restrict the supremum to a set not actually containing $\te=0$, but rather two points close enough to $\te=0$, namely $\te_1=c_1/\sqrt{n}$, $\te_2=c_2/\sqrt{n}$ for suitably chosen, fixed constants $c_1, c_2>0$.\\
\end{remark}
 
\noindent{\bf Fixed design}. A result similar to Theorem \ref{thm-twocl} holds for fixed designs.
In this case, the map $\vphi$ is deterministic, and the model can be written as
 $\cM_F=\{P_{\te,\vphi}:=P_{\vphi,Q^\te},\, 
\te\in[-1/2,1/2],\ \vphi\in [2]^n\}$. Expectations with respect to the measures $P_\te$ and $P_{\te,\vphi}$ are denoted respectively $E_\te$ and $E_{\te,\vphi}$. We then have
\[ \inf_{T_f} \sup_{\te\in[-1/2,1/2],\,\vphi\in[2]^n} 
E_{\te,\vphi} \left[ T_f(X) - \theta \right]^2
\ge \frac{c_1}{n}, \]  
where the infimum is taken over all estimators  $T_f$ of $\te$ in the fixed design model. The proof is the same as for Theorem \ref{thm-twocl}, see Section \ref{secpr2}.   \\

\subsection{The general case} 
\label{sec3}

We now consider an arbitrary number $k$ of classes. Above, we have perturbed a SBM with ${k=2}$ classes around an Erd\"os-Renyi model. We now similarly perturb a $k$-class SBM around one with ${k-1}$ classes.
The connectivity matrix of a SBM with at most ${k-1}$ classes 
is of the form, for $a_0,a_i,b_{ij}\in[0,1]$ for $i,j\in[k-2]$,
\begin{equation} \label{coM}
 M = 
\begin{bmatrix}
a_0 & a_1 & \cdots & a_{k-2} \\
a_1 & b_{11} & \cdots & b_{1 k-2}  \\
\vdots &   \vdots & & \vdots \\
a_{k-2} & b_{1 k-2} & \cdots & b_{k-2 k-2}                     
\end{bmatrix}.
\end{equation}
For simplicity of notation and easy comparison with Section \ref{sec2}, we assume $a_0=1/2$ throughout. Results are easily adapted to the case $a_0\in (0,1)$, requiring only that $a_0$ be bounded away from $0$ and $1$. If needed, one can ensure the number of classes is 
exactly ${k-1}$ by requiring no two rows of $M$ coincide, which we require only in Theorems \ref{thmspec} and \ref{thm-twocl-spa}, which describe the behavior of spectral estimators.

We consider $1$-dimensional submodels in the parameter space of connectivity matrices: Set 
\begin{equation} 
  e_k  = \left[\frac1k, \cdots, \frac1k \right], \label{propk} 
\end{equation}
and, for coefficients $\{a_i\}, \{b_{ij}\}$ as above, define
\begin{equation}
 M^\theta   
= \begin{bmatrix}
     \frac12 + \theta & \frac12 - \theta & a_1 & \cdots & a_{k-2} \vspace{.1cm}\\     
     \frac12 - \theta  & \frac12 + \theta & a_1 & \cdots & a_{k-2} \\
     a_1 & a_1 & b_{11} & \cdots & b_{1 k-2}  \\
\vdots &  \vdots & \vdots & & \vdots \\
a_{k-2} & a_{k-2} & b_{1 k-2} & \cdots & b_{k-2 k-2}    
\end{bmatrix}
=
 \begin{bmatrix}
Q^\theta & A\\
A^T & B                        
\end{bmatrix}
,
 \label{zte}
\end{equation}    
where
\[ Q^\theta  \ =  
\begin{bmatrix}
     \frac12 + \theta & \frac12 - \theta \vspace{.1cm}\\
     \frac12 - \theta  & \frac12 + \theta
\end{bmatrix} \]
and 
\[ A = 
\begin{bmatrix}
a_1 & a_2 & \ldots & a_{k-2} \\
a_1 & a_2 & \ldots & a_{k-2}
\end{bmatrix},  \quad    
B=  
\begin{bmatrix}
b_{11} & \cdots & b_{1 k-2}  \\
   \vdots & & \vdots \\
 b_{1 k-2} & \cdots & b_{k-2 k-2}       
\end{bmatrix}.
\]
Thus, $M^{\theta}$ is a symmetric $k\times k$ matrix, obtained from $M$ by replacing the scalar coefficient $a_0$ by the $2\times 2$ matrix $Q^{\theta}$, and repeating the vector $(a_i)_{1\le i\le k-2}$. \\[.5em]
The number of nodes in a given class will be specified as follows. 
For simplicity, we choose the proportions 
vector $\pi$ in \eqref{modsbm} equiproportional and equal to $e_k$ in \eqref{propk}. (As in the case $k=2$, analogous results can be obtained if the proportions are of similar sizes.)
Consider the model defined by
\begin{equation} \label{modk}
\cM_k  = \left\{ P_\te:= P_{e_k,M^\te},\quad \te\in[-1/2,1/2] \right\}\;,
\end{equation}
for $e_k, M^\te$ as in   \eqref{propk}-\eqref{zte}.
This is a $1$--dimensional submodel of the set of all SBMs with at most $k$ classes. For $\te=0$, the matrix $M^0$ again has two identical rows, and the model becomes a SBM with at most $k-1$ classes, with connectivity matrix given by $M$ defined in \eqref{coM}. 
By $E_\te$, we denote the expectation under $P_\te$ in the model $\cM_k$ given by \eqref{modk}.

\begin{thm} \label{thm-kcl}          
Consider a stochastic blockmodel \eqref{modsbm} with $k\ge 2$ classes specified by $\cM_k$ in \eqref{modk}, that is $P_\te=P_{e_k,M^\te}$ with $e_k,M^\te$ given by  \eqref{propk}-\eqref{zte}, for fixed matrices $A, B$ 
with arbitrary coefficients. 
There exists a  constant $c_3=c_3(\rho)>0$, independent of $A,B$,  such 
that,  for all $n\ge 12k$,
\[ \inf_{T} \sup_{\te\in[-1/2,1/2]} E_{\te} \left[ T(X) - \theta \right]^2       
\ge c_3\frac{k}{n}, \]                                            
where the infimum is taken over all estimators $T$ of $\te$ in the model $\cM_k$. 
\end{thm}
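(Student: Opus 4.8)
### Proof proposal

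\textbf{Overall strategy.} The plan is to reduce the $k$-class problem to the two-class Theorem~\ref{thm-twocl} by a ``planting'' argument, exploiting the block structure of $M^\theta$ in \eqref{zte}. The key observation is that the perturbation in $\theta$ lives entirely inside the upper-left $2\times 2$ block $Q^\theta$; the off-diagonal blocks $A$ and the lower-right block $B$ are fixed and $\theta$-independent. So informally, to estimate $\theta$ one only ``needs'' the edges among the vertices carrying labels $1$ and $2$. Those vertices, being roughly $2n/k$ in number under the equiproportional design $e_k$, form an internal Erd\H{o}s--R\'enyi-type subgraph whose parameters are exactly those of the $k=2$ toy model. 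Since there are only $\sim 2n/k$ such vertices, the $1/m$ rate for $m\approx n/k$ vertices gives the claimed $k/n$.

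\textbf{Key steps.} First I would set up a two-point (or multi-point, as in Remark~(a)) testing argument directly, rather than literally conditioning. Fix $\theta_n = c_0\sqrt{k/n}$ (the analogue of $\theta_n=c_0/\sqrt n$ in the $k=2$ case, now with $m\approx n/k$ effective vertices). By Le Cam's two-point method it suffices to show $\|P_{\theta_n}^{\otimes} - P_{-\theta_n}\|_{\mathrm{TV}}$ (or, better, $\chi^2$ or Hellinger between suitable mixtures) is bounded away from $1$; then the minimax risk is $\gtrsim \theta_n^2 \asymp k/n$. Second, I would bound the relevant divergence. Write $P_\theta = \sum_{\vphi} \mu_{e_k}[\vphi]\, P_{\vphi,M^\theta}$ as in \eqref{distrib}. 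For a fixed label map $\vphi$, the product measure $P_{\vphi,M^\theta}$ factorizes over edge types; all factors with at least one endpoint in a class $\ge 3$ are identical for $\theta$ and $-\theta$ (they depend only on $A,B$), so they cancel in the likelihood ratio. Only the edges among $\vphi^{-1}(\{1,2\})$ matter, and on that vertex set $P_{\vphi,M^\theta}$ restricted there is exactly the $k=2$ model $P_{\te,\vphi|_{\{1,2\}}}$ of Section~\ref{sec2} on $m_\vphi := |\vphi^{-1}(\{1,2\})|$ vertices. Third, I would invoke (or re-run) the computation behind Theorem~\ref{thm-twocl}: for the $k=2$ model on $m$ vertices, the two-point $\chi^2$ (or Hellinger) distance between $\theta_n$ and $-\theta_n$ is controlled once $\theta_n^2 m \lesssim 1$, i.e. when $\theta_n \asymp 1/\sqrt m$. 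Under the equiproportional design, $m_\vphi \approx 2n/k$ with overwhelming multinomial probability (for $n\ge 12k$ a Chernoff/Hoeffding bound keeps $m_\vphi$ within a constant factor of $2n/k$), so choosing $\theta_n \asymp \sqrt{k/n}$ makes the per-$\vphi$ divergence uniformly small. Finally I would average over $\vphi$: by convexity of the $\chi^2$-divergence (or via Hellinger affinity and Jensen), the divergence between the mixtures $P_{\pm\theta_n}$ is at most the supremum (resp. average) over $\vphi$ of the per-$\vphi$ divergences, up to handling the small-probability event that $\vphi$ is badly unbalanced --- on that event one bounds trivially by $2$ in TV, contributing $o(1)$. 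Putting the bounds together yields the Le Cam separation and hence the lower bound $c_3 k/n$, with $c_3$ depending only on $\rho$ (here $\rho$ enters through $a_0=1/2$ bounded away from $0,1$ and, if the general-$a_0$ version is used, through the remark in Section~\ref{sec2}).

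\textbf{Main obstacle.} The delicate point is the mixing over the latent labels $\vphi$ in the random-design model: unlike the fixed-design case, $P_{\pm\theta_n}$ are genuine mixtures, and the cancellation of the ``$A,B$ part'' of the likelihood ratio is clean only after conditioning on $\vphi$. One must be careful that the \emph{same} randomness $\vphi$ appears in both $P_{\theta_n}$ and $P_{-\theta_n}$, so that comparing mixtures term-by-term is legitimate; the right tool is the joint coupling through $\mu_{e_k}$ and then convexity of $f$-divergences. A secondary technical nuisance is that $m_\vphi$ is random, so the ``effective sample size'' fluctuates; this is handled by splitting on the balanced event $\{c_1 n/k \le N_j(\vphi)\le c_2 n/k\ \forall j\}$ (whose complement has probability $\le e^{-cn/k} = o(1)$ when $n\ge 12k$) and using the monotonicity of the $k=2$ two-point divergence in $m$ on the good event. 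For the fixed-design statement one restricts to $\vphi\in\Sigma_e$ and the mixing step disappears entirely, so that case is strictly easier and follows a fortiori.
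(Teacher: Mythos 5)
Your overall framing is right --- localize the problem to the $\sim 2n/k$ vertices carrying labels $1$ and $2$, control the size of that set by a Bernstein/Chernoff bound (this is exactly Step~1 of the paper's proof), and then run the $k=2$ computation on $m\asymp n/k$ effective vertices. But the step where you pass from the mixtures to per-$\vphi$ quantities is where the argument breaks. You propose to bound the divergence between the mixtures $P_{\theta_n}$ and $P_{-\theta_n}$ by coupling the two sums term-by-term through $\mu_{e_k}$ and invoking convexity, i.e.\ by $\sup_\vphi d\bigl(P_{\vphi,M^{\theta_n}},P_{\vphi,M^{-\theta_n}}\bigr)$. For a \emph{fixed} $\vphi$ these are two product measures differing by $2\theta_n$ in each of $\binom{m_\vphi}{2}\asymp (n/k)^2$ Bernoulli parameters, so their squared Hellinger/$\chi^2$ distance is of order $m^2\theta_n^2$, not $m\theta_n^2$; it stays bounded only for $\theta_n\lesssim k/n$, which yields the quadratic-risk lower bound $(k/n)^2$ --- the ``observed labels'' rate --- and not the claimed $k/n$. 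The condition $\theta_n^2 m\lesssim 1$ that you quote from Theorem~\ref{thm-twocl} pertains to the \emph{point-versus-mixture} comparison (Erd\H{o}s--R\'enyi vs.\ the average over all $2^m$ labelings), where the Rademacher-chaos second-moment computation exploits cancellation across labelings; that cancellation is exactly what the convexity bound throws away. In short, the hardness of the problem lives in the mixture over the labels of $S_1=\vphi^{-1}(\{1,2\})$, and you must not condition it away.

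The repair is the paper's Step~2: compare $P_{\theta_n}$ to $P_0$ (not to $P_{-\theta_n}$), decompose both as ``mixtures of mixtures'' by conditioning only on $S_2=\vphi^{-1}(\{3,\dots,k\})$ and on $\vphi_{|S_2}$, and use the key structural fact that at $\theta=0$ the first two rows of $M^0$ coincide, so the conditional law does not depend on $\vphi_{|S_1}$ at all and is a single product measure. The inner comparison is then genuinely point-versus-mixture over the $2^{|S_1|}$ labelings of $S_1$, to which Lemma~\ref{lem-tv} and the Rademacher-chaos bound (Lemma~\ref{lemcaos}) apply with $m_1=|S_1|\in[n/k,3n/k]$, giving $\theta_n^2\asymp k/n$ as required. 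Relatedly, your remark that in the fixed-design case ``the mixing step disappears entirely'' is not accurate: the mixture over labelings of $S_1$ is the source of the slow rate there too (the supremum over $\vphi$ in the risk lets the adversary randomize $\vphi_{|S_1}$); what simplifies in fixed design is only that the $\theta=0$ reference measure is a single product rather than a mixture.
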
  
\begin{proof}
  See Section \ref{secprk}.
\end{proof}

\noindent{\bf Fixed design}. A similar result holds for the fixed design case,  assuming that classes, given by the mapping $\vphi$,  are balanced in the following sense. Let  $\Sigma_e$ denote the set of maps $\vphi\in [k]^n$ such that, for some  constants $c_1,c_2$, for any $1\le j\le k$, 
\[ c_1\frac{n}{k} \le |\vphi^{-1}(j)| \le c_2\frac{n}{k}.\]
The set $\Sigma_e$ thus consists of those maps $\vphi$ that produce $k$ classes all of size of order $n/k$. Then the conclusion of Theorem \ref{thm-kcl} still holds, provided $E_\theta$ is replaced by $E_{\te,\vphi}$, and the supremum taken over $\theta\in[-1/2,1/2]$ and $\vphi\in\Sigma_e$ as defined just above.

\subsection{Some comments on the results}
\label{sec2:comments}

Theorem \ref{thm-kcl} establishes that the minimax estimation rate of $\te$ in  model \eqref{modk} is at best of the order $k/n$, uniformly over $k$ and $n$. An intuitive explanation for this particularly slow rate is as follows: the phenomenon observed for $k=2$ is still present but this time the part of the matrix $Z^\te$ containing information about $\te$ 
is smaller, as only of the order $2/k$ of the nodes will be assigned to classes $1$ or $2$, which are the elements of the connectivity matrix that 
depend on $\te$. 

An important point is that this lower bound is minimax {\em local} (as opposed to more commonly proved minimax global results) that is, not only does this slow rate occur around one specific least-favorable point in the 
parameter space, it does occur around {\em any} point. More precisely:
If we start with any $k\ge 2$, any proportions vector, and any
connectivity matrix $M$ as in \eqref{coM} with $k-1$ classes, there
exists at least one submodel around $M$, namely $\cM_k$ in
\eqref{modk}, such that estimation of a connectivity parameter in $M$
cannot be faster than $k/n$.
In Theorem \ref{thm-twocl}, the model given by $\te=0$ is an
Erd\"os-Renyi graph, which raises the question whether the slow rate
in Theorem \ref{thm-twocl} is a consequence of the distinguished
properties of the Erd\"os-Renyi model. This is not the case. Proving 
such a local bound makes the proof of Theorem \ref{thm-kcl} more involved 
in the random design case, as one has to quantify the $L^1$-distance between two mixtures of probability measures, instead of between one fixed measure and a mixture as is often the case in proving minimax global bounds.

It is interesting to compare the rate in Theorem \ref{thm-kcl} to the one 
that would be obtained if the labels were observed. If $k$ is fixed, Lemma 2 in \citet{bickel13}  
gives a quadratic rate of order $1/n^2$ for connectivity parameters when labels are observed. This result can be easily adapted to the case where $k$ possibly grows with $n$, say in an asymptotic setting with $n\to\infty$ and $k/n\to 0$, leading to a quadratic rate of order $(k/n)^2$. The {\em uniform} rate in Theorem \ref{thm-kcl} is the square-root of this rate and thus much 
slower.

\section{Further results: upper bounds and smooth graphons} \label{sec:ub}

In this Section, we complement our main results by upper bounds (under some conditions when $k\ge 3$) and results for certain smooth graphons, which can be seen as a continuous analogue of the results for the SBM parameter $\theta$.\\


We establish upper bounds that show that the lower bounds in the previous section can be matched for certain subsets of connectivity matrices. In the case of $k\ge 3$ classes, the conditions are  arguably somewhat restrictive and can probably be improved. However, since the lower bounds are proved to be local around any possible SBM containing two classes that are close, the rate, if not matched, can only become worse. As we show below, some conditions are in fact necessary. Indeed, we give  an example in Section \ref{subsec:nec} where the rate drops further, illustrating the difficulty of the estimation problem.  
 
\subsection{Upper bounds via maximum likelihood}
\label{subsec-mle}

Theorems \ref{thm-twocl} and \ref{thm-kcl} provide
lower bounds. There are  corresponding, matching upper-bound, which we
obtain next.\\[.5em]
{\bf The case ${k=2}$}. We define an estimator of $\theta$ as follows. For any $\sigma$ an element of $[2]^n$, i.e.\ for any mapping $\{1,\ldots,n\}\to\{1,2\}$, define
\begin{align} 
 2Z_n(\sigma,X) & := - \sum_{i<j,\ \sigma(i)=\sigma(j)} (1-2X_{ij}) +
\sum_{i<j,\ \sigma(i)\neq \sigma(j)} (1-2X_{ij}). \label{zedns}
\end{align}
Maximising \eqref{zedns} in $\sigma$ leads to set 
\[ \hat \sigma  = \underset{\sigma\in [2]^n}{\text{argmax }} |Z_n(\sigma,X)| \]
which leads to the profile maximum likelihood estimate
\begin{align}
\hat \te & = \frac{Z_n(\hat\sigma,X)}{b_n}\qquad\text{ and }\qquad  b_n 
 ={n\choose 2} = \frac{n(n-1)}{2}  \label{pseml}.
\end{align}
This estimator 
can be seen as  a (pseudo)-maximum likelihood estimate, see Appendix \ref{sec-ub}.   

\begin{thm} \label{thmub}
Consider a stochastic blockmodel \eqref{modsbm} with $k=2$ specified by 
$\cM$, that is, $P_\te=P_{e,Q^\te}$ with $e,Q^\te$ given by  \eqref{prop2}-\eqref{simat}. Let $\smash{\hat\te}$ be the estimator defined by \eqref{pseml}. There exists a constant $C_1>0$  such that for all $n\ge 2$, 
\[ \sup_{\te\in [-1/2,1/2]} E_{\te}\left[ \hat\te - \te \right]^2 
\le \frac{C_1}{n}.
\]
The same risk bound holds for $\hat\te$ in the fixed design model, uniformly over $\te$ and $\vphi\in [2]^n$.
\end{thm}    
\begin{proof}
  See Appendix \ref{sec-ub:2}.
\end{proof}
The main takeaway from this result is that the {\em uniform} quadratic rate for estimating the connectivity parameter along the submodel $\cM$  is exactly of order $n^{-1}$, up to constants. That follows from combining 
Theorems \ref{thm-twocl} and \ref{thmub}. 
This `slow' rate (as compared to the asymptotic pointwise quadratic rate $n^{-2}$ 
of \eqref{asyn}) arises even if all other parameters---here, the vector of proportions $\pi$---are assumed known. 
The submodel built for ${k=2}$ can be regarded as a local perturbation of an Erd\"os-Renyi graph model with connection probability $1/2$. The drop in the rate is already noteworthy, as the rate of estimation of $p$ for a ER$(p)$ model is of the order $n^{-2}$. \\[.5em]
{\bf The case ${k\geq 2}$}.
For this case, we make additional (but fairly mild) assumptions on the matrix $M$. These conditions are for simplicity of presentation and could,  in some cases, be improved. Our main purpose here is to show that, for `typical' matrices $A$ and $B$ in \eqref{zte}, the rate of estimation of $\theta$ in \eqref{zte} is indeed exactly of the order $k/n$. In Section \ref{subsec:nec} below, we show that at least {\em some} conditions on possible matrices $A,B$ are necessary: for certain unfavourable matrices, the rate drops below $k/n$.
As was the case for Theorem \ref{thm-twocl}, the result of Theorem \ref{thmub}
remains unchanged if the constant $1/2$ in $Q^\te$ is replaced by 
any $a_0\in(0,1)$.

We modify the criterion function \eqref{zedns} by restricting it to a given subset $S\subset\{1,2,\ldots,n\}$ of indices,
\begin{equation} \label{znew}
 2Z_n(\sigma,S,X)=- \sum_{i<j,\ i,j\in S,\, \sigma(i)=\sigma(j)} (1-2X_{ij}) +
\sum_{i<j,\ i,j\in S,\, \sigma(i)\neq \sigma(j)} (1-2X_{ij}). 
\end{equation}
To avoid technicalities, we maximize over a grid, which constitutes no loss of generality. To this end, 
define the regular grid $\Theta_n=\{i/(2n^2),\ i= -n^2,\ldots,n^2\}$ in $\Theta=[-1/2,1/2]$, and 
\begin{align} 
 (\tilde\sigma,\tilde \te)\ & :=\ \underset{\sigma\in\Sigma_e,\, \te\in\Theta_n}{\text{argmin }} 
\ \sum_{i<j} (X_{ij} - Z^\te_{\sigma(i)\sigma(j)})^2 \label{ub-pre} \\ 
\tilde{S}_I\ & := \ \tilde\sigma^{-1}(\{1,2\}). \label{sun-hat}
\end{align}
Equation \eqref{ub-pre} defines a global maximum-likelihood type estimator, which is then used to obtain an estimate $\tilde S_I$ of the set of nodes labelled $1$ or $2$. Given this estimate, one can apply the profile-type method already used in the case ${k=2}$:
For $\tilde S_I$ as in \eqref{sun-hat}, $\tilde n_k= {|\tilde S_I| \choose 2}$,  and $Z_n$ as in \eqref{znew}, set
\begin{align} 
\hat \sigma_I & = \underset{\sigma\in\Sigma_e}{\text{argmax }} |Z_n(\sigma,\tilde S_I,X)| \label{sigi} \\
\hat \te & = \frac{Z_n(\hat\sigma_I, \tilde S_I,X)}{\tilde n_k}. \label{thgen}
\end{align} 
We require the coefficient $a_0$ of the matrix $M$ in \eqref{coM} to be sufficiently distinct from the remaining entries: Let $\cC=\{a_{i}, b_{ij}, 1\le i,j\le k-2\}$ be the set of coefficients of the matrices $A$ and 
$B$ in \eqref{zte}, with $a_0=1/2$,
\begin{equation} \label{techm}
\min_{c\in\cC} \left\{ |c-a_0| \right\} \ge 2\kappa >0.
\end{equation} 
\vspace{-.1cm}

\begin{thm} \label{ubk} 
Consider a stochastic blockmodel \eqref{modsbm} with $k\ge 2$ classes specified by  $P_\te=P_{e_k,M^\te}$ with $e_k$ and $M^\te$ given by  \eqref{propk}-\eqref{zte}, for fixed matrices $A$ and $B$. Define $\smash{\hat\te}=\smash{\hat\te}(X)$ as in \eqref{thgen}. Suppose  \eqref{techm} holds and that, for some small enough $d$ and $\kappa$ as in \eqref{techm},
\begin{equation} \label{techc}
k^3 \log{k}\le d \kappa^4 n.
\end{equation}
Then there exists a universal constant $C_1>0$ such that for $n\ge 5$,
\[ \sup_{|\te|\le \kappa} E_{\te}\left[\hat\te - \te\right]^2 
\le C_1\frac{k}{n}.\]
The same risk bound holds for $\hat\te$ in the fixed design model, uniformly over $|\te|\le \kappa, \vphi\in \Sigma_e$.
\end{thm}
\begin{proof}
  See Appendix \ref{sec-ub:k}.
\end{proof}
Note $\kappa$ in \eqref{techc} may depend on $k$ and $n$, and may go
to zero in a framework where $k,n$ go to infinity. Below are two examples for the behaviour of $\kappa$. These examples illustrate that our conditions are indeed met in commonly encountered settings, in particular, with high probability, if $M$ is a random matrix and $k$ does not grow too rapidly with $n$.\\

\noi {\em Example 1 (well-separated block).} If $\kappa$ is a fixed positive constant e.g. $1/4$, then the submatrix $Q^\te$ is well separated from the other coefficients of the matrix $M$. The procedure above then correctly picks up a sensible approximation of the true set $\sigma^{-1}(\{1,2\})$ via $\tilde S_I$ and the rate $k/n$ is achieved, as long as $k$ does not grow faster than $n^{1/3}/\log{n}$, an already fairly important number of classes.
\\

\noi {\em Example 2 (randomly sampled matrix $M$).} Suppose that the symmetric matrix ${M=:(c_{ij})}$ in \eqref{coM} is a random matrix whose upper triangular entries are drawn i.i.d. with uniform distribution $\cU[0,1]$, except $c_{11}=1/2$. The distribution of $|c_{ij}-1/2|$ except for 
$i=j=1$ is then $\cU[0,1/2]$, and it is a standard fact that the first order statistic of a uniformly distributed sample of size $N$ is Beta$(1,N)$ distributed. That implies the random variable $2\min_{c_{ij}\in\cC} |c_{ij}-1/2|$ has law $\text{Beta}(1,k(k-1)/2-1)$. Therefore, $\kappa$ in \eqref{techm} is of order no less than $1/k^2$ with high probability. From \eqref{techc} one deduces that for $k$ of the form $n^\delta$ with 
$\delta<1/11$ and $n$ large enough, the rate $k/n$ is achieved uniformly and locally, for typical matrices $M$. 
Inspection of the proof of Theorem \ref{ubk} reveals that $k=o(n^\delta)$ with $\delta<1/7$ in fact suffices for the rate $k/n$ to be attained with high probability when $M$ is random: this is achieved by distinguishing  $c_{ij}$ of the types $a_i$ or $b_{ij}$ in the proof and noting that the minimum of $|a_{i}-1/2|$ over $i$ will be of larger order $k^{-1}$, instead of $k^{-2}$ for the minimum over $i,j$ of $|b_{ij}-1/2|$.\\

\begin{remark}[conditions $|\theta|\le \kappa$ and \eqref{techm}]  \label{remcondsep}
We slightly restrict the range of $\theta$ in the upper bound of Theorem \ref{ubk}. Formally, the matching upper bound is obtained for a somewhat smaller interval than $[-1/2,1/2]$  when $k\ge 3$. (If $k$ is fixed and $n\to\infty$, the restriction is only to $[-\delta,\delta]$ for a small enough constant $\delta>0$.)  
The condition is needed to ensure, in combination with \eqref{techm}, that the block $Q^\theta$ in the matrix \eqref{zte} is separated sufficiently from the other submatrices $A$ and $B$. If this is not the case, the estimation problem can become more difficult, and the rate hence slower. This is formally shown in Section \ref{subsec:nec}, where the extreme case of all coefficients of $A,B$ being equal to $1/2$ is discussed. This phenomenon can also occur if only some parts of $A$ and $B$ are close to $1/2$, or to either $1/2+\theta$ or $1/2-\theta$ for some $\theta\in(0,1/2)$. 

We do not claim that the restriction to $[-\kappa,\kappa]$  and \eqref{techm} are sharp conditions; they can probably be improved. However, the argument above shows some condition of this form is needed, although it may vary depending on the estimation procedure considered: for spectral estimators as  considered in Appendix A, for example, we need a similar separation assumption, although it takes a slightly different form (see Theorem 7 in Apprendix A and the comments below it). We also note that small values of $\theta$ are conceptually the most interesting case, since the $2\times 2$ subproblem becomes easier the larger $\theta$ becomes. 

\end{remark} 

\subsection{Upper bounds via spectral estimates}
\label{sec:spec:2}

Since the maximum likelihood estimator \eqref{ub-pre}
has to optimize over the set $[k]^n$, it need not be computable in
polynomial time.
It hence seems natural to ask whether there is a ``computational
gap'', that is, whether the best estimator computable in polynomial
time converges at a slower rate than predicted by the minimax bound. We do not have
a complete answer to his question, but for a somewhat restricted model
class, no such gap exists:
The estimator described below for the case ${k=2}$ uses a spectral method, see e.g. \cite{Lei:Rinaldo:2015:1}.
A generalization to ${k\geq 3}$ classes is discussed in
Appendix \ref{sec:spec:k}, which requires further conditions.
Within the remit of these conditions, however, the minimax rate is
achievable in polynomial time. An extension to sparse graphs is considered in 
 Appendix \ref{sec_spa}. A small simulation study in Section \ref{sec:simulation} illustrates the behaviour of the estimator.

With the convention that $X_{ii}=1/2$ and $X_{ji}=X_{ij}$, define the 
$n\times n$ matrix $\Delta$ by
\[ 
\Delta := X - \frac12 J, \quad\text{ where }\quad
J := \bigl(1\bigr)_{i,j\leq n}. 
\]
Let $\la_{1}^a(\Delta)$ denote the largest eigenvalue in absolute value of $\Delta$ and set 
\begin{equation} \label{te_spec} 
\tilde\te := \frac{\la_{1}^a(\Delta)}{n-1}\;. 
\end{equation} 
We refer to this procedure as spectral algorithm for $k=2$ and denote it $\cS_2$.
The intuition behind this estimator in the fixed design setting is the following. For $i\neq j$, we have
\[ E[X_{ij} - \frac12] = M_{\vphi(i)\vphi(j)}^\theta - \frac12
=(-1)^{\1_{\vphi(i)\neq \vphi(j)}}\theta. \]
Set ${v=((-1)^{\mathds{1}\lbrace \vphi(i)=1\rbrace})_{i\leq n}}$ and $V :=  vv^t=\bigl((-1)^{\mathds{1}\lbrace \vphi(i)\neq\vphi(j)\rbrace}\bigr)_{i,j\leq n}$.
Then for non-random $\vphi$,
\[ E[\Delta] = \theta(V-I_n),\]
where $I_n$ is the identity matrix of size $n$. As $E[\Delta]$ is a
rank $1$ matrix whose non-zero eigenvalue equals $(n-1)\te$ (with $v$
the corresponding eigenvector), this leads us to introduce $\tilde\te$
as in \eqref{te_spec}.
\begin{thm} \label{thm_ubspec}
In the same setting as in Theorem \ref{thmub}, let $\tilde\te$ be the estimator defined by \eqref{te_spec}. There exists a constant $C>0$  such that for all $n\ge 2$, 
\[ \sup_{\te\in [-1/2,1/2]} E_{\te}\left[ \tilde\te - \te \right]^2 
\le \frac{C}{n}.
\]
The same risk bound holds for $\hat\te$ in the fixed design model, uniformly over $\te$ and $\vphi\in [2]^n$.
\end{thm}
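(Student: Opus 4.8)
\emph{Proof proposal.} The plan is to compare $\tilde\te$ with the population quantity $(n-1)\te$, which is the eigenvalue of largest modulus of $E_{\te,\vphi}[\Delta]=\te(V-I_n)$, by controlling the perturbation $W:=\Delta-E_{\te,\vphi}[\Delta]$ in spectral norm through an $\veps$-net argument in the spirit of \cite{Lei:Rinaldo:2015:1}. I would first establish the bound in the fixed-design model, uniformly over $\te\in[-1/2,1/2]$ and $\vphi\in[2]^n$, and then deduce the random-design bound by conditioning on $\vphi$.

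\emph{Step 1 (deterministic perturbation bound).} In the fixed-design model set $B:=E_{\te,\vphi}[\Delta]=\te(V-I_n)$. As noted in the text, $v/\sqrt n$ is an eigenvector of $B$ with eigenvalue $(n-1)\te$ and $v^{\perp}$ is an eigenspace with eigenvalue $-\te$; hence for $n\ge 2$ the eigenvalue of $B$ of largest modulus equals $(n-1)\te$ and $\|B\|_{Sp}=(n-1)|\te|$. Writing $\Delta=B+W$, Weyl's inequality bounds the gap between each ordered eigenvalue of $\Delta$ and the corresponding one of $B$ by $\|W\|_{Sp}$. A short case split then gives, for $n\ge 3$,
\[
|\la_1^a(\Delta)-(n-1)\te|\ \le\ 9\,\|W\|_{Sp}\,,
\qquad\text{hence}\qquad
|\tilde\te-\te|\ \le\ \frac{9\,\|W\|_{Sp}}{\,n-1\,}\,,
\]
the two regimes being: (i) if $(n-2)|\te|>2\|W\|_{Sp}$, the ``signal'' eigenvalue of $\Delta$ is the extreme one on the side of $\te$, is well separated from the remaining eigenvalues and carries the sign of $\te$, so $\la_1^a(\Delta)$ equals it and Weyl applies directly; (ii) if $(n-2)|\te|\le 2\|W\|_{Sp}$, then both $(n-1)|\te|$ and $|\la_1^a(\Delta)|=\|\Delta\|_{Sp}\le(n-1)|\te|+\|W\|_{Sp}$ are $O(\|W\|_{Sp})$, and the bound follows by the triangle inequality.

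\emph{Step 2 (concentration and assembly).} Here $W$ is symmetric with zero diagonal and with independent entries $W_{ij}=X_{ij}-M^{\te}_{\vphi(i)\vphi(j)}$ ($i<j$) that are centered, bounded by $1$, and of variance at most $1/4$; the means cancel, so the tail of $\|W\|_{Sp}$ does not depend on $\te$ or $\vphi$. Taking a $1/4$-net $\cN$ of the unit sphere of $\R^{n}$ with $|\cN|\le 12^{n}$, one has $\|W\|_{Sp}\le 2\max_{u,v\in\cN}u^{t}Wv$, and for fixed unit $u,v$ the quantity $u^{t}Wv=\sum_{i<j}W_{ij}(u_iv_j+u_jv_i)$ is a sum of independent centered variables of total squared range $O(1)$, so Hoeffding's inequality and a union bound produce an absolute $C_{0}$ with $P_{\te,\vphi}(\|W\|_{Sp}>C_{0}\sqrt n)\le e^{-n}$ (equivalently, one quotes the corresponding lemma from the appendix). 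Combining this with the deterministic bound $|\tilde\te-\te|\le|\tilde\te|+\tfrac12\le\tfrac12\sqrt{n/(n-1)}+\tfrac12<2$ (which follows from $|\tilde\te|\le\|\Delta\|_{Sp}/(n-1)\le\|\Delta\|_{F}/(n-1)$) and splitting the expectation over $\{\|W\|_{Sp}\le C_{0}\sqrt n\}$ and its complement,
\[
E_{\te,\vphi}\big[(\tilde\te-\te)^{2}\big]\ \le\ \frac{81\,C_{0}^{2}\,n}{(n-1)^{2}}\ +\ 4\,e^{-n}\ \le\ \frac{C}{n}\,,
\]
for $n\ge 3$; the case $n=2$ is absorbed into $C$ since $(\tilde\te-\te)^2<4$. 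Finally, in the random-design model the conditional law of $X$ given $\vphi$ under $P_{\te}$ is exactly $P_{\te,\vphi}$, so $E_{\te}[(\tilde\te-\te)^{2}]=E_{\te}\big[E_{\te,\vphi}[(\tilde\te-\te)^{2}]\big]\le\sup_{\vphi\in[2]^{n}}E_{\te,\vphi}[(\tilde\te-\te)^{2}]\le C/n$, which is the claimed bound.

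\emph{Main obstacle.} The delicate point is Step 1 in the regime $|\te|\lesssim 1/n$: there the signal eigenvalue $(n-1)\te$ of $B$ is not separated from the bulk eigenvalue $-\te$, so a priori $\la_1^a(\Delta)$ need not track $\la_1(\Delta)$ nor carry the sign of $\te$, and one cannot simply invoke Weyl at a single index. The case split above sidesteps this by observing that precisely in that regime both the target $(n-1)\te$ and the estimator $\la_1^a(\Delta)$ are of order $\|W\|_{Sp}$, so the bound holds for trivial reasons; the complementary, genuinely informative regime is then handled by routine eigenvalue perturbation together with the standard $\veps$-net control of $\|W\|_{Sp}$.
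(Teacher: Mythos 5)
Your proposal is correct and follows essentially the same route as the paper: a Weyl-type perturbation bound around the rank-one mean matrix $E[\Delta]=\te(V-I_n)$, a case split according to whether $(n-1)|\te|$ exceeds a multiple of $\|W\|_{Sp}$ (so that $\la_1^a(\Delta)$ either tracks the signal eigenvalue or is trivially small along with the target), a spectral-norm concentration bound of order $\sqrt{n}$, and conditioning on $\vphi$ for the random design. The only difference is cosmetic: you prove the concentration of $\|W\|_{Sp}$ by an elementary $\veps$-net plus Hoeffding argument, whereas the paper cites Theorem 5.2 of Lei and Rinaldo (which also covers the sparse rate $\sqrt{n\al_n}$ needed for Theorem \ref{thm-twocl-spa}); your write-up is in fact more explicit about assembling the final bound in expectation.
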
    
\begin{proof}
  This follows as a special case of Theorem \ref{thm-twocl-spa}, in
  Appendix \ref{sec_spa}.
\end{proof}

\subsection{Necessity of conditions on $M$} \label{subsec:nec}

What precedes shows that the rate $k/n$ is achieved under conditions on  $M$ in \eqref{coM} and/or $k$. In general, we expect the rate to depend on the matrices $M$. Although we do not investigate this point in full here, we discuss it briefly.

The estimation methods investigated in Section \ref{subsec-mle} (MLE) and Appendix \ref{sec:spec:k} (spectral method) require the upper-left ${2\times 2}$ block of $M^\te$ to be  sufficiently  separated from at least part of the other entries of $M^\te$. Among those matrices $M^\te$ whose upper-left corner equals $Q^\te$, a worst case scenario should correspond to a matrix whose coefficients in 
$A$ and $B$ all equal $1/2$. This leads to the matrix 
\begin{equation} \label{special}
 \check M^\theta   
= \begin{bmatrix}
     \frac12 + \theta & \frac12 - \theta  &  \frac12 &  \cdots & \frac12\\
     \frac12 - \theta  & \frac12 + \theta  & \frac12  &  \cdots & \frac12\\
     \frac12   &  \frac12   & \frac12   & \cdots & \frac12  \\
\vdots &  \vdots & \vdots & & \vdots \\
\frac12 & \frac12 & \frac12 & \cdots & \frac12    
\end{bmatrix},
\end{equation}
which is of course heavily over-specified from the SBM perspective.
Consider the  SBM  in a fixed design case, where $\vphi:\{1,\ldots,n\}\to 
\{1,\ldots,k \}$ is unobserved. Suppose all classes $\sigma^{-1}(i)$ are of cardinality of order $n/k$, and the  connectivity matrix is given by \eqref{special}. This specific model can be regarded as a special case of the setting considered from a testing perspective by \citet{butuceaingster} and \citet{ariascv14}.  From Theorem 4.3 of \cite{butuceaingster}, one 
can deduce that the minimax rate for the quadratic risk when estimating $\te$ is no better than $\rho_n=\min\left(\frac{k^2}{n},\sqrt{\frac{k\log k}{n}}\right)$, for $k,n\to \infty$ and $\rho_n=o(1)$. The rate is therefore no better than $k^2/n$ for $k\le n^{1/3}$, and remains much slower than $k/n$ even for $k>n^{1/3}$.

\subsection{Minimax rates for a class of functionals of smooth graphons}\label{sec4}

Stochastic block models can be identified with piecewise constant graphons; we now consider the case where
the graphon is a smooth function instead. 
Let $w:[0,1]^2\to [0,1]$
a measurable function, let $\left< w\right>$ be its graphon equivalence class, and denote by
${P_{\left< w\right>}=P_w}$ the distribution of data $X$ generated 
by the graphon model \eqref{modgr}. 
Consider the problem of estimating the functional
\begin{equation}\label{sig}
 \ta(\left< w\right>)=\left[\int_{[0,1]^2} \Big(w(x,y)-\int_{[0,1]^2}w\Big)^2 dxdy\right]^{1/2},
\end{equation} 
for any representer $w$ of $\left< w\right>$. 
This is well defined in terms of the graphon, as the integral is invariant under any simultaneous (Lebesgue-)measure-preserving transformation of $x$ and $y$.

The statistic \eqref{sig} can be interpreted as a `graphon-standard deviation'. 
Its estimation under a smooth graphon model is, in a sense, analogous to the problem of estimating the functional $\te$ in the simple SBM with two 
classes discussed in Section \ref{sec2}: Let $h_\te$ be the piece-wise constant graphon characterizing the SBM defined by \eqref{prop2}--\eqref{simat}. 
Since $\ta(\psg h_\te \psd)=|\te|$, estimating $\te$ is then indeed equivalent (for positive values) to estimation of $\ta(\left< h_\theta \right>)$. 

Under a 2-class SBM, the results of Section \ref{sec2} show $\te$ in \eqref{simat} cannot be estimated faster than $c/n$. It is natural to ask whether the same still holds if one works with `smoother' graphons instead of histograms (where we refer to $\psg w \psd$ as smooth if at least one of its representers is a smooth function). The following result addresses this question for a simple class of smooth graphons, both for $\ta(\cdot)$ and for a larger class of functionals containing $\ta(\cdot)$.

Let $\cP_B$ be the collection of all graphons that admit a representer which is a polynomial in $x, y$, with degree bounded by some integer $D\ge 2$ and coefficients bounded by an arbitrary constant $M>0$ (this boundedness restriction is only to ensure a --nearly, up to a log term-- matching 
upper-bound in the next result).  For any $0\le \theta\le 1$, let us denote by $w_\te$ the function from $[0,1]^2$ to $[0,1]$ given by
\begin{equation} \label{polyw}
 w_\theta(x,y)=\frac12-\theta(x-\frac12)(y-\frac12)
\end{equation} 
and let $w_0$ denote the  constant function equal to $1/2$. The function $w_\theta$ can be interpreted as a `smooth' counterpart to the histogram graphon underlying the SBM \eqref{simat}.

\begin{thm} \label{thm-quad}
Let $X$ be data from the graphon model \eqref{modgr}. Let
$\ta(\cdot)$ be defined as in \eqref{sig}. There exist constants $c_1,c_2>0$ such that 
\[
 \frac{c_1}{n}\le  \inf_{\hat \ta} \sup_{w \in \cP_B} E_{P_{w}}\left[\hat{\ta}(X) - \ta(\left< w\right>)\right]^2 \le \frac{c_2\log{n}}{n}, 
\]
where the infimum is taken over all possible estimators of $\ta(\psg w\psd)$ in model \eqref{modgr}. 
Let $\psi$ be an arbitrary functional defined on graphon equivalence classes satisfying 
\begin{equation} \label{condfun}
 |\psi(\psg w_\te \psd)-\psi(\psg w_0\psd)| \ge c|\te|
\end{equation}  
for some $c>0$, for any $0\le \te\le 1$, and for the function $w_\te$ in \eqref{polyw}, 
Then for some $d>0$,
\[ \inf_{\hat \psi} \sup_{w \in \cP_B} E_{P_{w}}\left[\hat{\psi}(X) - \psi(\left< w\right>)\right]^2 \ge \frac{d}{n}.
\]
\end{thm}
\begin{proof} See Section \ref{sec:lbg}.
\end{proof}

The first part of Theorem \ref{thm-quad} asserts that the quadratic minimax rate for estimating \eqref{sig} cannot be faster than $c/n$, even if one 
 restricts the parameter set to a small class of smooth graphons $w$, namely graphons with a polynomial representer of bounded degree. This class can be seen as a smooth analogue of the histogram graphon underlying model \eqref{modk2}, or more generally the model with $k$ classes and connectivity \eqref{zte}. The degree of the polynomial can be seen as the analog of $k$. The rate of order $1/n$ is obtained because the degree of the polynomial is assumed bounded. Although we do not investigate this further here, one may conjecture that the rate would slow even further for a larger class (e.g. growing degree of polynomials, or a nonparametric class such as a  H\"older ball). 
 
The second part of Theorem \ref{thm-quad} indicates that the specific form of the functional $\ta(\cdot)$ in \eqref{sig} is not essential for the lower bound to hold. A given functional $\psi(\cdot)$ leads to a rate at least as slow of $\ta(\cdot)$ over the considered class of graphons as soon as \eqref{condfun} holds. This condition intuitively means that the functional $\psi(\cdot)$ is at least as hard as to estimate as the functional $\ta(\cdot)$, for which the difference on the left hand-side of \eqref{condfun} indeed behaves like $|\theta|$.  By direct computation we see that an example of such a graphon functional is
\[ \psi(P_w) =  \int_{[0,1]^2} \Big|f(x,y) - \int_{[0,1]^2} |f(x,y)|dxdy \Big|dxdy.\]
Providing a unified theory with matching lower and upper bounds for graphon functionals is an interesting topic for future research.

\section{Discussion}  \label{secdisc}

\citet{chaogr15} show that, if one estimates the parameter function $w$ of a graphon model, not observing the vertex labels---in this case, the variables $U_i$ in 
\eqref{modgr}---does (in general) impact on the optimal rate. 
In the present paper,  we have considered uniform estimation of certain functionals of graphon models  (in particular, the loss function is quite different from theirs). 
For estimation of certain random graph functionals---including the connectivity parameters  considered by \citet{bickel13}---we have shown that
the uniform, minimax rate does depend on whether the labels are observed, 
i.e.\ the phenomenon described by
\cite{chaogr15} persists even if one does not try to recover the entire function $w$, but only a specific $1$--dimensional aspect of $w$. The fast quadratic rate  $1/n^2$ is not achievable uniformly. If the number $k$ of classes is known and fixed, the quadratic rate becomes $1/n$. If the number of classes $k$ grows with $n$, the rate drops 
to $k/n$. We have used some mild assumptions on the part of the connectivity matrix other than the $2\times 2$ submodel. If those assumptions are not satisfied, the rate may even drop further. Similar results also hold for sparse graphs.

Interestingly, for the functionals considered here, the uniform rate is always, regardless of the number of classes $k$, much below the rate in the case where labels would be observed. This is in contrast with the problem of recovery of the mean adjacency matrix considered in \cite{chaogr15}, where for $k$ is larger than $\sqrt{n\log{n}}$, the (non--normalised) rate $k^2+n\log{k}$ is dominated by the `parametric' rate $k^2$, the rate if labels are observed. 


We claim no novelty regarding the algorithms---the MLE and spectral method---which we have adapted from existing work to the problem at hand. Their purpose is to verify that the lower bound is tight (both algorithms achieve it) under some mild conditions, and that there is no computational gap (the spectral method does so in polynomial time). 
Yet, we are not aware of other work providing uniform rates for SBM connectivity parameters for these or other algorithms, which constitutes another novelty of the paper.


Aspects of our proofs  reflect the fact that graphon models constitute a specific type of mixture model,
and estimation in mixtures can be difficult if mixture components are hard to distinguish; although no general theory of these phenomena seems to exist, we refer to the early work on estimation in finite mixture models by \cite{Hartigan83} and \cite{bickelchernoff}, and e.g.$\,$ to \cite{kahn} and \cite{gmc16} for more recent results.

\section{Proofs of the lower bounds in SBMs} \label{secpr}

The proofs of Theorems \ref{thm-twocl} and \ref{thm-kcl} rely on variations of Le Cam's `two-points' method, which bounds the minimax risk from below by a quantity involving the $L_1$ distance between a 
distribution and a finite mixture. (Specifically, this is the `point versus mixture' variant of the two points method, see e.g. \cite{binyu}.) 
This and other relevant technical lemmas are recalled in Section  \ref{sec:lemmas} below; the two points method is 
Lemma \ref{lem-lc}. For Theorem \ref{thm-kcl}, for $k\geq 2$ classes, one main idea is to `isolate' the part corresponding to the submatrix $Q^\theta$. More details comments are given along the proof in Section \ref{secprk} below.\\

{\em Notation.} Recall that a SBM with $k$ classes, proportions vector $\pi$ and connectivity matrix $M$ has distribution $P_{\pi,M}$ as given in \eqref{distrib}.
For a $n\times n$ symmetric matrix $A$ with zero diagonal, we write
\[ P_A = \bigotimes_{i<j} \text{Be}(A_{i,j}). \]
If $A$ is only given by $A_{i,j}$ for $i<j$, one extends it by symmetry and sets $A_{i,i}=0$.
The distribution of a SBM in the fixed design case with given $k,M$ and labelling function $\vphi$  is hence $P_{A^\vphi}$, where $A^\vphi_{i,j}=A^\vphi(M)_{i,j}=M_{\vphi(i)\vphi(j)}$. In the random design case, 
if $\pi$ is the vector with equal proportions $e_k=[k^{-1},\ldots,k^{-1}]$, then from \eqref{distrib}, 
\[ P_{\pi,M}=P_{e_k,M}= \frac{1}{k^n} \sum_{\vphi\in [k]^n} P_{A^\vphi}\qquad\text{ where }\quad
A^\vphi_{i,j}=M_{\vphi(i)\vphi(j)}. \]
We generically denote universal constants by $C$, where the
value may change from line to line. \\

\subsection{Two classes} \label{secpr2}

\begin{proof}[Proof of Theorem \ref{thm-twocl}] \label{pr-thm-twocl}
Let $N=2^n$ and let $A_1,\ldots,A_N$ be the collection of symmetric $n\times n$ matrices with general term 
$a_{ij}(\vphi)=a_{ij}(\te,\vphi)=
Q_{\vphi(i)\vphi(j)}^\theta$, $i<j$ and zero diagonal, for all possible $\vphi\in [2]^n$ and some $\te\in\Theta$. Let $A_0$ be the $n\times n$ matrix with all elements equal to $1/2$ on the off-diagonal, that is the matrix with $\te=0$. By Lemma \ref{lem-lc}, applied with $\ta=0$ and $\theta=\theta_n$  small to be chosen below, in order to get a lower bound 
for the minimax risk, it is enough to bound the $L^1$-distance $\|\mathbb 
P -\mathbb Q\|_1$ between 
\[ \mathbb P = P_{A_0},\quad \mathbb Q = \frac1N \sum_{k=1}^N P_{A_k}. \] 
If $\la_1=\text{Be}(q)$, $\la_2=\text{Be}(r)$, $\mu=\text{Be}(s)$, a simple computation leads to
\[ \int (\frac{d\la_1}{d\mu}-1)(\frac{d\la_2}{d\mu}-1)d\mu 
= \frac{(q-s)(r-s)}{s(1-s)}. \]
By Lemma \ref{lem-tv} applied to $\mathbb P$ and $\mathbb Q$, where $\ta_{i,j}(\vphi,\psi)= (2a_{ij}(\vphi) - 1)(2a_{ij}(\psi) - 1)$,
\begin{align*}
\| \mathbb P - \mathbb Q\|_1^2 
& \le \frac{1}{4^n} \sum_{\vphi,\psi\in [2]^n} 
\prod_{i<j} (1+\ta_{i,j}(\vphi,\psi)) - 1 \\
& \le \left[\frac{1}{4^n} \sum_{\vphi,\psi\in [2]^n} 
e^{\sum_{i<j} \ta_{i,j}(\vphi,\psi)}\right] - 1,
\end{align*}
Note that 
$2a_{ij}(\vphi)-1=2\theta(-1)^{\1_{\vphi(i)\neq \vphi(j)}}$ for any $\vphi\in [2]^n$. Denote $\eta_i=\1_{\vphi(i)=1}-\1_{\vphi(i)=2}$ and $\eta_i'=\1_{\psi(i)=1}-\1_{\psi(i)=2}$, for any index $i$. We have
$2a_{ij}(\vphi) - 1 = 2\theta\eta_i \eta_j$, so that $\ta_{i,j}(\vphi,\psi)=4\theta^2\eta_i\eta_j\eta_i'\eta_j'$. The term under brackets in the last display can be 
interpreted as an expectation over $\vphi, \psi$, where both variables are sampled uniformly from the set of all mappings from $\{1,\ldots,n\}$ to 
$\{1,2\}$. 
Under this distribution, the variables $\eta_i$ for $i=1,\ldots,n$ are independent Rademacher, as well as the variables $\eta_i'$, and both samples are independent. Further note that the variables $R_i:=\eta_i\eta_i'$ for $i=1,\ldots n$ form again a sample of independent Rademacher  variables. It is thus enough to bound, 
\[ E \left[ e^{4\theta_n^2\sum_{i<j} R_i R_j}\right], \]
where $E$ denotes expectation under the law of the $R_i$.
The previous exponent is an instance of Rademacher chaos; its Laplace transform can be bounded using Lemma \ref{lemcaos}. If $Z_n:=\sum_{i<j} R_i R_j$, we have that for any $\veps$ (say $\veps=1/2$), there exists $\lambda>0$ such that for all $n\ge 2$,
\[ E  \left[e^{|Z_n|/(\lambda n)}\right] \le 1+\veps. \]
Choosing $n\theta_n^2:= 1/(4\lambda)$ leads to $ \|\mathbb P-\mathbb Q\|_1^2 \le \veps=1/2$, so that the minimax risk 
is bounded below by $(32n\lambda)^{-1}$. 

To obtain the constants as in the remark below the Theorem, using Lemma \ref{lemcaosc} in the final step of the proof with $\te_n^2=1/(12s_n)$, $s_n^2=n(n-1)/2$, $r(\cdot)$ as in Lemma \ref{lemcaosc}, gives
\begin{align*} 
 R_M & \ge \frac{\te_n^2}{4}\left\{1-\frac{1}{2}\sqrt{r(4\te_n^2 s_n)}\right\}, \\
& \ge  \frac{\sqrt{2}}{48 n} \left\{1-\frac{1}{2}\sqrt{r\left(\frac{1}{3}\right)}\right\}\ge \frac{0.45}{48 n}\ge\frac{1}{107n}. \qquad  \qedhere
\end{align*}   
\end{proof} 

\subsection{Lower bounds for $k$ classes} \label{secprk}

Here the problem is more delicate compared to $k=2$, as the typical number of nodes per class now depends on $k$, and, in the random design case, the data distribution for $\te=0$, around which we build the lower bound, is itself a mixture.
As a first step, we start by establishing a result in a fixed design setting, that is 
\begin{equation} \label{lbkfixed}
 \inf_{T} \sup_{\te\in\Theta,\, \vphi\in [k]^n} E_{\te,\vphi}\left[ T -\te \right]^2 \ge c\frac{k}{n}
 \qquad\text{ for some }c>0\text{ and }k\ge 3\;.
\end{equation}  
\begin{proof}[Proof of \eqref{lbkfixed}]
Define $m=m_k=2\lfloor \frac{n}{k} \rfloor$. 
Set $S_1=\{1,\ldots,m\}$ and $S_2=\{m+1,\ldots,n\}$.
Let $\vphi_0\in [k]^n$ be a mapping such that
\begin{equation} \label{lbtech1}  
 \vphi_0(S_1)\subset \{1,2\} \qquad\text{ and }\qquad \vphi_0(S_2)\subset 
\{3,\ldots,k\}.
\end{equation}                           
Let $\vphi\in [k]^n$ be such that 
\begin{equation} \label{lbtech2}    
 \vphi(i)=\vphi_0(i)\quad\text{ whenever }i\in S_2 \qquad\text{and}\qquad \vphi(S_1) \subset \{1,2\},
\end{equation} 
and denote by $\cF=\cF(\vphi_0,S_1)$ the set of all such $\vphi$'s.
Then the restriction $\vphi_{| S_1}=:\vphi_1$ of $\vphi\in\cF$ to $S_1$ 
can be identified to an element of $[2]^m$.

Let $M^\theta$ be the $k\times k$ matrix  defined in \eqref{zte}.
For $\vphi\in\cF$, let $R_{\vphi}$ denote the matrix with general term $r_{ij}=r_{ij}(\vphi)$ equal to $M^\theta_{\vphi(i)\vphi(j)}$.
There are as many such matrices as possible  $\vphi_1$s, that is $|[2]^m|=2^m$. As $\vphi$ and $\vphi_0$ are identical by construction on $S_2$,
\[      
 r_{ij}  =   
 \begin{cases}     
 M^\theta_{\vphi_1(i)\vphi_1(j)} & \mbox{if }(i,j)\in S_1\times S_1\\
 M^\theta_{\vphi_1(i)\vphi_0(j)} = a_{\vphi_0(j)-2} = M^0_{\vphi_0(i)\vphi_0(j)}
 & \mbox{if } (i,j)\in S_1\times S_2\\
  M^\theta_{\vphi_0(i)\vphi_1(j)} = a_{\vphi_0(i)-2}  = M^0_{\vphi_0(i)\vphi_0(j)}
 & \mbox{if } (i,j)\in S_2\times S_1\\
 M^0_{\vphi_0(i)\vphi_0(j)}  & \mbox{if } (i,j) \in S_2 \times S_2
 \end{cases}       
\]
where $\vphi_1$ belongs to $[2]^m$. 
Next set, with $A_0$   the matrix with general term $m_{ij}(\vphi_0)=M^0_{\vphi_0(i)\vphi_0(j)}$,
\[ \mathbb{P}'= P_{A_0}\qquad\text{ and }\qquad  \mathbb{Q}'= \frac{1}{2^m} \sum_{\vphi\in\cF}
P_{R_{\vphi}}.  \]
Now we apply Lemma \ref{lem-tv} to $\mathbb{P}', \mathbb{Q}'$. Both $P_{A_0}$  and $P_{R_\vphi}$ are product measures over all pairs of indices $(i,j)$ with $1\le i<j\le n$. By construction, the individual components of 
these products coincide as soon as either $i$ or $j$ does not belong to $S_1$. We write
\[ P_{A_0} = \bigotimes_{i<j} P_{A_0}(i,j)
\qquad\text{ and }\qquad
P_{R_\vphi} =    \bigotimes_{i<j} P_{R_\vphi}(i,j).
 \]
where, for any indices $i,j$ with $i<j$,
\[ P_{A_0}(i,j) = \text{Be}(M^0_{\vphi_0(i)\vphi_0(j)})\qquad\text{ and 
}\qquad
P_{R_\vphi}(i,j)=  \text{Be}(M^\te_{\vphi(i)\vphi(j)}). \]
For ${\vphi,\psi\in[4]^n}$, we set 
\[ \tau_{i,j}(\vphi,\psi)=  P_{A_0}(i,j)\left[\left(\frac{dP_{R_\vphi}(i,j)}{dP_{A_0}(i,j)}-1\right)
\left(\frac{dP_{R_\psi}(i,j)}{dP_{A_0}(i,j)}-1\right)\right]. \]
If $i$ or $j$ belongs to $S_2$, then 
$r_{ij}(\vphi)=M^0_{\vphi_0(i)\vphi_0(j)}=A_0(i,j)=r_{ij}(\psi)$ by 
definition,
in which case the last display equals $0$. 
In Lemma \ref{lem-tv}, where $\vphi,\psi$ play the role of the indices $k,l$.
Identifying $\psi_{|S_1}$ with the corresponding mapping $\psi_1\in [2]^m$, we have $\|\mathbb{P}'-\mathbb{Q}'\|_1^2\le \chi^2(\mathbb{Q}',\mathbb{P}')$ and 
\begin{equation}\label{fixed-fbound}
 \chi^2(\mathbb{Q}',\mathbb{P}') \le \frac{1}{2^{2m}} 
\sum_{ \vphi_1,\psi_1\in [2]^m }
\prod_{1\le i<j\le m} (1+\tau_{i,j}(\vphi_1,\psi_1)) - 1.
\end{equation}
The last expression coincides with the bound obtained in the proof of Theorem \ref{thm-twocl}, with $n$ replaced by $m=m_k$. As in that proof, there hence exist independent Rademacher variables $R_1,\ldots, R_m$ such that $Z_m=m^{-1}\sum_{1\le i<j\le m} R_iR_j$ satisfies
\[  \chi^2(\mathbb{Q}',\mathbb{P}') \le \mathbb{E}\exp\left[ 4m\te^2 |Z_m| \right]-1. \]
Provided $\te$ is defined as, for $a$ a small enough constant, 
\[ \te^2 =  a (4m)^{-1} \sim \Bigl(\frac{n}{k}\Bigr)^{-1},\]
 using Lemma \ref{lemcaos}  as in the proof of Theorem \ref{thm-twocl} leads to the bound 
$\| \mathbb{P}'-\mathbb{Q}' \|_1\le 1/2$ if $\theta^2$ is a small enough multiple of $k/n$, which again leads to a lower bound for the minimax risk of a positive constant times 
$k/n$, which proves \eqref{lbkfixed}. 
\end{proof}

\begin{proof}[Proof of Theorem \ref{thm-kcl}]
For $e=e_k$ and $M^\theta$ as in \eqref{propk}-\eqref{zte}, let
\begin{equation*}
\mathbb{Q}^\te = \frac1{k^n} \sum_{\vphi\in [k]^n} \bigotimes_{i<j} \text{Be}(M_{\vphi(i)\vphi(j)}^{\te}), 
\end{equation*}
and set $\mathbb{P}=\mathbb{Q}^{0}$ corresponding to $\te=0$. Our aim 
is to show that $\mathbb{Q}^\te$ and $\mathbb{P}$ are close in the sense $\|\mathbb{Q}^\te-\mathbb{P}\|_1\le 1/2$ say, while $\te$ is a fixed positive multiple of $\sqrt{k/n}$. For a given $\vphi\in [k]^n$, set \[ S_1:= 
\vphi^{-1}(\{1,2\})\qquad\text{ and }\qquad S_2:=\vphi^{-1}(\{3,4,\ldots,k\}). \]
By definition we have $S_1=S_2^c:=\{1,\ldots,n\}\setminus S_2$ and $|S_1|+|S_2|=n$.

The proof has two steps. First, one shows that with high probability one can restrict to designs (i.e. specific mapping $\vphi$'s) such that there 
are around $2n/k$ nodes that have label either $1$ or $2$. Second, we show that estimation with a random design  is `harder' than in the (easiest) 
typical  fixed design case. This argument is reminiscent of `information processing inequalities' encountered in information theory, although here 
a maximisation also takes place for not knowing the class labels. It is then important to maximise only over designs obtained from Step 1, in order for the lower bound rate to be $k/n$.         
                           
{\em Step 1.} One first shows that it is possible to restrict the sum in the definition of $\mathbb{Q}^\te$ and $\mathbb{P}$ to $\vphi$'s in the set
\begin{align*} \cA_n & =\left\{\vphi\in[k]^n, \quad \left|\,|\vphi^{-1}(\{1,2\})|- \frac{2n}{k} \right|\le \frac{n}{k} \right\} \\
& =\left\{\vphi\in[k]^n, \quad \left|\,|\vphi^{-1}(\{3,\ldots,k\})| - \frac{(k-2)n}{k} 
\right|\le \frac{n}{k} \right\}. 
\end{align*} 
The reason is that the large majority of sets $S_1$ have a cardinality of 
the order close to $n/k$. The proportion of $\vphi$'s not in $\cA_n$ among all possible $\vphi$'s is given by the probability of a binomial $Y\sim\text{Bin}(n,2/k)$ variable being farther than $n/k$ from its mean. By Bernstein's inequality, as $v:=\text{Var}[Y]=n(2/k)(1-2/k)$, for any $t>0$,
\[ \mathbb{P}\left[ \left|Y - \frac{2n}{k}\right| > t \right]  \le 2\exp\left\{
- \frac{t^2}{2v+t} \right\}.\]
Taking $t=n/k$ and setting  $R_n:=|\cA_n|$, we have just shown that
\[ 0\le 1-\frac{R_n}{k^n} \le 2e^{-\frac{n}{k}(5-\frac{8}{k})^{-1}}. \]
Now set
\[ \widetilde{\mathbb{Q}}^\te = \frac1{R_n} \sum_{\vphi\in\cA_{n}} \bigotimes_{i<j} \text{Be}(M_{\vphi(i)\vphi(j)}^{\te})\quad 
\text{and}\quad\widetilde{\mathbb{P}}= \widetilde{\mathbb{Q}}^0.\]
By the triangle inequality,
\[ \| \mathbb{Q}^\te-\mathbb{P}  \|_1 \le \| \mathbb{Q}^\te - \widetilde{\mathbb{Q}}^\te \|_1
+ \| \widetilde{\mathbb{Q}}^\te - \widetilde{\mathbb{P}}\|_1 + 
\| \widetilde{\mathbb{P}} -  \mathbb{P}\|_1.
\]
By Lemma \ref{approxmix},  $\| \mathbb{Q}^\te - \widetilde{\mathbb{Q}}^\te \|_1+
\| \widetilde{\mathbb{P}} -  \mathbb{P}\|_1$ is bounded above by $4(1-R_n/k^n)\le 
8e^{-\frac{n}{k}(5-\frac{8}{k})^{-1}}$.

{\em Step 2.} We now focus on bounding the middle term $\| \widetilde{\mathbb{Q}}^\te - \widetilde{\mathbb{P}}\|_1 $. Let $\Sigma_n$ denote the collection of subsets of $\{1,2,\ldots,n\}$ with $|S_2-(k-2)n/k|\le n/k$. For a given $S\in\Sigma$, let $\vphi_S=\vphi_{|S}$ denote the restriction of $\vphi$ to $S$. Below we use the notation $\sum_{\vphi_S}$ with the meaning that each term of the sum corresponds to a possible mapping $\vphi_S$, that is a given collection of values $(\vphi(i))_{i\in S}\in \{1,\ldots,k\}^{S}$. 
 
 To do so, we rewrite $\widetilde{\mathbb{Q}}^\te$ and $\widetilde{\mathbb{P}}$ as `mixtures of mixtures', by splitting the sum over $\vphi$ into a sum over $S_2, \vphi_{S_2}$ and $\vphi_{S_1}$ given $S_2$. Specifying $\vphi$ is equivalent to 
giving oneself $S_2$ (then $S_1=S_2^c$), $\vphi_{S_2}$ and $\vphi_{S_2^c}=\vphi_{S_1}$.
Denote 
\[ P_\vphi^\te(i,j) = \text{Be}(M_{\vphi(i)\vphi(j)}^{\te})\qquad\text{ 
and }\qquad
P_\vphi^\te = \bigotimes_{i<j} P_\vphi^\te(i,j).\]
For given $S_2$ and $\vphi_{S_2}$, set
 \[ T_{\vphi,S_2}^\te = \frac{1}{2^{n-|S_2|}} 
\sum_{\vphi_{S_1}\given S_2, \vphi_{S_2}} P_\vphi^\te, \] 
where one sums over all possible mappings $\vphi_{S_1}$, while $S_2$ and $\vphi_{S_2}$ are fixed.
We have
\[ \widetilde{\mathbb{Q}}^\te  = 
\sum_{S_2\in \Sigma_n,\, \vphi_{S_2}} \frac{2^{n-|S_2|}}{R_n} T_{\vphi,S_2}^\te.
\] 
Note that the above measures are normalised to be probability measures. Indeed, given $S_2\in\Sigma_n$, there are $2^{|S_1|}=2^{n-|S_2|}$ possible choices for $\vphi_{S_1}$. As $\widetilde{\mathbb{Q}}^\te$ is of  total mass one, we have
\[ 
\sum_{S_2\in \Sigma_n,\,  \vphi_{S_2}} \la_{S_2}=1
\qquad\text{ for }\quad
\la_{S_2}:=\frac{2^{n-|S_2|}}{R_n}.\]
Using the triangle inequality, one can bound
\begin{align*}
 \| \widetilde{\mathbb{Q}}^\te - \widetilde{\mathbb{P}}\|_1
& = \left\| \sum_{S_2\in \Sigma_n,\,  \vphi_{S_2}}  \la_{S_2} T_{\vphi,S_2}^\te 
-   \sum_{S_2\in \Sigma_n, \vphi_{S_2}} \la_{S_2} T_{\vphi,S_2}^0  \right\|_1 \\
& \le \sum_{S_2\in \Sigma_n, \, \vphi_{S_2}} \la_{S_2}
 \left\| T_{\vphi,S_2}^\te - T_{\vphi,S_2}^0\right\|_1 
  \le \max_{S_2\in \Sigma_n,\,  \vphi_{S_2}} \left \| T_{\vphi,S_2}^\te - 
 T_{\vphi,S_2}^0 \right\|_1.
\end{align*}
It is now sufficient to bound uniformly the above $L^1$-distance.
For simplicity, we denote
\[ T_{\vphi,S_2}^\te =\frac{1}{2^{n-|S_2|}} 
\sum_{\vphi_{S_1}\given S_2} P_\vphi^\te := \frac{1}{2^{n-|S_2|}} 
\sum_{\vphi_1} P_{\vphi_1,\vphi_2}^\te, \]
where $\vphi_2=\vphi_{S_2}$ and $\vphi_1=\vphi_{S_1}$, and $\vphi$ is 
the pair $(\vphi_1,\vphi_2)$. Set
 \[ \la_{S_1}=2^{-(n-|S_2|)}=2^{-|S_1|}.\]
Using the definition of $T_{\vphi,S_2}^\te$ above,
\begin{align*}
&\left\|T_{\vphi,S_2}^\te  - T_{\vphi,S_2}^0\right\|_1
 =   \left\|
\sum_{\vphi_1} \la_{S_1} P_{\vphi_1,\vphi_2}^\te - 
\sum_{\vphi_1} \la_{S_1} P_{\vphi_1,\vphi_2}^0 \right\|_1\\
&  = \left\|
\sum_{\vphi_1} \la_{S_1} \{ P_{\vphi_1,\vphi_2}^\te - 
\sum_{\vphi_1'} \la_{S_1} P_{\vphi_1',\vphi_2}^0 \}\right\|_1\\
 & \le \sum_{\vphi_1} \la_{S_1} 
 \left \| \sum_{\vphi'_1} \la_{S_1} P_{\vphi'_1,\vphi_2}^\te - P_{\vphi_1,\vphi_2}^0\right\|_1 \le 
 \max_{\vphi_1} \left\| \sum_{\vphi'_1} \la_{S_1} P_{\vphi'_1,\vphi_2}^\te -  P_{\vphi_1,\vphi_2}^0\right\|_1.
\end{align*}
Combining this with the previous bounds one deduces that 
\begin{align*}
\| \mathbb{Q}^\te-\mathbb{P}  \|_1 & \le 
 \| \widetilde{\mathbb{Q}}^\te - \widetilde{\mathbb{P}}\|_1 + 8e^{-\frac{n}{k}(5-\frac{8}{k})^{-1}}\\
& \le \max_{S_2\in \Sigma_n, \vphi(S_2)} \max_{\vphi_1} 
\ \left\|  
\sum_{\vphi'_1} \la_{S_1} P_{\vphi'_1,\vphi_2}^\te -  P_{\vphi_1,\vphi_2}^0\right\|_1
+ 8e^{-\frac{n}{k}(5-\frac{8}{k})^{-1}}.
\end{align*}
To conclude the proof, observe that the structure of the bound in the maximum in the last display is nearly identical to the quantities appearing in Equation \eqref{fixed-fbound} for the fixed-design case. 

In the present case, we have a fixed mapping $\vphi:\{1,\ldots,n\}\to\{1,\ldots,k\}$, with $\vphi_1=\vphi_{\given S_1}$ and $\vphi_2=\vphi_{\given S_2}$, that plays the role of $\vphi_0$ in the fixed-design case.  On the other hand, we have a collection of other mappings, say $\bar\vphi$, that coincide with $\vphi$ on $S_2$, that is $\bar\vphi_2=\bar\vphi_{\given S_2}=\vphi_{\given S_2}=\vphi_2$,
and that cover all possible cases for the image of $S_1$, namely $\bar\vphi_1=\bar\vphi_{\given S_1}=\vphi_1'$. 
The only difference to the fixed-design case is that $|S_1|$ belongs to $[n/k,3n/k]$, instead of being exactly $2\lfloor n/k \rfloor$, as specified in the definition of $\Sigma_n$ above. That is, denoting as above $Z_m=m^{-1}\sum_{1\le i<j\le m} R_iR_j$, with $m_1=|S_1|$,
\[  \left\|  
\sum_{\vphi'_1} \la_{S_1} P_{\vphi'_1,\vphi_2}^\te - 
P_{\vphi_1,\vphi_2}^0\right\|_1  \le \mathbb{E}\exp\left[ 4m_1\te^2 |Z_{m_1}| \right] - 1. 
\]
This bound is uniform over $S_2,\vphi_2$.  As $m_1\le 3n/k$, if one chooses $\te^2\le 1/(12\la n/k)$, with $\la=\la(1+\veps)$ the constant in Lemma \ref{lemcaosc}, then this Lemma implies that for any $m_1$ between $n/k$ and $3n/k$, the $L^1$-distance in the last display is bounded by $\veps$. 
 Crucially, the constant $\la$ in Lemma \ref{lemcaos} is independent of the number of terms in the Rademacher chaos. Deduce
 \[ \| \mathbb{Q}^\te-\mathbb{P}  \|_1 \le \veps +  8e^{-\frac{n}{k}(5-\frac{8}{k})^{-1}}. \]
Choosing $n/k>12$ makes this bound smaller than $\veps+4/5<1$ for $\veps<1/5$.  
 \end{proof}

\subsection{Useful lemmas}
\label{sec:lemmas}

Let $\{Z_i,\ i\ge 1\}$ be i.i.d. Rademacher variables.
For reals $x_{ij}$ and $N\ge 2$, set
\begin{align*}
 Y := Y_N & = \sum_{i<j\le N} y_{ij} Z_iZ_j,  \\ 
 s(Y)^2 & =\sum_{i<j\le N} y_{ij}^2. 
\end{align*}

\begin{lem}[Corollary 3.2.6 of \citet{dlpgine99}]  \label{lemcaos}
Let $N\ge 2$, and $Y=Y_N$ and $s(Y)$ as above.  For every $c>1$, there exists $\la=\la(c)>0$ independent of $N$ such that
\[ \mathbb{E}\exp\left[ \frac{|Y|}{\la s(Y)} \right] \le c. \]
\end{lem}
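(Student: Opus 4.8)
The plan is to reduce the statement to the classical hypercontractive moment bounds for a Rademacher chaos of order $2$ and then sum an exponential series. First observe that the claim is scale invariant: replacing the coefficients $y_{ij}$ by $\mu y_{ij}$ multiplies both $Y$ and $s(Y)$ by $\mu$, so one may assume $s(Y)^2=\mathbb{E}[Y^2]=1$ (using that $\mathbb{E}[Z_iZ_jZ_kZ_l]$ vanishes unless $\{i,j\}=\{k,l\}$), and it then suffices to exhibit a $\la=\la(c)$, independent of $N$, with $\mathbb{E}\exp(|Y|/\la)\le c$.

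The key input is hypercontractivity on the discrete cube (the Bonami--Beckner inequality): since $Y=\sum_{i<j\le N}y_{ij}Z_iZ_j$ is a homogeneous polynomial of degree $2$ in the independent Rademacher variables $Z_1,\dots,Z_N$, one has $\|Y\|_p\le (p-1)\|Y\|_2$ for every $p\ge 2$. Consequently $\mathbb{E}|Y|^k\le (k-1)^k\le k^k$ for every integer $k\ge 2$, while $\mathbb{E}|Y|\le (\mathbb{E}[Y^2])^{1/2}=1$ by Cauchy--Schwarz. If one prefers not to invoke hypercontractivity with its sharp constant, the same type of moment bound $\mathbb{E}|Y|^k\le (Ck)^k$, with $C$ a universal constant, can be obtained by first decoupling $Y$ into $\sum_{i,j}y_{ij}Z_iZ_j'$ up to a universal factor and then applying the scalar Khintchine inequality successively in the $Z$- and the $Z'$-variables; this is essentially the route taken in \citet{dlpgine99}, and either version of the bound is all one needs below.

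Given these estimates, expand the exponential and use $k!\ge (k/e)^k$:
\begin{align*}
\mathbb{E}\exp\!\left(\frac{|Y|}{\la}\right)
&=\sum_{k\ge 0}\frac{1}{\la^k k!}\,\mathbb{E}|Y|^k
\le 1+\sum_{k\ge 1}\frac{k^k}{\la^k k!}\\
&\le 1+\sum_{k\ge 1}\left(\frac{e}{\la}\right)^{k}
=\frac{1}{1-e/\la},
\end{align*}
valid as soon as $\la>e$. Choosing $\la=\la(c):=ec/(c-1)$ makes the right-hand side equal to $c$; undoing the normalization gives $\mathbb{E}\exp(|Y|/(\la s(Y)))\le c$ for all $N\ge 2$, as claimed. (With the cruder bound $\mathbb{E}|Y|^k\le (Ck)^k$ the computation is identical with $e$ replaced by $Ce$.)

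The argument is routine once the chaos moment bound is in hand; the only real content is the hypercontractivity (equivalently, decoupling plus Khintchine) estimate. The one point to watch is that the resulting $\la$ must be independent of $N$: this holds precisely because the degree-$2$ bound $\|Y\|_p\le (p-1)\|Y\|_2$ has a constant depending only on $p$ and not on the number of underlying Rademacher variables.
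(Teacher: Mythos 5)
Your proof is correct. The paper does not actually prove this lemma---it simply cites Corollary 3.2.6 of \citet{dlpgine99}---but your argument is essentially the same one the paper uses for the explicit-constant variant (Lemma \ref{lemcaosc}): the degree-two chaos moment bound $\mathbb{E}|Y|^k\le (k-1)^k s(Y)^k$ (Theorem 3.2.2 of \citet{dlpgine99}, i.e.\ hypercontractivity), followed by term-by-term summation of the exponential series with Stirling's bound, and the resulting $\la$ is independent of $N$ for exactly the reason you state.
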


We repeatedly use Lemma \ref{lemcaos} in the case where all $y_{ij}$ are equal to $1$, for various values of $N$. In such a setting, a reformulation is as follows. For any $c>1$ and $N\ge 2$, one can find a constant $a=a(c)$ independent of $N$ such that 
\begin{equation} \label{caoshand}
 \mathbb{E}\exp\left[ a \frac{|Y_N|}{N} \right] \le c. 
\end{equation}

\begin{lem}[Rademacher  chaos with explicit constant]  \label{lemcaosc}
Let $N\ge 2$, and $Y=Y_N$ and $s(Y)$ as above.  For any $0\le\delta\le 1$,
\[ \mathbb{E}\exp\left[ \delta \frac{|Y|}{s(Y)} \right] -1 \le r(\delta), 
\quad\text{with}
\quad r(\delta)=\delta + \frac{\delta^2}{2} + \frac{8\delta^3}{6} + 
\frac{1}{1-e\delta}\frac{(e\delta)^4}{\sqrt{8\pi}}. \]
\end{lem}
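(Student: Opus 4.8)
The plan is to expand the exponential in a power series and estimate each absolute moment of the order-two Rademacher chaos $Y$ separately, treating the first few moments directly and the higher ones through hypercontractivity. Since $|Y|/s(Y)$ is bounded almost surely (a polynomial in finitely many signs, with $s(Y)>0$ assumed), one may write
\[
\mathbb{E}\exp\left[\delta\,\frac{|Y|}{s(Y)}\right]
= 1 + \sum_{m\ge 1}\frac{\delta^m}{m!}\,\frac{\mathbb{E}|Y|^m}{s(Y)^m},
\]
with all terms nonnegative, so it suffices to bound the series term by term.

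First I would handle the low-order terms sharply. Orthogonality of the monomials $Z_iZ_j$ gives $\mathbb{E}Y^2=\sum_{i<j}y_{ij}^2=s(Y)^2$, so the $m=2$ term equals exactly $\delta^2/2$, while $\mathbb{E}|Y|\le(\mathbb{E}Y^2)^{1/2}=s(Y)$ by Cauchy--Schwarz makes the $m=1$ term at most $\delta$. For $m\ge 2$ I would invoke the moment form of second-order Rademacher hypercontractivity underlying Lemma \ref{lemcaos} (cf.\ \citet{dlpgine99}, Cor.~3.2.6), namely $\|Y\|_m\le(m-1)\|Y\|_2=(m-1)s(Y)$, i.e.\ $\mathbb{E}|Y|^m\le(m-1)^m s(Y)^m$. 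This gives $8\delta^3/6$ for the $m=3$ term and, for the tail, the estimate $\sum_{m\ge 4}\delta^m(m-1)^m/m!$.

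It then remains to bound this tail by $(e\delta)^4/\{\sqrt{8\pi}\,(1-e\delta)\}$. Stirling's inequality $m!\ge\sqrt{2\pi m}\,(m/e)^m$ together with $(1-1/m)^m\le 1$ yields $(m-1)^m/m!\le e^m/\sqrt{2\pi m}\le e^m/\sqrt{8\pi}$ for every $m\ge 4$, and summing the resulting geometric series produces exactly the claimed term. Adding the four contributions gives $r(\delta)$. I do not expect a substantive obstacle: the only points requiring attention are extracting the clean constants (which forces one to treat $m=1,2$ exactly rather than through the generic hypercontractive bound), checking that the crude Stirling estimate is valid and still loose enough for all $m\ge 4$ to produce the constant $\sqrt{8\pi}$, and noting that the geometric summation in the last step is valid precisely for $e\delta<1$, which is the regime in which the lemma is applied (e.g.\ with $\delta=1/3$ in the proof of Theorem \ref{thm-twocl}).
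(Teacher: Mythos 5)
Your proposal is correct and follows essentially the same route as the paper: term-by-term expansion of the exponential, the sharp treatment of $m=1,2$, the moment bound $\mathbb{E}|Y|^m\le (m-1)^m s(Y)^m$ from de la Pe\~na and Gin\'e (the paper cites their Theorem 3.2.2), and the identical Stirling estimate $(m-1)^m/m!\le e^m/\sqrt{8\pi}$ for $m\ge 4$ followed by geometric summation. Your remark that the final summation requires $e\delta<1$ is a fair observation about the range of usefulness of the stated bound, but it does not affect the argument in the regime where the lemma is applied.
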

The lemma applied with $\delta=1/3$ gives a bound $1.87$ for the right hand side.
\begin{proof}
Theorem 3.2.2 in \citep{dlpgine99} gives, for any $k\ge 2$,
\[ E |Y|^k \le (k-1)^k s(Y)^k. \]
For $k=1$ one has $E[|Y|] \le E[Y^2]^{1/2}=s(Y)$. From this one deduces that for any $0\le\delta\le 1$,
\begin{align*}
 E \exp \left[ \delta \frac{|Y|}{s(Y)} \right] 
& \le 1+ \delta + \frac{\delta^2}{2} + \frac{8\delta^3}{6} + 
\sum_{k\ge 4} \frac{(k-1)^k}{k!} \delta^k,
\end{align*}
and the result follows from an application of the nonasymptotic Stirling bound
 $k! \ge e^{-k} k^{k+\frac12}\sqrt{2\pi}$ valid for $k\ge 1$.
\end{proof}

\begin{lem}[Le Cam's method `point versus mixture'] \label{lem-lc}
Let $\cP=\{P_{M},\ M\in\cM\}$ be a collection of probability measures indexed by an arbitrary set $\cM=\{M_0,M_1,\ldots, M_N\}$, $N>1$. Set
\[ \mathbb P = P_{M_0},\quad \mathbb Q = \frac1N \sum_{k=1}^N P_{M_k}.\]
If $\psi$ is a real-valued functional such that $\psi(P_{M_0})=\tau$ and $\psi(P_{M_i})=\theta$ for any $i=1,\ldots,N$, then
\[ \inf_{\hat \theta} \sup_{M\in\cM} E_{P_M}(\hat\psi(X) - \psi(P_M))^2
\ge  \frac14(\theta-\ta)^2(1- \frac12 \| \mathbb P - \mathbb Q\|_1),\]
where the infimum is over all estimators $\hat\psi(X)$ of  $\psi(P_M)$ based on the observation of $X\sim P_M$.
\end{lem}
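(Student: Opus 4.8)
The plan is to run the classical ``point versus mixture'' Le Cam reduction, but to carry out the final step by a pointwise (Bayes) optimisation rather than by reduction to a test: the latter would only yield the constant $1/8$, whereas the statement asks for $1/4$. So I would fix an arbitrary estimator $\hat\psi=\hat\psi(X)$ and set $R:=\sup_{M\in\cM}E_{P_M}(\hat\psi-\psi(P_M))^2$; the goal is to show $R\ge\frac14(\theta-\tau)^2(1-\frac12\|\mathbb P-\mathbb Q\|_1)$, after which taking the infimum over $\hat\psi$ finishes the proof.

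First I would collapse the supremum onto the two distributions $\mathbb P$ and $\mathbb Q$. Since $\psi(P_{M_0})=\tau$ we have $R\ge E_{\mathbb P}(\hat\psi-\tau)^2$, and since $\psi(P_{M_i})=\theta$ for all $i\ge1$, averaging the bounds $R\ge E_{P_{M_i}}(\hat\psi-\theta)^2$ over $i=1,\dots,N$ gives $R\ge\frac1N\sum_{i=1}^N E_{P_{M_i}}(\hat\psi-\theta)^2=E_{\mathbb Q}(\hat\psi-\theta)^2$. Adding the two,
\[ 2R\ \ge\ E_{\mathbb P}(\hat\psi-\tau)^2+E_{\mathbb Q}(\hat\psi-\theta)^2. \]

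The main computation is then a pointwise lower bound on the right-hand side. Writing $p,q$ for densities of $\mathbb P,\mathbb Q$ against a common dominating measure $\mu$ (counting measure on $\{0,1\}^{n(n-1)/2}$ in the applications), the right-hand side equals $\int_{\cX}\big[(\hat\psi(x)-\tau)^2 p(x)+(\hat\psi(x)-\theta)^2 q(x)\big]\,\mu(dx)$, which is at least $\int_{\cX} m(x)\,\mu(dx)$ with $m(x)=\inf_{t\in\RR}\big[(t-\tau)^2 p(x)+(t-\theta)^2 q(x)\big]$. An elementary minimisation (optimal $t^\star=(\tau p+\theta q)/(p+q)$) gives $m=(\theta-\tau)^2\,pq/(p+q)$, with the convention $0/0=0$; the function $m$ is measurable, so this integral is well defined. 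Using $pq/(p+q)\ge\frac12(p\wedge q)$ pointwise (immediate: if $p\le q$ then $pq/(p+q)\ge p/2$) together with $\int_{\cX}(p\wedge q)\,d\mu=1-\frac12\int_{\cX}|p-q|\,d\mu=1-\frac12\|\mathbb P-\mathbb Q\|_1$, I would obtain
\[ 2R\ \ge\ (\theta-\tau)^2\!\int_{\cX}\frac{pq}{p+q}\,d\mu\ \ge\ \frac{(\theta-\tau)^2}{2}\,\Big(1-\tfrac12\|\mathbb P-\mathbb Q\|_1\Big), \]
and dividing by $2$ gives exactly the claimed bound.

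There is no genuine obstacle here: the only subtlety is to perform the last minimisation at the level of the integrand — equivalently, to use the Bayes estimator $t^\star$ against the prior $\tfrac12\mathbb P+\tfrac12\mathbb Q$ — since this is what produces the sharp constant $1/4$. Replacing this step by the cruder argument ``if the induced test errs, its squared loss is at least $(\theta-\tau)^2/4$'' combined with the testing inequality $E_{\mathbb P}\phi+E_{\mathbb Q}(1-\phi)\ge 1-\frac12\|\mathbb P-\mathbb Q\|_1$ would only give the weaker constant $1/8$.
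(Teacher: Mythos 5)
Your proof is correct, and it is exactly the ``standard variation on the case $N=1$'' that the paper's one-line proof defers to (the citation to Yu's Le Cam lemma): collapse the supremum onto $\mathbb P$ and $\mathbb Q$ by convexity of the risk in the mixture, then lower-bound the two-point Bayes risk pointwise via $\inf_t[(t-\tau)^2p+(t-\theta)^2q]=(\theta-\tau)^2pq/(p+q)\ge\frac12(\theta-\tau)^2(p\wedge q)$ and the identity $\int p\wedge q=1-\frac12\|\mathbb P-\mathbb Q\|_1$. All steps check out and the constant $1/4$ matches the statement; your remark that the cruder testing reduction would only give $1/8$ is also accurate.
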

\begin{proof}
This is a standard variation on the case where $N=1$ stated in e.g. \citep{binyu}. 
\end{proof}

\begin{lem}[Bound on total variation distance] \label{lem-tv}
For $n\ge 1$, let $P_1,\ldots,P_n$ and $Q_1(k),\ldots,Q_n(k)$ for $1\le k\le N$, for some $N\ge 1$, be probability measures. Set
\[ P =\bigotimes_{i=1}^n P_{i},\quad Q(k) = \bigotimes_{i=1}^n Q_i(k)  \quad \mathbb{P} = P,
 \quad \mathbb{Q} = \frac1N \sum_{k=1}^N Q(k).\]
Suppose that for any $i$, $Q_i(k)$ has density $1+\Delta_i(k)$ with respect to 
$P_i$. Denote $\ta_i(k,l)=P_i\Delta_i(k)\Delta_i(l)$. Then, for $\chi^2(\mathbb{Q},\mathbb{P})=\int (d\mathbb{Q}/d\mathbb{P} - 1)^2 d\mathbb{P}$, 
\begin{align*}
\| \mathbb P - \mathbb Q\|_1^2 & \le \chi^2(\mathbb{Q},\mathbb{P})
 = \frac{1}{N^2} \sum_{k,l} \prod_{i=1}^n \{1+\ta_i(k,l)\} - 1.
\end{align*}
\end{lem}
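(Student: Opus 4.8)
The plan is to prove the two assertions in turn: first the inequality $\|\mathbb{P}-\mathbb{Q}\|_1\le \chi^2(\mathbb{Q},\mathbb{P})^{1/2}$ by Cauchy--Schwarz, and then the closed-form evaluation of $\chi^2(\mathbb{Q},\mathbb{P})$ by exploiting the product structure of $\mathbb{P}$ together with the assumed densities.

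First I would observe that $Q_i(k)\ll P_i$ for every $i$ and $k$, hence $Q(k)=\bigotimes_i Q_i(k)\ll\bigotimes_i P_i=\mathbb{P}$, and therefore $\mathbb{Q}=\frac1N\sum_k Q(k)\ll\mathbb{P}$. Writing $f:=d\mathbb{Q}/d\mathbb{P}$, one has $\|\mathbb{P}-\mathbb{Q}\|_1=\int|f-1|\,d\mathbb{P}$. Applying Cauchy--Schwarz under the probability measure $\mathbb{P}$ to the product $|f-1|\cdot 1$ gives $\bigl(\int|f-1|\,d\mathbb{P}\bigr)^2\le\int(f-1)^2\,d\mathbb{P}=\chi^2(\mathbb{Q},\mathbb{P})$, which is the first claimed bound.

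Next I would compute $\chi^2(\mathbb{Q},\mathbb{P})+1=\int f^2\,d\mathbb{P}$. Since the Radon--Nikodym derivative of a product measure is the product of the coordinatewise derivatives, $dQ(k)/dP=\prod_{i=1}^n(1+\Delta_i(k))$ as a function of the $n$ coordinates, so $f=\frac1N\sum_{k=1}^N\prod_{i=1}^n(1+\Delta_i(k))$. Expanding the square, using $\mathbb{P}=\bigotimes_{i=1}^n P_i$, and interchanging the (finite) sums with the integral, I get
\[
 \int f^2\,d\mathbb{P}=\frac{1}{N^2}\sum_{k,l=1}^N\ \prod_{i=1}^n\int(1+\Delta_i(k))(1+\Delta_i(l))\,dP_i.
\]
For each fixed $i$, expanding the integrand and using that $1+\Delta_i(k)$ is a probability density with respect to $P_i$ (hence $\int\Delta_i(k)\,dP_i=0$ and likewise for $l$) yields $\int(1+\Delta_i(k))(1+\Delta_i(l))\,dP_i=1+\tau_i(k,l)$. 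Substituting this back gives $\chi^2(\mathbb{Q},\mathbb{P})+1=\frac1{N^2}\sum_{k,l}\prod_{i=1}^n\{1+\tau_i(k,l)\}$, which is the stated identity.

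There is no substantive obstacle here: the argument is entirely routine. The only points deserving a word of care are the absolute continuity $\mathbb{Q}\ll\mathbb{P}$ needed to write $f$ and apply Cauchy--Schwarz, the factorisation of the density of a product measure into a product of densities, and the vanishing of $\int\Delta_i(k)\,dP_i$, which is precisely the statement that each $Q_i(k)$ has total mass one and is what collapses the cross terms.
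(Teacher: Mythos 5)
Your proof is correct and is precisely the standard argument the paper invokes without detail (Cauchy--Schwarz for $\|\mathbb{P}-\mathbb{Q}\|_1^2\le\chi^2(\mathbb{Q},\mathbb{P})$, then Fubini on the product measure and the vanishing of $\int\Delta_i(k)\,dP_i$ to collapse the cross terms). Nothing is missing; the absolute-continuity and normalisation points you flag are exactly the ones that need a word of care.
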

\begin{proof}
The first bound on distances is standard, while the second bound follows from elementary calculations.
\end{proof}

\begin{lem} \label{approxmix}
Let $N,R$ be two integers with $N\ge 2$, $1\le R\le N$, and $(P_i)_{i\in I}$ be an arbitrary collection of probability measures with $|I|=N$. If 
$J\subset I$ and $|J|=R$, we have 
\[  \left\|\frac1N \sum_{i\in I} P_i - \frac1R \sum_{i\in J} P_i \right\|_1
\le 2\left(1-\frac{R}{N}\right). \]
\end{lem}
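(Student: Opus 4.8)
The plan is to exhibit the larger uniform average $\frac1N\sum_{i\in I}P_i$ as an explicit convex combination of the smaller average $\frac1R\sum_{i\in J}P_i$ and the complementary average over $I\setminus J$; once this is done the inequality reduces to a one-line triangle-inequality estimate. First I would dispose of the degenerate case $R=N$: then $J=I$, the two averages coincide, the left-hand side is $0$, and the bound holds trivially. So assume from now on $R<N$.

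Next I would introduce the two probability measures
\[ \nu := \frac1R\sum_{i\in J}P_i, \qquad \rho := \frac1{N-R}\sum_{i\in I\setminus J}P_i, \]
each of which is a convex combination of the $P_i$, hence a probability measure. Splitting the sum defining the uniform average over $I$ into its part indexed by $J$ and its part indexed by $I\setminus J$, and using $\frac RN+\frac{N-R}{N}=1$, gives the identity
\[ \frac1N\sum_{i\in I}P_i \;=\; \frac RN\,\nu \;+\; \frac{N-R}{N}\,\rho . \]

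Subtracting $\nu$ from both sides and simplifying then yields
\[ \frac1N\sum_{i\in I}P_i - \frac1R\sum_{i\in J}P_i \;=\; \frac{N-R}{N}\,(\rho-\nu), \]
so that, pulling the nonnegative scalar out of $\|\cdot\|_1$ and applying the triangle inequality together with $\|\rho\|_1=\|\nu\|_1=1$ (each is a probability measure),
\[ \Bigl\| \frac1N\sum_{i\in I}P_i - \frac1R\sum_{i\in J}P_i \Bigr\|_1 \;=\; \frac{N-R}{N}\,\|\rho-\nu\|_1 \;\le\; \frac{N-R}{N}\bigl(\|\rho\|_1+\|\nu\|_1\bigr) \;=\; 2\Bigl(1-\frac RN\Bigr), \]
which is the asserted bound.

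There is no genuine obstacle here; the only point requiring care is the choice of decomposition — isolating the ``common mass'' carried by the indices in $J$ so that what remains is a difference of two probability measures scaled by $(N-R)/N$, whose $\|\cdot\|_1$-norm is at most $2$. Conceptually this is the measure-theoretic analogue of the elementary fact that replacing a uniform average over $N$ terms by the uniform average over a sub-collection of $R$ of them perturbs the resulting mixture by at most (twice) the total weight $1-R/N$ of the discarded terms.
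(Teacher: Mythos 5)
Your proof is correct and follows essentially the same route as the paper's: split the sum over $I$ into the parts indexed by $J$ and $I\setminus J$, then apply the triangle inequality using that each normalized sub-sum is a probability measure. Your convex-combination phrasing is just a slightly more structured presentation of the paper's one-line argument.
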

\begin{proof}
The result follows by splitting the sum over $I$ in a sum over $J$ and $I\setminus J$, applying the triangle inequality and using the fact that  $\|\sum_{j\in J}P_j\|_1=|J|$.
\end{proof}

\section{Proofs for results on graphon functionals} \label{sec:lbg}

To prove Theorem \ref{thm-quad}, we observe that polynomial graphons of bounded degree include the graphon $w_\theta$ in \eqref{polyw}.
The proof approximates this smooth graphon $w_\theta$ by a piecewise constant graphon, and then uses a lower-bound for such piecewise constants. We prove this lower bound, Lemma \ref{lem-kclasses}, first. Similar to the SBM case, this builds on Le Cam's point versus mixture method. We then proceed to prove Theorem \ref{thm-quad}.


\subsection{Auxiliary lower bound}

Assume the
function $w$ is piecewise constant, with different values taken along blocks corresponding to a regular partition of $[0,1]^2$ in $k\times k=k^2$ blocks, and $k$ an even integer $k=2l$, with $l\ge 1$. That defines a 
law of the form
\[ \frac{1}{k^n} 
\sum_{\vphi\in[k]^n} \bigotimes_{i<j} \text{Be}(Q_{\vphi(i)\vphi(j)})=P_{e_k,Q}, \]
where $\vphi$ is an element of $[k]^n$ and $Q=Q^\theta$ a given $k\times k$ matrix defined below. In the next statement and proof, $E_\te$ denotes the expectation under this distribution.
Denote by $O_k$ the $k\times k$ matrix with only ones as coefficients,
\[ 
O_k = 
\begin{bmatrix}
     1 & \cdots & 1 \\
     \vdots & \vdots & \vdots \\
     1 & \cdots & 1 
    \end{bmatrix},
\]
and, for a {\em symmetric} $l\times l$ matrix $A$ with coefficients $A_{ij}\in[0,1]$, define the  $k\times k=(2l)\times (2l)$ matrix 
\[ B := \left[
\begin{array}{c|c}
A  & -A \\ \hline
-A & A
\end{array}\right]. \]
We define $Q=Q^\theta$ as the $k\times k=(2l)\times (2l)$ matrix
\begin{equation} \label{defQ}
Q=Q^\theta= \frac12 \cdot O_k + \theta\cdot B = 
\frac12 \cdot O_k + \theta\left[
\begin{array}{c|c}
A  & -A \\ \hline
-A & A
\end{array}\right].
\end{equation}
\begin{lem} \label{lem-kclasses}
Let $k=2l$ be an even integer and $A$ an arbitrary symmetric $l\times l$ matrix. 
Let $Q=Q^\te$ be the matrix defined in \eqref{defQ}.
There exists a constant $c_3>0$ such that
\[ \inf_{\hat \theta} \sup_{\te\in(-1/2,1/2)} E_\te(\hat\theta(X) - \theta)^2
\ge \frac{c_3}{n},\]
where the infimum is over all estimators of $\te$ valid under $E_\te=E_{P_{e_k,Q^\te}}$.  
\end{lem}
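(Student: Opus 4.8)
The plan is to follow the proof of Theorem~\ref{thm-twocl} closely. The key simplification here is that the null distribution is a genuine product measure rather than a mixture: since every entry of $Q^0=\tfrac12 O_k$ equals $1/2$, the law $\mathbb{P}:=P_{e_k,Q^0}$ is just the Erd\"os--Renyi law $ER(1/2)$. I would compare it with the mixture $\mathbb{Q}^\te:=P_{e_k,Q^\te}=k^{-n}\sum_{\varphi\in[k]^n}\bigotimes_{i<j}\text{Be}(Q^\te_{\varphi(i)\varphi(j)})$ for a value $\te=\te_n$ of order $n^{-1/2}$. By Le Cam's point-versus-mixture bound (Lemma~\ref{lem-lc}, applied with $\ta=0$), it then suffices to produce such a $\te_n$ with $\te_n^2$ a fixed multiple of $1/n$ and $\|\mathbb{P}-\mathbb{Q}^{\te_n}\|_1\le 1/2$; this yields a lower bound for the minimax risk of a positive multiple of $1/n$.

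To control the $L^1$ distance I would apply Lemma~\ref{lem-tv} exactly as in the proof of Theorem~\ref{thm-twocl}, obtaining
\[
\|\mathbb{P}-\mathbb{Q}^\te\|_1^2\ \le\ \frac{1}{k^{2n}}\sum_{\varphi,\psi\in[k]^n}\ \prod_{1\le i<j\le n}\bigl(1+\tau_{ij}(\varphi,\psi)\bigr)\ -\ 1,
\]
where $\tau_{ij}(\varphi,\psi)$ is the correlation, against $\mu=\text{Be}(1/2)$, of the likelihood ratios to $\mu$ of $\text{Be}(Q^\te_{\varphi(i)\varphi(j)})$ and $\text{Be}(Q^\te_{\psi(i)\psi(j)})$. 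Using the elementary identity $\int(\tfrac{d\la_1}{d\mu}-1)(\tfrac{d\la_2}{d\mu}-1)\,d\mu=\tfrac{(q-s)(r-s)}{s(1-s)}$ recalled in that proof, together with $Q^\te=\tfrac12 O_k+\te B$ and $s=1/2$, one gets $\tau_{ij}(\varphi,\psi)=4\te^2\,B_{\varphi(i)\varphi(j)}\,B_{\psi(i)\psi(j)}$.

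The core step is to linearise $B$. Write $k=2l$ and identify a label $a\in\{1,\dots,2l\}$ with the pair $(\epsilon_a,\bar a)$, where $\epsilon_a=+1$ if $a\le l$ and $\epsilon_a=-1$ otherwise, and $\bar a\in\{1,\dots,l\}$ equals $a$ or $a-l$ accordingly; by the block structure of $B$ one has $B_{ab}=\epsilon_a\epsilon_b A_{\bar a\bar b}$. Under the uniform law on $[k]^n$ the signs $\eta_i:=\epsilon_{\varphi(i)}$ and $\eta'_i:=\epsilon_{\psi(i)}$ are i.i.d.\ Rademacher, the reduced labels $\overline{\varphi(i)},\overline{\psi(i)}$ i.i.d.\ uniform on $\{1,\dots,l\}$, and all of these are mutually independent; hence $\tau_{ij}=4\te^2 R_iR_j c_{ij}$ with $R_i:=\eta_i\eta'_i$ i.i.d.\ Rademacher and $c_{ij}:=A_{\overline{\varphi(i)}\,\overline{\varphi(j)}}\,A_{\overline{\psi(i)}\,\overline{\psi(j)}}$ satisfying $|c_{ij}|\le1$. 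Conditioning on the reduced labels and expanding the product,
\[
\mathbb{E}_R\prod_{i<j}\bigl(1+4\te^2 c_{ij}R_iR_j\bigr)=\sum_{S}(4\te^2)^{|S|}\Bigl(\prod_{(i,j)\in S}c_{ij}\Bigr)\,\mathbb{E}_R\prod_{(i,j)\in S}R_iR_j ,
\]
and $\mathbb{E}_R\prod_{(i,j)\in S}R_iR_j$ equals $1$ if every vertex of the edge set $S$ has even degree and $0$ otherwise; discarding the factors $|\prod_{(i,j)\in S}c_{ij}|\le1$ bounds the left-hand side by the \emph{unweighted} quantity $\mathbb{E}_R\prod_{i<j}(1+4\te^2 R_iR_j)\le\mathbb{E}_R\exp(4\te^2 Z_n)\le\mathbb{E}_R\exp(4\te^2|Z_n|)$ with $Z_n=\sum_{i<j}R_iR_j$. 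This bound is uniform over the reduced labels, so it survives the outer expectation over them, and one is back exactly in the situation of Theorem~\ref{thm-twocl}: by the Rademacher-chaos estimate (Lemma~\ref{lemcaos}), choosing $4\te_n^2=1/(\lambda n)$ makes $\mathbb{E}_R\exp(4\te_n^2|Z_n|)\le1+\varepsilon$, whence $\|\mathbb{P}-\mathbb{Q}^{\te_n}\|_1^2\le\varepsilon\le1/4$, and Lemma~\ref{lem-lc} delivers the claimed $c_3/n$ bound.

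The main obstacle is the arbitrariness of $A$: the natural reduction produces the \emph{weighted} Rademacher chaos $\sum_{i<j}c_{ij}R_iR_j$ rather than the clean $Z_n$. The observation that $|c_{ij}|\le1$ together with the even-subgraph expansion above is what lets one dominate the weighted product by the unweighted one and so reuse Lemma~\ref{lemcaos} verbatim, with no need for a weighted (Hanson--Wright type) chaos bound. The only other point of care is the degenerate case $A=0$, where $Q^\te$ does not depend on $\te$ and the lower bound holds trivially.
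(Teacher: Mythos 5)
Your proposal is correct and follows the paper's proof in all essentials: the same Le Cam point-versus-mixture reduction, the same $\chi^2$/Lemma~\ref{lem-tv} bound, the same factorization $B_{ab}=\epsilon_a\epsilon_b A_{\bar a\bar b}$ with the sign variables independent of the reduced labels, and the same Rademacher-chaos estimate at scale $\te_n^2\asymp 1/n$. The only (valid) variation is that you dominate the weighted chaos $\sum_{i<j}c_{ij}R_iR_j$ by the unweighted one via the even-subgraph expansion, whereas the paper conditions on the weights $c_{ij}$ and applies Lemma~\ref{lemcaos} directly in its weighted form with $s(Y)^2=\sum_{i<j}c_{ij}^2\le\binom{n}{2}$.
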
  

\begin{proof}[Proof of Lemma \ref{lem-kclasses}]
Let $\mathbb{Q} = P_{e_k,Q^\te}$ be as above. That is, 
\[  \mathbb Q = \frac1{k^n} \sum_{\vphi\in [k]^n}
P_{Z_\vphi^\theta}, \qquad P_{Z_\vphi^\te} = \bigotimes_{i<j}  \text{Be}(Q^{\theta}_{\vphi(i)\vphi(j)}), \]
with $\{Z_\vphi^\te\}$ the matrix of general term 
$z_{ij}(\vphi,\theta) = Q^{\theta}_{\vphi(i)\vphi(j)}$,  for $\vphi$ ranging over the set $[k]^n$.
Let $\mathbb{P}$ denote the Erd\"os-Renyi $ER(1/2)$ distribution over $n$ 
nodes, which also corresponds to $P_{e_k,Q^\te}$ for $\te=0$.
Consider the functional $\psi$ defined as, 
\[ \psi(P_{e_k,Q^\te})=\te. \]
By definition, for any $\vphi\in [k]^n$, we have $\psi(\mathbb{P})=\psi(P_{e_k,Q^0})=0$ and 
$\psi(\mathbb{Q})=\psi(P_{e_k,Q^\te})=\theta$. 
The same computation as in the proof of Theorem \ref{thm-twocl}  now shows that, for $B$ given in the  display below,
\[ \|\mathbb{P}-\mathbb{Q}\|_1^2 \le \frac{1}{k^{2n}} \sum_{\vphi,\psi \in [k]^n}  
\exp\Big\{ 4\theta^2 \sum_{i<j} B_{\vphi(i)\vphi(j)} B_{\psi(i)\psi(j)} \Big\} - 1. \]
The last term in the bound can be interpreted as an expectation over $\vphi, \psi$, where both variables are sampled uniformly from the set of all 
mappings from $\{1,\ldots,n\}$ to $\{1,\ldots,k\}$. Recall that $l=k/2$ 
and for any integer $s$, denote by $[s]_l$ the integer in  $\{1,\ldots,l\}$ that equals $s$ modulo $l$, plus $1$. 
The variable $B_{\vphi(i)\vphi(j)}$ can be written
\[ B_{\vphi(i)\vphi(j)} = (-1)^{\vphi(i)>l} (-1)^{\vphi(j)>l} 
A_{[\vphi(i)]_l[\vphi(j)]_l}. \]
When $\vphi$ follows the uniform distribution over $[k]^n$, the variables 
$(\vphi(i))_i$ are independent and are marginally  uniform over $[k]$. Also, the variables $((-1)^{\vphi(i)>l})_i$ and 
$([\vphi(i)]_l)_i$ are independent under the uniform distribution for $\vphi$ as we show next. If $\text{Pr}$ denotes the corresponding distribution, then
for any ${i\leq n}$ and any $s\leq l$ 
\[ \text{Pr}\Big[ \vphi(i)>l, [\vphi(i)]_l=s\Big] = \frac1k=\frac12\cdot \frac1l =\text{Pr}[ \vphi(i)>l]  \text{Pr}[\vphi(i)]_l=s].  \]
Note the identity holds both for $k\le n$ and $k>n$. 
Set   $R_i:=(-1)^{\vphi(i)>l}$ and $a_{ij}:=A_{[\vphi(i)]_l[\vphi(j)]_l}$. Deduce from the previous reasoning that the variables $(R_i)_i$  and $(a_{ij})_{i<j}$ are independent. Now, denoting by $E$ the expectation under $\text{Pr}$,
\begin{align*}
\|\mathbb{P}-\mathbb{Q}\|_1^2 & \le E\Bigg[  
\exp\Big\{ 4\theta^2 \sum_{i<j} B_{\vphi(i)\vphi(j)} B_{\psi(i)\psi(j)} \Big\}\Bigg] - 1\\
& \le E\Bigg[ E\Bigg[ 
\exp\Big\{ 4\theta^2 \sum_{i<j} a_{ij} R_iR_j  \Big\}\Bigg\vert a_{ij}\Bigg]\Bigg] - 1.
\end{align*}
As $(R_i)_i$  and $(a_{ij})_{i<j}$ are independent, one can compute the inner expectation in the last display under the distribution of $(R_i)_i$, 
the $a_{ij}$'s being fixed. The $(R_i)_i$ form a sample of independent Rademacher variables, hence 
\[Z_n:=\sum_{i<j} a_{ij} R_iR_j \] is a Rademacher chaos of order $2$ with weights $(a_{ij})$. 
Suppose the matrix $A$ is not identically zero (otherwise the bound below 
holds trivially). By Lemma \ref{lemcaos}, for any $c>1$ one can find $\la>0$ with
\[ E\exp\Bigl[ \frac{|Z_n|}{\la \|Z_n\|_2} \,\Big\vert\, a_{ij} \Bigr] \le c \qquad\text{ where }\qquad
\|Z_n\|^2_2=E[|Z_n|^2\given a_{ij}]=\sum_{i<j} a_{ij}^2.\] 
Choose $c=3/2$. By definition, all $a_{ij}$s are bounded by $1$. There is hence a $\la>0$ such that, if $\te^2=2/(\la n)$,
\[ E\exp\bigl[ 4\te^2 |Z_n|\ \,\big\vert\, a_{ij} \bigr]-1 \le 3/2-1=1/2. \] 
The result now follows  from an application of Lemma \ref{lem-lc} to the functional $\psi$.
\end{proof}

\subsection{Proof of the theorem}

\begin{proof}[Proof of Theorem \ref{thm-quad}]
Let us recall the definition, for any $0\le \theta\le 1$, of the function 
$w=w_\te$ in \eqref{polyw}
\[ w_\theta(x,y)=\frac12-\theta(x-\frac12)(y-\frac12)\] 
and let $\psg w_\te \psd$ be its graphon equivalence class. By definition, $\psg w_\te \psd$ belongs to $\cP$. One has
\[ \int_{[0,1]^2} w_\theta(x,y)^2 dx dy = \frac14 + c^2\theta^2, \]
for some constant $c>0$, so that $\ta(\psg w_\theta \psd)-\ta(\psg w_0 \psd)=c\theta$.
 The function $w_0$ is the constant $1/2$, and 
the density of the data distribution $P_{\psg w \psd}$  with respect to counting measure on $\{0,1\}^{n(n-1)/2}$ is
\[ p_w(\{x_{ij}\}_{i<j}) = \int \cdots \int  \prod_{i<j}\text{Be}(w(u_i,u_j))(x_{ij}) du_1\cdots du_n,  \]
where, for any $z$ in $[0,1]$ and $x_{ij}$ in $\{0,1\}$, we have set
\[ \text{Be}(z)(x_{ij}) = z^{x_{ij}} (1-z)^{1-x_{ij}}. \]
Next one 
shows that $P_{\psg w \psd}$ is close in the total variation sense to a discrete mixture of the previous Bernoulli-probability distributions, provided the number of points in the mixture is suitably large. 
To do so, we approximate the function $P_n$ defined by
\[ P_n: (u_1,\ldots,u_n) \mapsto \prod_{i<j}\text{Be}(w(u_i,u_j))(x_{ij}), \] 
by a piecewise constant function $h_{N,\theta}=h_N$, where $[0,1]^n$ is 
split into $N^n$ blocks, $N\ge 1$, using a regular grid of $[0,1]^n$ with 
points $(i_1/N,\ldots,i_n/N)$ and $0\le i_j \le N$ for all $j$. To do so, 
one just replaces $w(u_i,u_j)$ by, say, the value of $w$ on the 
middle of the 
 block the point $(u_i,u_j)$ belongs to. This defines a function 
\[ Q_{n,N}: (u_1,\ldots,u_n) \mapsto \prod_{i<j}\text{Be}(\bar w(u_i,u_j))(x_{ij}),  \]
where $\bar w$ is constant on every block of the subdivision.  
Let $Q_w^N$ denote the corresponding measure, with density
\[ q_w^N(\{x_{ij}\}_{i<j}) = \int \cdots \int  \prod_{i<j}\text{Be}(\bar w(u_i,u_j))(x_{ij}) du_1\cdots du_n.  \]
Taking $w=w_\theta$ as above,  the function $P_n$ is a polynomial in $u_1,\ldots,u_n$, and its degree with respect to each variable $u_i$ is $n-1$.  The partial derivatives of $P_n$ can be computed, and each of them can be seen to be bounded by ${n-1}$: For each variable, only $n-1$ non-zero terms appear when evaluating the partial derivative, and each term is uniformly bounded by $1$. Consequently, if $(u_1,\ldots,u_n)$ and $(u_1',\ldots,u_n')$ belong to the same block,
\[ |P_{n} (u_1,\ldots,u_n) - P_{n} (u_1',\ldots,u'_n)|\le 
(n-1)\sum_{i=1}^n |u_i-u_i'|\le n^2/N.\]
For $w=w_\te$ as above, we can thus bound the total variation distance as
\begin{align*}
 \| P_{\psg w \psd} - Q_w^N \|_1 & \le \sum_{{x\in\{0,1\}^{\frac{n(n-1)}{2}}} } |p_w-q_w^N|(x)\\
 & \le n^2 \max_{x\in\{0,1\}^{\frac{n(n-1)}{2}}} |p_w-q_w^N|(x)\le n^2 (n^2/N) = n^4/N.
\end{align*}
Each probability measure $Q_w^N$ is a mixture of $N^n$ distributions, each of which in turn corresponds to a block in the subdivision of $[0,1]^n$. One can rewrite
\[ Q_w^N = \frac{1}{N^n} \sum_{\vphi\in[N]^n}
\bigotimes_{i<j} \text{Be}(M_{\vphi(i)\vphi(j)}), \]
where the matrix $M=(M_{pq})_{1\le p,q \le N}$ is the symmetric matrix with terms
\[ M_{pq}=w_\theta\left(\frac{p-\frac12}{N},\frac{q-\frac12}{N}\right).\]  
If $N$ is even, which one can assume without loss of generality, the matrix $M$ is exactly of the same form as $Q$ in \eqref{defQ}, with elements in $(0,1)$, so one can use the bound in $\|\cdot\|_1$-distance between measures obtained in the proof of Lemma \ref{lem-kclasses}. Note that the argument remains valid even if the number of classes exceeds the number of 
observations $n$, which will be of importance below. For a small constant 
$c$ and $\theta^2=\kappa/n$, we obtain 
\[ \| Q_{w_\theta}^N - Q_{w_0}^N\|_1 \le c \]
for $\kappa$ sufficiently small. Choosing $N=Cn^4$, for $C>0$ large enough, leads to
\begin{align*}
\| P_{\psg w_\te \psd} - P_{\psg w_0 \psd} \|_1 & \le \| P_{\psg w_\te \psd} - Q_{w_\te}^N \|_1 + \|Q_{w_\te}^N - Q_{w_0}^N\|_1
+ \|Q_{w_0}^N-P_{\psg w_0 \psd}\|_1\\
& \le n^4/N + c +0 \le c'<1/2.
\end{align*}
An application of Lemma \ref{lem-lc} with the functional $\psi(P_{\psg w\psd}):=\psi(\psg w\psd)$ concludes the proof of the lower bound in Theorem \ref{thm-quad} in the case where $\psi(\cdot)=\ta(\cdot)$. The lower bound for a general $\psi$ follows by the same proof, noting that the specific form of the functional only comes in through the difference $\psi(\psg w_\te \psd)-\psi(\psg w_0\psd)$, which behaves as for $\ta(\cdot)$ by assumption.

For the upper-bound, we first
link the squared distance to the truth for the functional to the squared $L^2$-distance of corresponding graphons. Let $w,w_1$ be two fixed graphon functions, and suppose that at least one of these is non constant (almost everywhere), which means that either $\ta(\psg w\psd)>0$ or $\ta(\psg w_1\psd)>0$. Then, writing simply $\int$ to denote the double integral on 
$[0,1]^2$,
\begin{align*}
\ta(\psg w_1 \psd) - \ta(\psg w \psd) & = \left\{\int (w_1-\int w_1)^2\right\}^{1/2}
-  \left\{\int (w-\int w)^2\right\}^{1/2} \\
& =  \frac{\int  (w_1-\int w_1)^2 - \int (w-\int w)^2  }{ \left\{\int (w_1-\int w_1)^2\right\}^{1/2} +
 \left\{\int (w-\int w)^2\right\}^{1/2}},
 \end{align*}
where the denominator is nonzero by assumption on $w,w_1$; we henceforth denote it $c$. Then
\begin{align*}
\ta(&\psg w_1 \psd) - \ta(\psg w \psd)  \le c^{-1}\int \left\{ w_1-w - \int (w_1-w)\right\}\left\{ w_1+w - \int (w_1+w)\right\}\\
& \le c^{-1}\left[\int  \left\{w_1-w - \int (w_1-w)\right\}^2\right]^{1/2} 
\left[\int  \left\{w_1+w - \int (w_1+w)\right\}^2\right]^{1/2}.
\end{align*}
The two factors in brackets are bounded as follows: For the second term,
apply the inequality $(a+b)^2\le 2 a^2 + 2 b^2$, followed by $\sqrt{u+v}\le \sqrt{u}+\sqrt{v}$, which yields
\begin{align*}
&\left[\int  \left\{w_1+w - \int (w_1+w)\right\}^2\right]^{1/2}  \\
& \qquad \le 
\sqrt{2} \left[\int  (w_1-\int w_1)^2 +\int  (w - \int w)^2 \right]^{1/2}\le \sqrt{2} c.
\end{align*}
For the first term, use $0\le \int (g-\int g)^2\le \int g^2$ for a bounded measurable $g$, as one integrates over $[0,1]^2$. That yields
\[ \left\{\ta(\psg w_1 \psd) - \ta(\psg w \psd)\right\}^2  \le 2\int (w_1-w)^2,\] and this inequality clearly still holds true in case $\ta(\psg w 
\psd)=\ta(\psg w_1 \psd)=0$. One concludes that
\begin{align*}
  &\left\{\ta(\psg w_1 \psd) - \ta(\psg w \psd)\right\}^2  \\
 & \qquad  \le 2
 \inf_{T\in\cT} \int\int_{[0,1]^2} \left|w_1(T(x),T(y)) - w(x,y) \right|^2dxdy=:\delta^2(w_1,w), 
 \end{align*}
 where $\cT$ is the set of all measure-preserving bijections of $[0,1]$. Indeed, the previous inequalities hold true for any choice of representer 
of the graphon $w_1$, so one can take the infimum over $\cT$ in the previous bounds. By Corollary 3.6 of \cite{kvt16}, for data $X$ generated from 
$P_w$, there exists an estimator $\hat w=\hat w(X)$ that satisfies $E_{P_w}[\delta^2(\hat w,w)]\le C(\log{n}/n)$. Since $\psg w \psd$ has a representer that belongs to $\cP_B$ by assumption, it belongs in particular to the H\"older class $\Sigma(1,L)$, provided $L$ is chosen large enough. For the plug-in estimator $\hat\ta(X):=\ta(\hat w)$, combining the previous result with the last display implies 
 \[ E\left[\{\hat\ta(X)-\ta(\psg w \psd)\}^2\right]\le C\frac{\log{n}}{n},\]
 for $C$ large enough depending only on $\cP_B$, which concludes the proof.
\end{proof}

\section*{Acknowledgements}
I. C. is very grateful for the hospitality of Columbia's statistics department, where parts of this work where carried out. I. C.'s work is supported by ANR-17-CE40-0001 (BASICS).

\bibliographystyle{plainnat}
\bibliography{grbib}

\newpage

\appendix

\section{Upper bounds computable in polynomial-time}
\label{sec:spec:k}

This section generalizes the polynomial-time estimate in Section
\ref{sec:spec:2} to ${k\geq 2}$ classes, by combining the 
spectral clustering method of \citet{Lei:Rinaldo:2015:1} with a
refinement due to \citet{Lei:Zhu:2017:1}. The latter is based on a
sample splitting, and under appropriate conditions on the connectivity
matrix recovers the labels {\em exactly}, with high probability.
Theorem \ref{thmspec} below shows that, under additional conditions, 
this polynomial-time estimator achieves the minimax rate.

\subsection{Spectral estimation for ${k\geq 2}$ classes}
Recall the assumed form of the connectivity matrix $M^\te$ in \eqref{zte}. The conditions of the next results are in terms of an `aggregated' $(k-1)\times (k-1)$ matrix $N$ obtained from $M^\theta$ by merging the first and second row/columns when $\te=0$, that is
\[ N = \begin{bmatrix}
     1/2 &  a_1 & \cdots & a_{k-2} \\
     a_1  & b_{11} & \cdots & b_{1 k-2}  \\
  \vdots & \vdots & & \vdots \\
 a_{k-2} & b_{1 k-2} & \cdots & b_{k-2 k-2}
\end{bmatrix}.\]
Recall that $\vphi$ denotes the true labelling map.
Define a labelling $\psi:[n]\to[k-1]$ by $\psi(v)=1$ if $\vphi(v)\in\{1,2\}$ and
$\psi(v)=\vphi(v)-1$ if $v\in \{3,\ldots,k\}$. That is, we `aggregate'
nodes of label $1$ or $2$ in one class and renumber the remaining
labels so that the label set is, now, $[k-1]$. Following
\cite{Lei:Zhu:2017:1}, we write $g_v=\psi(v)$ for the true (aggregated) label of node $v\in[n]$ and $\cI^{(l)}=\{v\in[n]:\ g_v=l\}$.

The algorithm {\tt Spec-$\theta$} specified in the frame below has three steps. First, one runs the exact label recovery algorithm {\tt V-Clust} of \citet{Lei:Zhu:2017:1} for $K=k-1$ classes. 
Under some conditions on the matrix $N$, see (A1)--(A2) below, this finds 
the `aggregate' labels $\psi$ above {\em up to label permutation} with high probability. Then the aim is to recover the aggregated class with original labels $1$ and $2$. Due to the label switching issue, this requires some extra condition on $N$. For simplicity (see also comments below) we assume in (A3) that the diagonal terms $b_{ii}$ are separated from $1/2$, 
which enables to estimate the aggregated class label $1$ by comparing diagonal empirical connectivities to $1/2$. Finally, in a third step one  can run the spectral algorithm $\cS_2$ from Section \ref{sec2} on the nodes 
found at the previous step. 
\vp

\fbox{\begin{minipage}{0.9\textwidth}
   \begin{center}
{\tt   Algorithm: Spectral method for estimation of $\theta$ (Spec-$\theta$)}
   \end{center}
{\bf Input:} adjacency matrix $X$ (where we set $X_{ii}=0$), number of classes $k$\\

{\bf Subroutines:}  {\tt V-Clust} (Lei-Zhu), Initial community recovery $\cS$ (Lei-Rinaldo), Spectral algorithm $\cS_2$ for $k=2$ (Section \ref{sec2})
   \begin{enumerate}
      \item Apply {\tt V-Clust} on adjacency matrix $X$ using $k-1$ classes, $\cS$ and $V=2$
      \[ \hat g = \text{{\tt V-Clust}}(X,k-1,V,\cS). \] 
      \item Set 
       $\hat\cI^{(1)}=\{v\in[n]:\ \hat g_v=\hat \ell\,\}$, where 
      \[ \hat \ell \, = \,\underset{l\in[k-1]}{\text{argmin}}\
      \Bigg|\,
      \frac{1}{{{|\hat{g}^{-1}(l)|} \choose {2}}}\sum_{i<j,\, i,j\in \hat{g}^{-1}(l)} X_{ij}\,
       -\,\frac12
      \,\Bigg|
        \]
      \item Run spectral algorithm $\cS_2$ for $k=2$ on corresponding nodes and set
      \[ \hat\theta = \cS_2(X^{\hat \cI^{(1)}}),\]
      where $X^{\hat \cI^{(1)}}$ is the induced adjacency matrix over nodes in $\hat \cI^{(1)}$.
  \end{enumerate}
\end{minipage}}
\vp
 
We set $K=k-1$ and assume that, for a large enough universal constant $C$:
\begin{enumerate}
\item[(A1)] $N$ is full rank
and any two rows of $N$ are separated by at least $\ga=\ga(K)>0$ in $\ell_2$-norm.
\item[(A2)] For $\la=\la(K)$ the smallest absolute eigenvalue of $N$,
\begin{equation*}
n \la(K) \ga(K)\ge CK^{4.5},\quad n\ga(K)^2\ge CK^3\log{n},\quad
n\ge CK^3.
\end{equation*}
\item[(A3)] For all $i\in\{1,\ldots,k-2\}$,
\[ |b_{ii}-1/2|\ge \kappa, \]
where $\kappa=\kappa(K)\ge C\sqrt{K(\log{n})/n}$.
\end{enumerate}
Comments on (A1)--(A3) follow below. For a version for sparse graphs, see 
Appendix \ref{sec_spa}.

\begin{thm} \label{thmspec}
In the fixed design SBM model with $k$ classes, under the assumptions (A1)--(A3),
let us set, for $c$ a small enough universal constant and $K=k-1$,
\begin{equation} \label{def_tk}
T_K := c \frac{\la(K) \ga(K)^{1/2}}{K^{5/4}} \wedge \frac{\kappa}{4}.
\end{equation}
Then the obtained $\hat \theta$ from algorithm {\tt Spec-$\theta$} satisfies, for $C_3$ a  large enough constant,
\[ \sup_{|\te|\le T_K,\, \vphi\in\Sigma_e} E_{\te,\vphi}\left[ \hat\te-\te\right]^2 \le C_3\frac{k}{n}. \]
\end{thm}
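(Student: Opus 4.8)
The plan is to decompose the squared risk according to the (high-probability) event on which the algorithm succeeds in its label-recovery steps, and on the complementary event use the trivial bound $(\hat\te-\te)^2\le 1$ together with a small probability. Write $\mathcal E$ for the event that steps~1--2 of {\tt Spec-$\theta$} correctly identify, up to nothing further, the aggregate class $\mathcal I^{(1)}$ corresponding to original labels $\{1,2\}$; that is, $\hat{\mathcal I}^{(1)}=\mathcal I^{(1)}=\vphi^{-1}(\{1,2\})$. Then
\[
E_{\te,\vphi}\bigl[\hat\te-\te\bigr]^2
= E_{\te,\vphi}\bigl[(\hat\te-\te)^2\ind_{\mathcal E}\bigr]
+ E_{\te,\vphi}\bigl[(\hat\te-\te)^2\ind_{\mathcal E^c}\bigr]
\le E_{\te,\vphi}\bigl[(\hat\te-\te)^2\ind_{\mathcal E}\bigr] + P_{\te,\vphi}(\mathcal E^c).
\]
So the two tasks are: (i) show $P_{\te,\vphi}(\mathcal E^c)\le C k/n$ (in fact one expects a much smaller, polynomially decaying bound, e.g. $O(n^{-c})$ or $O(k/n)$, which suffices); and (ii) on $\mathcal E$, control $(\hat\te-\te)^2$ by applying Theorem~\ref{thm_ubspec} to the induced subgraph on the $|\mathcal I^{(1)}|\asymp n/k$ vertices of the aggregated class.

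For (ii): on $\mathcal E$, the induced adjacency matrix $X^{\hat{\mathcal I}^{(1)}}=X^{\mathcal I^{(1)}}$ is exactly the adjacency matrix of a $2$-class SBM on $m:=|\mathcal I^{(1)}|$ vertices with connectivity matrix $Q^\te$ and fixed labelling $\vphi$ restricted to $\mathcal I^{(1)}$ — because, by construction of $M^\te$ in \eqref{zte}, the block indexed by classes $1$ and $2$ is precisely $Q^\te$, and conditionally on $\vphi$ the entries $X_{ij}$, $i<j\in\mathcal I^{(1)}$, are independent $\mathrm{Be}(M^\te_{\vphi(i)\vphi(j)})$. Hence $\cS_2$ applied to it is exactly the estimator of Theorem~\ref{thm_ubspec} in the fixed-design two-class model on $m$ vertices, and that theorem gives $E[(\hat\te-\te)^2\mid \mathcal I^{(1)},\mathcal E\text{-data}]\le C/m$. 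Since $\vphi\in\Sigma_e$ forces $m=|\vphi^{-1}(1)|+|\vphi^{-1}(2)|\ge 2c_1 n/k$, this is $\le C k/(2c_1 n)=O(k/n)$, as desired. A minor subtlety here is that $\mathcal E$ is measurable with respect to the data used by {\tt V-Clust}; since {\tt V-Clust} relies on a sample-splitting scheme, one should arrange (or note that the Lei–Zhu construction already arranges) that the edges internal to $\mathcal I^{(1)}$ used by $\cS_2$ in step~3 are (conditionally) independent of the edges used to decide $\mathcal E$ and $\hat\ell$ — or, more simply, bound $E[(\hat\te-\te)^2\ind_{\mathcal E}]\le \sup_{I:\,|I|\ge 2c_1 n/k} E_{\te,\vphi}[(\cS_2(X^{I})-\te)^2]$, which holds deterministically once $\hat{\mathcal I}^{(1)}=I$ is one of these sets, and then apply Theorem~\ref{thm_ubspec} for each fixed $I$. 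I would write it the latter way to sidestep the conditioning issue entirely.

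For (i), the work is in verifying that assumptions (A1)--(A3), together with $|\te|\le T_K$ and $\vphi\in\Sigma_e$, put us in the regime where {\tt V-Clust} of \citet{Lei:Zhu:2017:1} recovers the $K=k-1$ aggregated labels $\psi$ (equivalently $g$) exactly up to a global permutation with probability $\ge 1-O(n^{-c})$, and then that step~2 correctly pins down which of the $K$ recovered classes is the aggregate $\{1,2\}$-class. The first part is essentially a citation: (A1)--(A2) are tailored to match the separation/eigenvalue hypotheses of the Lei–Zhu exact-recovery theorem for the matrix $N$, and one must check that perturbing $N$ to $M^\te$ with $|\te|\le T_K$ — which is how the true model differs from the "collapsed" $(k-1)$-class model with connectivity $N$ — does not break those hypotheses: the choice of $T_K$ in \eqref{def_tk} as a small multiple of $\la(K)\ga(K)^{1/2}K^{-5/4}$ is exactly what guarantees the eigenvalue gap and the row-separation of the effective connectivity matrix degrade by at most a constant factor, so Lei–Zhu still applies (with $\Sigma_e$ giving the required balancedness of class sizes). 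For step~2: on the exact-recovery event, for each recovered class $l$ the empirical within-class mean $\binom{|\hat g^{-1}(l)|}{2}^{-1}\sum_{i<j\in\hat g^{-1}(l)}X_{ij}$ concentrates around the true within-aggregate-class connectivity, which is $1/2+O(\te)$ for the $\{1,2\}$-class (since $Q^\te$ has all entries within $\te$ of $1/2$, so the aggregated mean is within $|\te|\le\kappa/4$ of $1/2$) and is $b_{ii}$ for class $i+1$; by (A3), $|b_{ii}-1/2|\ge\kappa\ge C\sqrt{K\log n/n}$, while the fluctuation of each empirical mean over a class of size $\asymp n/k$ is $O(\sqrt{k\log n/n})$ with probability $\ge 1-O(n^{-c})$. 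A union bound over the $K$ classes then shows $\hat\ell$ picks the $\{1,2\}$-class, so $\mathcal E$ holds, with failure probability $O(Kn^{-c})=O(n^{-c'})$, which is $\le Ck/n$ for $n$ large (and the constant $C_3$ absorbs the rest).

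The main obstacle is (i), and specifically the perturbation argument: one has to trace through the Lei–Zhu analysis carefully enough to confirm that replacing the idealized connectivity matrix $N$ by the true $M^\te$ — which has one extra, nearly-degenerate class split off from the first — only costs constants, and to see that this is exactly what forces the threshold $|\te|\le T_K$ in \eqref{def_tk}; the concentration inequalities in (i) and the reduction in (ii) are comparatively routine given Theorem~\ref{thm_ubspec} and standard Bernstein/matrix-concentration bounds.
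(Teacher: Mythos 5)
Your overall architecture is the same as the paper's: (a) exact recovery of the $K=k-1$ aggregated labels by {\tt V-Clust} despite the $\te$-perturbation, (b) identification of the merged class via (A3) and a Bernstein bound, (c) application of the two-class spectral estimator to the recovered $\asymp n/k$ nodes, with the complement of the good event contributing $O(1/n)\le O(k/n)$ to the risk. Parts (b) and (c) match the paper essentially verbatim. Your worry about conditioning in (c) in fact dissolves in the fixed-design setting: on $\cE$ one has $\hat\cI^{(1)}=\vphi^{-1}(\{1,2\})$, a deterministic set, so $E[(\hat\te-\te)^2\ind_{\cE}]\le E[(\cS_2(X^{\cI^{(1)}})-\te)^2]$ and Theorem \ref{thm_ubspec} applies directly; by contrast, your proposed supremum over all $I$ with $|I|\geqa n/k$ does not work as stated, since for a generic such $I$ the induced subgraph is not a two-class SBM with connectivity $Q^\te$.

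The genuine gap is in (a), and it is not merely an omitted computation: the mechanism you assert --- that $|\te|\le T_K$ keeps the eigenvalue gap and row separation of the ``effective connectivity matrix'' within constant factors --- is not how the argument goes and does not explain the exponents in \eqref{def_tk}. The paper writes $E[X]=\al_n SNS^t+\al_n\te TRT^t+D$ and absorbs the rank-one perturbation $\al_n\te TRT^t$ (of spectral norm $\asymp \al_n|\te|\, n/K$) into the noise term $W$ of the Lei--Rinaldo analysis. This yields a misclassification fraction $n/f(n\al_n,K)$ with $f=C\bigl(n\al_n\la^2/K^2\wedge \la^2/\te^2\bigr)$; the Lei--Zhu exact-recovery theorem requires $f\,\ga(K)\ge CK^{2.5}$, whose second branch forces $\te^2\le C\la^2\ga(K)/K^{2.5}$ --- this is precisely where $T_K\asymp\la(K)\ga(K)^{1/2}K^{-5/4}$ comes from. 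One must additionally check that $\te$ does not break the three Bernstein inequalities inside Lei--Zhu's proof (their terms $T_1,T_2,T_3$ and the consistency of {\tt Merge}), which impose $(n_1+n_2)\al_n|\te|\leqa \ga(K)n\al_n/K^{3/2}$, i.e.\ $|\te|\leqa\ga(K)/\sqrt{K}$, and verify that this is implied by \eqref{def_tk} via $\la(K)\le\ga(K)/\sqrt{2}$ and $\ga(K)\le\sqrt{K}$. Without tracing $\te$ through these bounds, the specific threshold $T_K$ is unmotivated and the claim that Lei--Zhu ``still applies'' is unsupported; this perturbation analysis is the substance of the proof.
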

\begin{proof}
  This is a special case of Theorem \ref{thm-kcl-spa}, in
  Appendix \ref{sec_spa} below.
\end{proof}

The algorithm {\tt Spec-$\theta$}, unlike the likelihood method considered below, only uses the fact that the connectivity matrix is of the form $M^\theta$, but does not use specific knowledge of the vector $a$ and matrix $B$ to compute $\theta$.\\[.5em]
{\noindent\bf Comments on the assumptions}.
Conditions (A1) and (A2) are typical for spectral methods; their specific 
form is that assumed by \citet{Lei:Zhu:2017:1}, with  the initial recovery algorithm being that of \citet{Lei:Rinaldo:2015:1}. If  $K$ is fixed independently of $n$, then (A2) follows from (A1) if $n$ is large enough. 
Condition (A3) is specific to our problem, and assumed in this form
only for simplicity of exposition: To identify the special cluster
arising from the $1/2$ coefficient in the matrix $N$ (step 2. in {\tt
  Spec-$\te$}), some identifiability condition is needed, because even
the refined spectral clustering algorithm of \cite{Lei:Zhu:2017:1} can
only recover the original labels up to a permutation. Condition (A3)
is similar in spirit to condition \eqref{techm}, but weaker.
It can be replaced with any 
other condition that ensures cluster $1$ can be identified from a  noisy, 
permuted version of $N$ (with noise amplitude going to zero fast, as $k/n$). Note that, if $k$ is fixed and $n$ large enough, (A3) simply requires 
the diagonal terms of $B$ to differ from $1/2$.

Finally, a comment on $T_K$ in \eqref{def_tk}.  The label recovery in Steps 1--2 is run with $k-1$ classes, and hence joins two of the $k$ classes 
in the sample. The restriction on the range of $\te$ ensures the classes joined are the first two, with high probability.
Indeed, here we are interested in the situation where $\te$ may be small, 
which makes identification of labels difficult, and the rate slow; if $\te$ is large, the problem becomes easier. Again, note that if $k$ is fixed, the condition simply requires that $|\te|$ is smaller than a given constant.

\subsection{Simulation study}
\label{sec:simulation}

\begin{figure}
  \makebox[\textwidth][c]{
  \begin{tikzpicture}
    \begin{scope}[scale=3.5]
    \draw (0,0)--(.4,0)--(.4,-.4)--(0,-.4)--cycle;
    \draw (.2,0)--(.2,-.4);
    \draw (0,-.2)--(.4,-.2);
    \node at (.1,-.1) {\scriptsize $\frac{1}{2}\!+\!\theta$};
    \node at (.3,-.1) {\scriptsize $\frac{1}{2}\!-\!\theta$};
    \node at (.3,-.3) {\scriptsize $\frac{1}{2}\!+\!\theta$};
    \node at (.1,-.3) {\scriptsize $\frac{1}{2}\!-\!\theta$};
    \end{scope}
    \begin{scope}[xshift=4.2cm,yshift=-.7cm]
      \node at (0,0) {\includegraphics[height=3.2cm]{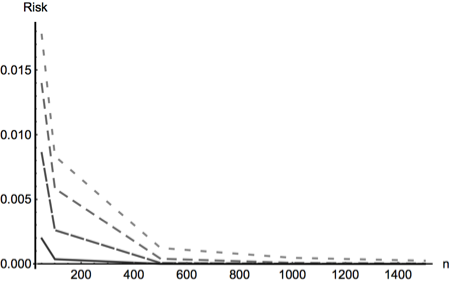}};
      \node at (5.3,0)
            {\includegraphics[height=3.2cm]{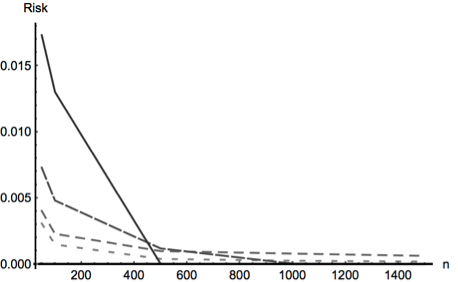}};
            \node at (7,1) {\includegraphics[height=1cm]{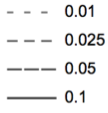}};
      \node at (0,-2) {\scriptsize (i) spectral estimator};
      \node at (5.3,-2) {\scriptsize (ii) sample splitting};
    \end{scope}    
  \end{tikzpicture}
  }
  \caption{Estimation of $\theta$ in the two-class case, using (i) the spectral estimator \eqref{te_spec} and 
    (ii) sample splitting. Graphs of size ${n=50,100,500,1000,1500}$ are generated from the graphon
    on the right, for ${\theta=0.01,0.025,0.05,0.1}$. Shown is the empirical risk (computed over
    1000 experiments) as a function of the sample size $n$.}
    \label{fig:k:2}
\end{figure}

Those estimators described above that are computationally feasible---the spectral and sample splitting estimators
for $k=2$, and the Spec-$\theta$ estimator for $k>2$---can be tested in 
simulation: Draw $n$ vertices from
a stochastic block model as in \eqref{modk} with a given value of $\theta$, compute the respective estimate, and report the 
empirical quadratic risk. Figure \ref{fig:k:2} shows how the risk develops as a function of sample size
for different values of $\theta$, for the two-community model \eqref{simat}.
For $k>2$ communities, the model is given by the connectivity matrix \eqref{zte}.
Simulation results for $k=5$, with ${a_1=\frac{1}{12}}$, ${a_2=\frac{11}{12}}$ and ${a_3=1}$, are shown in 
Figure \ref{fig:k:5}.  
As is visible in   Figures \ref{fig:k:2} and \ref{fig:k:5}, smaller values of $\theta$ correspond overall to a larger risk, and a much slower decay of the empirical risk curves. This illustrates our theoretical finding that there exists a range of parameters corresponding to two classes that become close where estimation is much slower. 

\begin{figure}
\makebox[\textwidth][c]{
  \begin{tikzpicture}
    \begin{scope}[scale=0.8, every node/.style={scale=0.8}]
    \begin{scope}[scale=3.5]
    \draw (0,0)--(1,0)--(1,-1)--(0,-1)--cycle;
    \draw (.2,0)--(.2,-1);
    \draw (.4,0)--(.4,-1);
    \draw (.6,0)--(.6,-1);
    \draw (.8,0)--(.8,-1);
    \draw (0,-.2)--(1,-.2);
    \draw (0,-.4)--(1,-.4);
    \draw (0,-.6)--(1,-.6);
    \draw (0,-.8)--(1,-.8);
    \node at (.1,-.1) {\scriptsize $\frac{1}{2}\!+\!\theta$};
    \node at (.3,-.1) {\scriptsize $\frac{1}{2}\!-\!\theta$};
    \node at (.3,-.3) {\scriptsize $\frac{1}{2}\!+\!\theta$};
    \node at (.1,-.3) {\scriptsize $\frac{1}{2}\!-\!\theta$};
    \node at (.1,-.5) {\scriptsize $\frac{1}{12}$};
    \node at (.3,-.5) {\scriptsize $\frac{1}{12}$};
    \node at (.5,-.5) {\scriptsize $\frac{1}{12}$};
    \node at (.5,-.3) {\scriptsize $\frac{1}{12}$};
    \node at (.5,-.1) {\scriptsize $\frac{1}{12}$};
    \node at (.7,-.1) {\scriptsize $\frac{11}{12}$};
    \node at (.7,-.3) {\scriptsize $\frac{11}{12}$};
    \node at (.7,-.5) {\scriptsize $\frac{11}{12}$};
    \node at (.7,-.7) {\scriptsize $\frac{11}{12}$};
    \node at (.5,-.7) {\scriptsize $\frac{11}{12}$};
    \node at (.3,-.7) {\scriptsize $\frac{11}{12}$};
    \node at (.1,-.7) {\scriptsize $\frac{11}{12}$};
    \node at (.9,-.1) {\scriptsize $1$};
    \node at (.9,-.3) {\scriptsize $1$};
    \node at (.9,-.5) {\scriptsize $1$};
    \node at (.9,-.7) {\scriptsize $1$};
    \node at (.9,-.9) {\scriptsize $1$};
    \node at (.7,-.9) {\scriptsize $1$};
    \node at (.5,-.9) {\scriptsize $1$};
    \node at (.3,-.9) {\scriptsize $1$};
    \node at (.1,-.9) {\scriptsize $1$};
    \end{scope}
    \end{scope}
    \begin{scope}[xshift=8.5cm,yshift=-1.4cm]
      \node at (0,0) {\includegraphics[height=3.2cm]{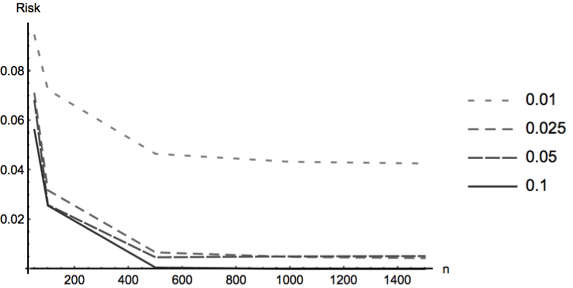}};
    \end{scope}
  \end{tikzpicture}
}
  \caption{Estimation of $\theta$ in the five-class case, using the Spec-$\theta$ algorithm. 
    Graphs of size ${n=50,100,500,1000,1500}$ are generated from the graphon
    on the right, for ${\theta=0.01,0.025,0.05,0.1}$. Shown is the empirical risk (computed over
    1000 experiments) as a function of the sample size $n$. For small values of $\theta$, convergence 
  slows visibly.}
  \label{fig:k:5}
\end{figure}

\section{Extension to sparse graphs} \label{sec_spa}

So far, we have for simplicity considered {\em dense} graphs, in the sense that at least some elements of the connectivity matrix (e.g. $1/2+\te$ or 
$1/2-\te$) are bounded away from zero.  

\subsection{Two classes}
An $\al_n$--sparse SBM model is generally defined as one in which the connectivity matrix $M$ can be written, for $\al_n$ a sequence going to $0$ with $n$, as $M = \al_n M_0$, 
for $M_0$ a nonnegative symmetric matrix with maximum entry $1$ \citep[e.g.][]{bickeletal11,Lei:Rinaldo:2015:1}. Here, we assume that the connectivity matrix is $M^\te(\al_n)$ with 
\begin{equation} \label{zte_spa}
M^\te(\al_n) = \al_n M^\te,
\end{equation}
and $M^\te$ as in \eqref{zte}. Then the largest coefficient of $M^{\te}$ is between $\al_n/2$ and $\al_n$, as the coefficients of the upper $2\times 2$ block are $\al_n(1/2\pm\te)$. We also set, for $\te\in[-1/2,1/2]$,
\begin{equation}
 Q^\te(\al_n)  = \al_n Q^\te, \qquad 
Q^\theta  \ = 
\begin{bmatrix}   
     \frac12 + \theta & \frac12 - \theta\\
     \frac12 - \theta  & \frac12 + \theta 
\end{bmatrix}. \label{simatspa}
\end{equation}
In constructing upper bounds below, we assume that for $C_s$ a large enough constant,
\[ (B0)\quad\qquad \al_n\ge C_s \frac{\log{n}}{n}, \]
as up to a constant $\log{n}/n$ is the typical boundary between the moderately sparse and very sparse situations, the later requiring different tools, see \cite{Lei:Rinaldo:2015:1}. For simplicity we also assume that $\al_n$ is known for the upper-bound results.
\begin{thm} \label{thm-twocl-spa}          
Consider a stochastic blockmodel \eqref{modsbm} with $k=2$ specified by 
$P_\te=P_{e,Q^\te(\al_n)}$ with $e,Q^\te(\al_n)$ given by  \eqref{prop2}-\eqref{simatspa}. There exists a constant $c_1>0$ such that for all $n\ge 2$,
\[ \inf_{T} \sup_{\te\in[-1/2,1/2]} E_{\te} \left[ T(X) - \theta \right]^2       
\ge c_1\left(1\wedge \frac{1}{n\al_n}\right), \]                          
where the infimum is taken over all estimators $T$ of $\te$ in the model $\cM$. 
Furthermore, if $\Delta_n=X-\al_nJ/2$, and $\la_1^a(\Delta_n)$ the largest absolute eigenvalue of $\Delta_n$, set $\tilde\te:=\la_1^a(\Delta_n)/\{(n-1)\al_n\}$. Then, under (B0), for some constant $C>0$ and $n\ge 2$, 
\[ \sup_{\te\in[-1/2,1/2]} E_\te[(\tilde\te-\te)^2] \le \frac{C}{n\al_n}. 
 \]
\end{thm}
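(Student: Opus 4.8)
The two claims extend, respectively, the lower bound of Theorem~\ref{thm-twocl} and the spectral upper bound of Theorem~\ref{thm_ubspec} to the sparse setting, and the plan is to rerun those two arguments while carrying the sparsity factor $\al_n$. For the lower bound I would apply Le Cam's two-point method with $\te_0=0$, so that $P_{\te_0}=ER(\al_n/2)$, and $\te_n=c_0\sqrt{1\wedge(n\al_n)^{-1}}$ for a small absolute constant $c_0$ (note $\te_n\le\tfrac12$), which reduces everything to showing that the total variation distance between $P_{\te_n}$ and $P_0$ stays below $\tfrac12$; this yields the lower bound $\gtrsim\te_n^2\asymp1\wedge(n\al_n)^{-1}$. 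Writing $\epsilon_{ij}(\vphi)=(-1)^{\1\{\vphi(i)\neq\vphi(j)\}}=v_iv_j$ with $v=v(\vphi)=((-1)^{\1\{\vphi(i)=1\}})_{i\le n}$, the measure $P_{\te_n}=\sum_{\vphi\in[2]^n}2^{-n}\bigotimes_{i<j}\text{Be}(\al_n(\tfrac12+\epsilon_{ij}(\vphi)\te_n))$ is a mixture over the $2^n$ labellings whereas $P_0$ is a single product measure, so I would control $\chi^2(P_{\te_n}\,\|\,P_0)$ by the second-moment identity
\[
1+\chi^2(P_{\te_n}\,\|\,P_0)=E_{\vphi,\vphi'}\Bigl[\ \prod_{i<j}E_0\bigl[L_\vphi(X_{ij})L_{\vphi'}(X_{ij})\bigr]\ \Bigr],
\]
with $\vphi,\vphi'$ independent uniform on $[2]^n$ and $L_\vphi$ the one-edge likelihood ratio against $\text{Be}(\al_n/2)$.

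A direct one-edge computation gives $E_0[L_\vphi L_{\vphi'}](X_{ij})=1+\tfrac{2\al_n}{1-\al_n/2}\te_n^2\,\epsilon_{ij}(\vphi)\epsilon_{ij}(\vphi')$, the linear term vanishing because $E_0L_\vphi=1$. Combining $1+x\le e^x$ with the algebraic identity $\sum_{i<j}\epsilon_{ij}(\vphi)\epsilon_{ij}(\vphi')=\sum_{i<j}v_iv_i'v_jv_j'=\tfrac12(W^2-n)$, where $W:=\sum_{i=1}^n v_iv_i'$ is a sum of $n$ i.i.d.\ Rademacher variables, I would bound $1+\chi^2\le E[\exp(\tfrac t2(W^2-n))]\le E[\exp(\tfrac t2W^2)]$ with $t:=\tfrac{2\al_n}{1-\al_n/2}\te_n^2\le4\al_n\te_n^2$, and then use the standard Gaussian-domination estimate $E[e^{sW^2}]\le(1-2ns)^{-1/2}$. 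Since $nt\le4c_0^2(n\al_n\wedge1)\le4c_0^2$, picking $c_0$ small makes $\chi^2$, and hence via $\mathrm{TV}\le\tfrac12\sqrt{\chi^2}$ the total variation, as small as desired. The same computation carried out conditionally on $\vphi$ yields the fixed-design analogue.

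For the upper bound I would argue conditionally on $\vphi$; the bound being uniform over $\vphi\in[2]^n$, it passes to the random-design mixture. With the convention $X_{ii}=\al_n/2$ (any other fixed diagonal changes $\Delta_n$ only on its diagonal, by an operator-norm-$O(\al_n)$ term, negligible under (B0)) one has $E[\Delta_n]=\al_n\te\,(vv^t-I_n)$, whose non-zero eigenvalues are $(n-1)\al_n\te$ (eigenvector $v$) and $-\al_n\te$ (multiplicity $n-1$), so its eigenvalue of largest modulus is exactly $(n-1)\al_n\te$. I would then invoke the standard operator-norm concentration for symmetric matrices with independent, centred, $[-1,1]$-valued above-diagonal entries of variance at most $\al_n$ — the bound used by \citet{Lei:Rinaldo:2015:1} — which under (B0), $\al_n\ge C_s(\log n)/n$ with $C_s$ large, gives $\|\Delta_n-E[\Delta_n]\|_{Sp}\le C_1\sqrt{n\al_n}$ on an event $\cE$ with $P(\cE^c)\le n^{-3}$. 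On $\cE$, Weyl's inequality shows that when $(n-2)\al_n|\te|>2C_1\sqrt{n\al_n}$ the quantity $\la_1^a(\Delta_n)$ is the eigenvalue nearest $(n-1)\al_n\te$, whence $|\tilde\te-\te|\le C_1\sqrt{n\al_n}/\{(n-1)\al_n\}\lesssim(n\al_n)^{-1/2}$; in the complementary regime $|\te|\lesssim(n\al_n)^{-1/2}$ and also $|\tilde\te|=\|\Delta_n\|_{Sp}/\{(n-1)\al_n\}\le|\te|+C_1\sqrt{n\al_n}/\{(n-1)\al_n\}\lesssim(n\al_n)^{-1/2}$, so $|\tilde\te-\te|\lesssim(n\al_n)^{-1/2}$ again. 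Off $\cE$ the crude bound $|\tilde\te|\le\{\|X\|_{Sp}+\al_n n/2\}/\{(n-1)\al_n\}\le4/\al_n$ keeps the contribution of $\cE^c$ to the risk at most $\asymp\al_n^{-2}n^{-3}\lesssim(n\al_n)^{-1}$ under (B0). Adding the two contributions gives the bound $C/(n\al_n)$, with the few small values of $n$ absorbed into $C$.

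\emph{Main obstacle.} The delicate ingredient is the sparse operator-norm concentration: the estimate $\|\Delta_n-E[\Delta_n]\|_{Sp}\lesssim\sqrt{n\al_n}$ is exactly the one that fails once $\al_n\ll(\log n)/n$ — hence assumption (B0); below that threshold one would have to work with a regularised adjacency matrix. A secondary nuisance is that $\la_1^a(\Delta_n)$ need not track the signal eigenvalue when $|\te|$ is small, which is handled by observing that in that regime both $\tilde\te$ and $\te$ are already of the optimal order $(n\al_n)^{-1/2}$. On the lower-bound side everything is routine once the mixture-versus-product $\chi^2$ is rewritten through the Rademacher moment generating function $E[e^{sW^2}]$, which is what makes the constant $c_0$ transparent.
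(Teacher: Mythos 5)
Your proposal is correct and follows essentially the same route as the paper: the lower bound is the point-versus-mixture Le Cam argument of Theorem \ref{thm-twocl} rerun with the $\al_n$-scaled Bernoulli parameters (your one-edge cross term $\frac{2\al_n}{1-\al_n/2}\te^2\epsilon_{ij}(\vphi)\epsilon_{ij}(\vphi')$ matches the paper's $\tau_{ij}$ exactly), and the upper bound is the Lei--Rinaldo operator-norm concentration plus Weyl's inequality with the same split into the regimes $(n\al_n)^{1/2}\al_n|\te|$ large or small. The only deviation is cosmetic: where the paper invokes the de la Pe\~na--Gin\'e Rademacher-chaos bound (Lemma \ref{lemcaos}), you use the identity $\sum_{i<j}R_iR_j=\tfrac12(W^2-n)$ and the sub-Gaussian bound on $E[e^{sW^2}]$, an equally valid and somewhat more self-contained finish; your explicit treatment of the bad event $\cE^c$ in the upper bound is a detail the paper leaves implicit.
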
  

\subsection{$k\geq 2$ classes}

The case of $k$ classes carries over to the sparse situation as follows. The lower bound result is only modified by a scaling factor $1/\al_n$. For upper bounds, considering the more easily computable spectral algorithm 
{\tt Spec-$\te$} only,  Assumption (A2) is replaced by (B2) below, where $N$ has the same definition as in Appendix \ref{sec:spec:k}. 
\begin{enumerate}
\item[(B2)] For $\la=\la(K)$ the smallest absolute eigenvalue of $N$,
there exists  $C>0$ such that
\begin{equation*}
n \al_n\la(K) \ga(K)\ge CK^{4.5},\quad n\al_n\ga(K)^2\ge CK^3\log{n},\quad
n\ge CK^3.
\end{equation*}
\end{enumerate}

\begin{thm} \label{thm-kcl-spa}          
Consider a stochastic blockmodel \eqref{modsbm} with $k\ge 2$ classes specified by $\cM_k$ in \eqref{modk}, that is $P_\te=P_{e_k,M^\te}$ with $e_k,M^\te$ given by  \eqref{propk}--\eqref{zte_spa}, for fixed matrices $A, B$ with arbitrary coefficients. 
There exists a  constant $c_3=c_3(\rho)>0$, independent of $A,B$,  such 
that,  for all $n\ge 12k$,
\[ \inf_{T} \sup_{\te\in[-1/2,1/2]} E_{\te} \left[ T(X) - \theta \right]^2       
\ge c_3\left(1 \wedge \frac{k}{n\al_n}\right), \]                         
where the infimum is taken over all estimators $T$ of $\te$ in the model $\cM_k$. 
Let $\cS_{2,\al_n}$ be the algorithm for $k=2$ classes in the sparse case described in Theorem \ref{thm-twocl-spa}. Consider the fixed-design setting and suppose $(B0), (B2), (A1)$ and $(A3)$ are satisfied. Then  the algorithm {\tt Spec}-$\te$ used with subroutine $\cS_{2,\al_n}$ outputs an estimator $\hat\te$ that satisfies, for $T_K$ as in \eqref{def_tk}, 
\[ \sup_{|\te|\le T_K,\, \vphi\in \Sigma_e} E_{\te,\vphi}\left(\hat\te-\te\right)^2 
\le C \frac{k}{n\al_n}.\]
\end{thm}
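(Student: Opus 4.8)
\noindent\emph{Proof plan.} The two displayed bounds are the sparse counterparts of Theorem~\ref{thm-kcl} and Theorem~\ref{thmspec}, and the plan is to reuse those two arguments essentially verbatim, threading in the sparsity factor $1/\al_n$ by way of the $k=2$ sparse bounds of Theorem~\ref{thm-twocl-spa}.

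\emph{Lower bound.} I would begin with a structural reduction collapsing the $k$-class problem onto the $k=2$ problem on the informative subgraph. Conditioning on the aggregated labelling $\psi$ (with $\psi(v)=1$ iff $\vphi(v)\in\{1,2\}$), under any $P_\te$ from the model with $e_k$ and $M^\te(\al_n)$ as in \eqref{propk}--\eqref{zte_spa} one checks that, given $\psi$: the edges with at most one endpoint in $S:=\psi^{-1}(1)$ have probabilities depending only on $\psi$ and the aggregated matrix $N$---not on $\te$, nor on the refinement of the label-$\{1,2\}$ nodes---and are independent of the edges with both endpoints in $S$, which form exactly the $k=2$--type mixture $\nu^{\te}_{|S|}$ built from $\al_nQ^\te$ on $|S|$ vertices. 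Since the law of $\psi$ and the conditional law of the non-block edges are common to every $P_\te$, convexity of total variation (the common non-block factor tensoring out) yields $\|P_{\te_1}-P_{\te_2}\|_{TV}\le E_\psi\|\nu^{\te_1}_{|S|}-\nu^{\te_2}_{|S|}\|_{TV}$, which by a triangle inequality through the product measure $\mu_m:=\text{Be}(\al_n/2)^{\otimes\binom{m}{2}}$ is at most $2\max_{i=1,2}E_\psi\|\nu^{\te_i}_{|S|}-\mu_{|S|}\|_{TV}$. The only computation that differs from the dense proof of Theorem~\ref{thm-kcl} is the second moment, under $\text{Be}(\al_n/2)$, of the product of two elementary likelihood ratios $\text{Be}(\al_n(\tfrac{1}{2}+\epsilon\te))/\text{Be}(\al_n/2)$ and $\text{Be}(\al_n(\tfrac{1}{2}+\epsilon'\te))/\text{Be}(\al_n/2)$, which equals $1+\al_n^2\te^2\epsilon\epsilon'/\{(\al_n/2)(1-\al_n/2)\}=1+(2+o(1))\al_n\te^2\epsilon\epsilon'$; so the factor $\te^2$ of the dense case becomes $\asymp\al_n\te^2$. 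Writing $\epsilon_{ij}(\vphi)\epsilon_{ij}(\vphi')=w_iw_j$ for suitable signs $w_i\in\{\pm1\}$, combining $\prod(1+x)\le\exp(\sum x)$ with $\sum_{i<j}w_iw_j=\tfrac{1}{2}\bigl((\sum_i w_i)^2-m\bigr)$ and a sub-Gaussian bound for the sum of $m$ i.i.d.\ signs gives $\chi^2(\nu^\te_m\|\mu_m)\lesssim(\al_n\te^2 m)^2$ whenever $\al_n\te^2 m\lesssim 1$. Since $|S|$ concentrates around $2n/k$, I would then choose $\te_1,\te_2$ of order $1\wedge\sqrt{k/(n\al_n)}$ and separated on that scale, so that $\|P_{\te_1}-P_{\te_2}\|_{TV}\le\tfrac{1}{2}$, and conclude with Le Cam's two-point inequality. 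The minimum with $1$ records the very sparse regime $n\al_n\lesssim k$, where the informative subgraph carries $O(1)$ edges and $\te$ cannot be estimated beyond constant order.

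\emph{Upper bound.} Here I would follow the three steps of the proof of Theorem~\ref{thmspec}, with the sparse ingredients substituted in. Step 1: run {\tt V-Clust} with $k-1$ classes; under $(B0)$, $(B2)$ and $(A1)$ the sparse forms of the recovery guarantees of \citet{Lei:Rinaldo:2015:1} and \citet{Lei:Zhu:2017:1} recover the aggregated labelling $\psi$ exactly up to permutation on an event $\cE$ of probability at least $1-n^{-c}$, $c$ being made large by enlarging the constants in $(B2)$. Step 2: on $\cE$, Bernstein's inequality places each recovered class's within-class empirical connectivity within $O\bigl(\sqrt{\al_n}\,k\sqrt{\log n}/n\bigr)$ of $\al_n b_{ii}$ (resp.\ $\al_n/2$ for the special class), while $(A3)$ enforces the separation $\al_n\kappa$; since $(B2)$ gives $n\al_n\gtrsim k$, the separation dominates the fluctuations and the special aggregated class---hence a set $\hat\cI^{(1)}$ of size $\asymp 2n/k$ with induced connectivity $\al_nQ^\te$---is identified on $\cE$ (up to shrinking $\cE$ by an event of comparable probability). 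Step 3: apply $\cS_{2,\al_n}$ to $X^{\hat\cI^{(1)}}$ and invoke the (fixed-design) $k=2$ sparse bound of Theorem~\ref{thm-twocl-spa} with $n'\asymp 2n/k$ vertices, giving $E_{\te,\vphi}\bigl[(\hat\te-\te)^2\ind_{\cE}\bigr]\lesssim 1/(n'\al_n)\lesssim k/(n\al_n)$; projecting $\hat\te$ onto $[-1/2,1/2]$, the complement contributes at most $\bP(\cE^c)\le n^{-c}\le 1/n\le k/(n\al_n)$ once $c\ge 1$, and the two pieces add up to the claimed bound. The restriction $|\te|\le T_K$ plays the same role as in Theorem~\ref{thmspec}: it ensures that running {\tt V-Clust} with $k-1$ classes merges the first two classes, and no other pair.

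\emph{Main obstacle.} I expect the bottleneck on the upper-bound side: re-deriving the Lei--Rinaldo/Lei--Zhu exact-recovery guarantee in the moderately sparse regime $\al_n\gtrsim\log n/n$, tracking how $\al_n$ scales the relevant eigenvalue gaps and misclassification bounds so that the recovery failure probability stays negligible against $k/(n\al_n)$. On the lower-bound side the only delicate point is controlling the error terms in the product expansion and the sub-Gaussian estimate once each per-edge contribution has size $\asymp\al_n\te^2$ rather than $\te^2$, and checking that $|S|$ concentrates tightly enough for the truncation $1\wedge k/(n\al_n)$ to emerge cleanly; everything else is a transcription of the dense-case proofs with $n$ replaced by $n\al_n$ in the pertinent estimates.
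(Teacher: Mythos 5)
Your proposal is correct. On the upper-bound side it follows the paper's proof step for step: the perturbed Lei--Rinaldo spectral recovery, the Lei--Zhu exact recovery of the aggregated labelling under (B0), (B2), (A1), the identification of the special aggregated cluster via (A3) and a Bernstein bound on within-class empirical connectivities, and finally the sparse two-class spectral estimator $\cS_{2,\al_n}$ applied to the $\asymp n/k$ recovered nodes, with the failure event absorbed using boundedness of $\hat\te-\te$. On the lower-bound side you arrive at the same essential computation --- a per-edge $\chi^2$ factor of order $\al_n\te^2\eta_i\eta_j\eta_i'\eta_j'$ followed by a Rademacher-chaos (sign-sum sub-Gaussian) estimate --- but by a genuinely different decomposition. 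The paper compares the mixture at $\te$ with the mixture at $\te=0$ and must therefore control an $L^1$ distance between \emph{two} mixtures; it does so by truncating to balanced labellings (this is where $n\ge 12k$ enters) and rewriting both laws as mixtures of mixtures over $(S_2,\vphi_{S_2},\vphi_{S_1})$, applying the triangle inequality twice to land on a fixed-design-type bound. You instead condition on the aggregated labelling $\psi$, factor out the $\te$-independent non-block edges, use convexity of total variation, and run a two-point argument between nonzero $\te_1,\te_2$ through the product Erd\H{o}s--R\'enyi reference measure; this sidesteps the mixture-versus-mixture difficulty the paper singles out as the delicate point, at the price of needing the explicit bound $\chi^2(\nu^\te_m\,\|\,\mu_m)\lesssim(\al_n\te^2 m)^2$ for $\al_n\te^2 m\lesssim 1$, which your computation correctly supplies and which is equivalent to the paper's Rademacher-chaos lemma. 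The loose ends you flag (tightness of the concentration of $|S|$, the $\al_n$-rescaled Lei--Zhu conditions) are exactly the ones the paper verifies, and your treatment of them is sound.
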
 

Similar comments as for Theorems \ref{thm-kcl}--\ref{thmspec} can be made. Also, in the case that $k$ does not grow with $n$, then (B2) follows from (B0) for $n$ larger than a fixed constant. The proof of the lower bound in Theorem \ref{thm-kcl-spa} is similar to that of Theorem \ref{thm-kcl} using the normalisation as in the proof of Theorem \ref{thm-twocl-spa} and is omitted.  The upper bound result includes that of Theorem \ref{thmspec} and is proved in Appendix \ref{sec:ub_spec}.

\section{Remaining proofs: likelihood-based upper bounds} \label{sec-ub}

The proof for ${k=2}$ below analyzes the least-squares criterion
directly. For ${k\geq 2}$ classes, we `isolate' the part corresponding
to the $2\times 2$ submodel, by controlling the number of errors in
recovering the labels of the corresponding $2$ classes. We then invoke the result for the case $k=2$.

\subsection{Interpretation as a pseudo-likelihood}

We first justify the interpretation of the estimator $\hat\theta$ in \eqref{pseml} as a maximum (pseudo-)likelihood estimate. In the fixed design model, suppose the data is Gaussian $\cN(\te_{ij},1)$ instead of Bernoulli Be$(\te_{ij})$. This suggests defining a (pseudo-)log-likelihood $\ell_n(\sigma,\te)$ as follows, with $c_n={n\choose 2}\log(2\pi)$,
\begin{align*}
-&2\ell_n(\sigma,\te)  = \sum_{i<j} (X_{ij} - Q_{\sigma(i)\sigma(j)}^\te)^2 + c_n\\
& = \sum_{i<j,\ \sigma(i)=\sigma(j)} (X_{ij} - (\frac12+\te))^2 +
\sum_{i<j,\ \sigma(i)\neq \sigma(j)} (X_{ij} - (\frac12-\te))^2 + c_n\\
& =  {n \choose 2} \te^2 +\te\left(
\sum_{i<j,\ \sigma(i)=\sigma(j)} (1-2X_{ij}) -
\sum_{i<j,\ \sigma(i)\neq \sigma(j)} (1-2X_{ij}) 
\right)+ C_n(X),
\end{align*}
for a constant $C_n(X)$ depending only on $n$ and $X$. Setting $b_n={n\choose 2}$ and 
\[ 2Z_n(\sigma,X) := - \sum_{i<j,\ \sigma(i)=\sigma(j)} (1-2X_{ij}) +
\sum_{i<j,\ \sigma(i)\neq \sigma(j)} (1-2X_{ij}), \]
it is enough to study 
the function $g_n(\te,\sigma):= b_n \te^2 - 2Z_n(\sigma,X) \te$, which satisfies
\[ g_n(\te,\sigma)=b_n\left(\te-\frac{Z_n(\sigma,X)}{b_n}\right)^2 - \frac{Z_n(\sigma,X)^2}{b_n} \ge - \frac{Z_n(\sigma,X)^2}{b_n}.\]
Consequently, the pseudo maximum likelihood estimator $(\hat\te,\hat\sigma)$ is given by \eqref{pseml} as claimed.

\subsection{Upper bound result, two classes}
\label{sec-ub:2}

\begin{proof}[Proof of Theorem \ref{thmub}]   
We first prove the result in the fixed design case. Let $\te_0,\sigma_0$ denote the true values of $\te,\vphi$. The aim is to show that $E_{\te_0,\sigma_0} (\hat\te-\te_0)^2\le C/n$ holds uniformly in $\te_0,\sigma_0$.
For a given $\sigma\in 2^{[n]}$,
\begin{equation*}     
 r_{ij} :=X_{ij}-1/2+(-1)^{\1_{\sigma_0(i)= \sigma_0(j)}}\te_0 \quad\text{and}\quad
 R_n(\sigma) := \sum_{\sigma(i)=\sigma(j)}r_{ij} - \sum_{\sigma(i)\neq\sigma(j)}r_{ij}. 
\end{equation*}      
One can write, for any $\sigma\in 2^{[n]}$,
 \begin{align*}
  Z_n(\sigma_0,X) & = b_n\te_0
+\sum_{i<j} (-1)^{\1_{\sigma_0(i)\neq \sigma_0(j)}} r_{ij} = b_n\te_0+R_n(\sigma_0),\\
Z_n(\sigma,X) &  = \te_0 \delta(\sigma,\sigma_0) + R_n(\sigma),
\end{align*} 
where we have set  
 \begin{equation}\label{deltas}
   \delta(\sigma,\sigma_0) = \sum_{i<j} (-1)^{\1_{\sigma_0(i)\neq \sigma_0(j)}}
   (-1)^{\1_{\sigma(i)\neq \sigma(j)}}.
 \end{equation}  
For any $t>0$ and $t_n=M_2/\rn$, and for a large enough $M_2$ to be chosen below,
\begin{align*}
& \lefteqn{P_{\te_0}[\rn|\hat\te-\te_0|\ge t]} &\\
& \qquad =  P_{\te_0}[\rn|\hat\te-\te_0|\ge t] \1_{|\te_0|\le t_n}+ 
 P_{\te_0}[\rn|\hat\te-\te_0|\ge t] \1_{|\te_0|> t_n} \\
& \qquad =:  \qquad\qquad \cP_1(t) \qquad\qquad+\qquad\qquad \cP_2(t). 
\end{align*}
By definition of $\hat\te$, with $\delta(\sigma,\sigma_0)$ defined in \eqref{deltas}, 
\[
b_n\hat\te = Z_n(\hat\sigma,X) = \te_0\delta(\hat\sigma,\sigma_0) 
+ R_n(\hat\sigma)
\qquad\text{ and }\qquad
 Z_n(\sigma_0,X) = \te_0 b_n + R_n(\sigma_0).
\]
For any $t\ge 4M_2$, using that $|\delta(\hat\sigma,\sigma_0)|\le b_n$,
\begin{align*}
\cP_1(t) \quad & \le \quad P_{\te_0}\left[ \rn|\te_0|
\left|\frac{\delta(\hat\sigma,\sigma_0)}{b_n}-1\right|
+\frac{\rn}{b_n}|R_n(\hat\sigma)| \ge t \right]\1_{|\te_0|\le t_n}\\
 & \le  \quad P_{\te_0}\left[\frac{\rn}{b_n}|R_n(\hat\sigma)| \ge t -2M_2 
\right]\\
 & \le \quad  P_{\te_0}\left[\sup_{\sigma\in2^{[n]}} |R_n(\sigma)| \ge \frac{b_n}{\rn}\frac{t}2 \right]. 
\end{align*}
For $\cP_2(t)$, there are two cases, depending on the sign of $\te_0$,
\begin{align*}
&\cP_2(t) \quad  \le \quad P_{\te_0}\left[ \rn|\hat\te-\te_0| \ge 
t \right] \1_{\te_0> t_n}
+ P_{\te_0}\left[ \rn|\hat\te-\te_0| \ge t \right]\1_{\te_0< - t_n} &\\
 & \le   \Big\{P_{\te_0}\left[\rn|\hat\te-\te_0| \ge t ,\, Z_n(\hat\sigma,X) \ge 0\right]
 \\
 &\qquad + P_{\te_0}\left[\rn|\hat\te-\te_0| \ge t ,\, Z_n(\hat\sigma,X) < 0\right]\Big\}
  \1_{\te_0> t_n}\\
 &   \qquad +  \Big\{P_{\te_0}\left[\rn|\hat\te-\te_0| \ge t ,\, Z_n(\hat\sigma,X) \ge 0\right] \\
& \qquad + P_{\te_0}\left[\rn|\hat\te-\te_0| \ge t ,\, Z_n(\hat\sigma,X) < 0\right]\Big\}
  \1_{\te_0< -t_n}
\end{align*}
Let us discuss the term $\te_0>t_n$ first and note that if $Z_n(\hat\sigma,X)\ge 0$, then $Z_n(\hat\sigma,X)=|Z_n(\hat\sigma,X)|\ge |Z_n(\sigma_0,X)|\ge Z_n(\sigma_0,X)$ using the definition of $\hat \sigma$ as a maximum.  First,
\begin{align*}
\lefteqn{ P_{\te_0}\left[\rn|\hat\te-\te_0| \ge t , 
Z_n(\hat\sigma,X) \ge 0\right]\1_{\te_0> t_n} }& \\
& \le P_{\te_0}\left[\frac{\rn}{b_n}| Z_n(\hat\sigma,X)- Z_n(\sigma_0,X) +
 R_n(\sigma_0)| \ge t , \, 
Z_n(\hat\sigma,X) \ge 0\right]\1_{\te_0> t_n} \\
& \le P_{\te_0}\left[\frac{\rn}{b_n}( Z_n(\hat\sigma,X)- Z_n(\sigma_0,X) ) +
 \frac{\rn}{b_n}|R_n(\sigma_0)| \ge t \right]\1_{\te_0> t_n} \\
 & \le P_{\te_0}\left[\frac{\rn}{b_n} \te_0 (\delta(\hat\sigma,\sigma_0) - b_n) +
 \frac{\rn}{b_n} R_n(\hat\sigma) + 
 2\frac{\rn}{b_n}|R_n(\sigma_0)| \ge t \right]\1_{\te_0> t_n} \\
  & \le P_{\te_0}\left[
3 \frac{\rn}{b_n} \sup_{\sigma\in 2^{[n]}} |R_n(\sigma)| \ge t \right], 
\end{align*}
where the first three inequalities use identities obtained for $Z_n(\hat\sigma,X), Z_n(\sigma_0,X)$ above and the inequality obtained before the display, and
the last inequality uses $\delta(\hat\sigma,\sigma_0)- b_n \le 0$ and $\te_0\ge 0$.

Second, as $Z_n(\hat\sigma,X) < 0$ implies $Z_n(\hat\sigma,X) < - | Z_n(\sigma_0,X) |$ by definition of the maximum, 
\begin{align*}
\lefteqn{ P_{\te_0}\left[\rn|\hat\te-\te_0| \ge t , \,
Z_n(\hat\sigma,X) < 0\right]\1_{\te_0> t_n} } & \\
& \le P_{\te_0} \left[Z_n(\hat\sigma,X) < - | Z_n(\sigma_0,X) | \right]\1_{\te_0> t_n} \\
& \le P_{\te_0} \left[Z_n(\hat\sigma,X) < - |\te_0| b_n + |R_n(\sigma_0)| 
\right]\1_{\te_0> t_n} \\
& \le P_{\te_0} \left[\te_0 (\delta(\hat\sigma,\sigma_0)+b_n) <2 \sup_{\sigma\in 2^{[n]}} |R_n(\sigma)| \right]\1_{\te_0> t_n}\\
& \le P_{\te_0} \left[2 \sup_{\sigma\in 2^{[n]}} |R_n(\sigma)|> \frac{t_nb_n}{8} \right],
\end{align*}
where for the last inequality we have used the lower bound on $\delta$ obtained in Lemma \ref{lemde}. 

The case $\te_0<-t_n$ is treated in a symmetric way, by distinguishing the two cases 
$Z_n(\hat\sigma,X)<0$ and $Z_n(\hat\sigma,X)\ge 0$ respectively. To obtain a deviation bound for $\hat \te$, it is enough to study the supremum of 
the process 
$|R_n(\sigma)|$. For any given $\sigma$ and $y>0$, by Hoeffding's inequality, 
\[ P_{\te_0}[|R_n(\sigma)|>y]\le 2 \exp\{-2 y^2/b_n\}.\]
A union bound now leads to
\[ P\left[ \sup_{\sigma\in 2^{[n]}} |R_n(\sigma)| \ge y \right] \le 2^n
\exp\left\{-2 y^2/b_n\right\}\qquad\text{ for any }y>0. \]
This bound is smaller than $\exp\{-2n\}$ if one chooses $y=n^{3/2}$.
Combining the bounds obtained previously, and choosing $M_2$ above as $M_2=64$, one deduces
\[ P[\rn|\hat\te-\te_0|\ge  t] \le 6 e^{-2n}\qquad\text{ for any }t\ge 4.\]

\noindent The deviation bound in turn implies the bound in expectation
\begin{align*}
 E[n(\hat\te-\te_0)^2] & = E[n(\hat\te-\te_0)^2\1_{\rn|\hat\te-\te_0|\le  4}] + E[n(\hat\te-\te_0)^2\1_{\rn|\hat\te-\te_0|>  4}],\\
& \le 16 + 6n e^{-2n},
\end{align*}
where for the second term we have used $|\hat\te-\te_0|\le 1$, as $\Theta$ has diameter $1$. This concludes the proof of Theorem \ref{thmub} in the fixed design case.

In the random design case, one slightly updates the definition of $r_{ij}$. Here, the design is specified by $\vphi$, which is now random, but one 
can consider 
\[ \tilde r_{ij} = X_{ij} - E[X_{ij}\given \vphi]. \]
By definition, $E[ \tilde r_{ij} ]=E[ E[ \tilde r_{ij} \given \vphi]]=0$. Now one can follow the proof in the fixed design case by writing all statements conditionally on $\vphi$. As conditionally on $\vphi$ the variables $r_{ij}$ are independent and centered, the arguments leading to the 
various upper bounds remain unchanged. As the upper-bounds themselves do not depend on $\vphi$, the bounds also hold unconditionally.
\end{proof}

What remains to be shown is the bound on $\delta$ used  above:

\begin{lem} \label{lemde}
For any $\sigma_0,\sigma \in \Sigma$, with $\delta(\sigma,\sigma_0)$ defined in 
\eqref{deltas} and $b_n={n \choose 2}$,
\[ - \frac78 b_n \le \delta(\sigma,\sigma_0) \le b_n\qquad\text{ for any }\ n\ge 5.\]
\end{lem}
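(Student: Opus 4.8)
The plan is to rewrite $\delta(\sigma,\sigma_0)$ as a quadratic form in a single family of $\pm1$ variables and then read off both bounds from the elementary inequality $0\le(\sum_iR_i)^2\le n^2$. First I would encode the two maps by signs, exactly as in the proof of Theorem~\ref{thm-twocl}: for $\sigma\in 2^{[n]}$ put $\eta_i=\1_{\sigma(i)=1}-\1_{\sigma(i)=2}$ and for $\sigma_0$ put $\eta_i'=\1_{\sigma_0(i)=1}-\1_{\sigma_0(i)=2}$, so that $\eta_i,\eta_i'\in\{-1,1\}$ and $(-1)^{\1_{\sigma(i)\neq\sigma(j)}}=\eta_i\eta_j$, $(-1)^{\1_{\sigma_0(i)\neq\sigma_0(j)}}=\eta_i'\eta_j'$. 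Setting $R_i:=\eta_i\eta_i'\in\{-1,1\}$, this gives
\[ \delta(\sigma,\sigma_0)=\sum_{i<j}\eta_i'\eta_j'\,\eta_i\eta_j=\sum_{i<j}R_iR_j=\frac12\Bigl[\Bigl(\sum_{i=1}^nR_i\Bigr)^2-\sum_{i=1}^nR_i^2\Bigr]=\frac12\Bigl[\Bigl(\sum_{i=1}^nR_i\Bigr)^2-n\Bigr]. \]

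The upper bound is then immediate: $\bigl|\sum_iR_i\bigr|\le n$, so $\delta(\sigma,\sigma_0)\le (n^2-n)/2=b_n$ (with equality precisely when all $R_i$ agree, i.e.\ when $\sigma=\sigma_0$ or $\sigma$ is the label-flip of $\sigma_0$). For the lower bound, $\bigl(\sum_iR_i\bigr)^2\ge 0$ yields $\delta(\sigma,\sigma_0)\ge -n/2$ (one even gets $\ge(1-n)/2$ when $n$ is odd, since $\sum_iR_i$ then has odd parity). It remains to check $-n/2\ge-\tfrac78 b_n$, i.e.\ $n/2\le\tfrac78\cdot\tfrac{n(n-1)}2$; this simplifies to $8\le 7(n-1)$, hence holds for every $n\ge 3$ and a fortiori for $n\ge 5$.

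I do not expect any real obstacle: once the sign substitution is in place, the whole statement reduces to the schoolbook identity $\sum_{i<j}R_iR_j=\tfrac12\bigl((\sum_iR_i)^2-n\bigr)$ together with $0\le(\sum_iR_i)^2\le n^2$. The only point requiring a little care is the bookkeeping of the constants, to confirm that the crude bound $-n/2$ is already below $-\tfrac78 b_n$ in the claimed range of $n$.
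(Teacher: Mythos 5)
Your proof is correct, and it takes a genuinely different route from the paper. The paper argues combinatorially: it sets $\cC_1=\sigma_0^{-1}(\{1\})$, assumes without loss of generality $|\cC_1|\ge n/2$, bounds the number of pairs in $\cC_1\times\cC_1$ that $\sigma$ splits by $|\cC_1|^2/2$, and thereby guarantees at least $|\cC_1|^2/8\ge b_n/16$ positive terms in the sum, which yields exactly the constant $-\tfrac78 b_n$. You instead reuse the sign encoding that the paper itself employs in the proof of Theorem~\ref{thm-twocl}, writing $\delta(\sigma,\sigma_0)=\sum_{i<j}R_iR_j$ with $R_i=\eta_i\eta_i'\in\{-1,1\}$, and then invoke the identity $\sum_{i<j}R_iR_j=\tfrac12\bigl[(\sum_i R_i)^2-n\bigr]$. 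This is both more elementary and strictly sharper: it gives $\delta(\sigma,\sigma_0)\ge -n/2$ (even $\ge(1-n)/2$ for odd $n$), of order $-n$ rather than $-n^2$, from which $-\tfrac78 b_n$ follows for all $n\ge 3$; the upper bound $b_n$ also drops out with its equality cases identified. The only thing the paper's counting argument ``buys'' is that it makes visible which configurations of $\sigma$ relative to $\sigma_0$ are responsible for the negative contributions, but for the purposes of the lemma (and of its use in the proofs of Theorems~\ref{thmub} and~\ref{ubk}, where only the one-sided bound $\delta+b_n\ge b_n/8$ is needed) your identity-based argument is cleaner and loses nothing.
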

\begin{proof}
The upper bound corresponds to the number of terms in the sum. For the lower bound, 
 denote $\cC_1=\sigma_0^{-1}(\{1\})=\{i,\ \sigma_0(i)=1\}$. By symmetry, one can always assume $|\cC_1|\ge n/2$, otherwise one works with  $\cC_2=\sigma_0^{-1}(\{2\})$. 
The number $T_\sigma(\cC_1)$ of pairs $(i,j)\in\cC_1\times \cC_1$ for which $\sigma(i)\neq \sigma(j)$ is at most $2N_1(\sigma)N_2(\sigma)$, if $N_i(\sigma)=|\sigma^{-1}(\{i\})\cap \cC_1|$, $i=1,2$. This implies $T_\sigma(\cC_1)\le |\cC_1|^2/2$, using the inequality $p(q-p)\le q^2/4$, for 
any $0\le p\le q$. Thus the number of positive elements in the sum defining $\delta(\sigma,\sigma_0)$ is at least $(|\cC_1|^2 -  |\cC_1|^2/2 - |\cC_1|)/2$, where $|\cC_1|$ corresponds to the diagonal terms and the division by $2$ to the fact that the sum is restricted to $i<j$ only (note that the general term of the sum defining $\delta$ is symmetric in $i,j$). This is at least $|\cC_1|^2/8$ if $|\cC_1|\ge 3$. Hence,
\[ \delta(\sigma,\sigma_0)\ge -b_n+2\frac{|\cC_1|^2}{8},\]
which is the desired bound in view of $|\cC_1|^2\ge b_n/2$.
\end{proof}  

\subsection{Upper bound result, $k$ classes}
\label{sec-ub:k}

\begin{proof}[Proof of Theorem \ref{ubk}]   
First consider the fixed design case: As in the proof of Theorem \ref{thmub}, 
let $\te_0, \sigma_0$ denote the true values of $\te,\vphi$. The aim is to show that 
\[ \sup_{|\te_0|\le \kappa,\, \sigma_0\in\Sigma_e} E_{\te_0,\sigma_0}(\hat\te-\te)^2\le C \frac{k}{n}. \]
Let us denote by $Z^0$ and $\tilde Z$ the matrices of general terms $Z^0_{i,j}:=M_{\sigma_0(i)\sigma_0(j)}^{\te_0}$ and $\tilde Z_{ij}:=M^{\tilde \te}_{\tilde\sigma(i)\tilde\sigma(j)}$ respectively, with $\tilde\sigma,\tilde\te$ given by \eqref{ub-pre} and $i\neq j$. One interpretation of \eqref{ub-pre} if that the matrix $\tilde Z$  provides the best fit to the data $X_{ij}$ with respect to the squared $L^2$ loss, when optimising 
over $\Sigma_e\times \Theta_n$. 

As a first step, we show that $\tilde Z$ and $Z^0$ are close with high probability,  a result in the spirit of Gao et al \cite{chaogr15}, Theorem 2.1. This follows from Lemma \ref{lemzed} below, which states that $\|\tilde{Z}-Z^0\|^2\le Cn\log k$ with probability at least $1-e^{-n\log{k}}$, where $\|\cdot\|$ is the Frobenius norm.

In a second step,  denoting $S_I^0:=\sigma_0^{-1}(\{1,2\})$ and recalling from \eqref{sun-hat} that $\tilde S_I=\tilde\sigma^{-1}(\{1,2\})$, we show that $\tilde S_I$ is close to $S_I^0$. To do so,  one  separately bounds from below some terms from the quantity $\|Z^0 - \tilde Z\|^2= \sum_{i,j} (Z^0_{i,j} - \tilde Z_{i,j})^2$, recalling that one extends the 
$Z$ matrices by symmetry and sets the diagonal to $0$. First, using the definitions, \eqref{techm}, $|\te|\le \kappa$, and $n\ge 2$,
\[   \sum_{i,j\,\in\, S_I^0\,\setminus\, \tilde S_I} (Z^0_{ij} - \tilde Z_{ij})^2  
\ge \frac{\kappa^2}{2} | S_I^0\,\setminus\, \tilde S_I |^2, \]
if $ | S_I^0\,\setminus\, \tilde S_I |\ge 2$ (otherwise the inequality below holds trivially), as well as 
\[ \sum_{i\,\in\, S_I^0\,\cap\, \tilde S_I\,,\, j\,\in\, S_I^0\,\setminus\, \tilde S_I} (Z^0_{ij} - \tilde Z_{ij})^2 \ge  
\frac{\kappa^2}{2}\, |S_I^0\,\cap \,\tilde S_I|\,| S_I^0\,\setminus\, \tilde S_I |, \]
and, with $a\wedge b=\min(a,b)$, if $|S_I^0\,\cap\, \tilde S_I|\ge 2$,
\[  \sum_{i,j\,\in\, S_I^0\,\cap\, \tilde S_I} (Z^0_{ij} - \tilde Z_{ij})^2 \ge \frac12\left[ (\te_0-\tilde\te)^2 \wedge (\te_0+\tilde\te)^2\right] |S_I^0\,\cap \,\tilde S_I|^2.\]
The previous bound on $\|\tilde Z - Z^0\|^2$ implies that
\[ \max\left(\,| S_I^0\,\setminus\, \tilde S_I |^2\,,\,|S_I^0\,\cap \,\tilde S_I|\,| S_I^0\,\setminus\, \tilde S_I |\,\right) \le C\kappa^{-2} n\log{k}\ \text{ with high probability.}\]
It now follows from \eqref{techc} that for any $\delta>0$, one has $C\kappa^{-2} n\log{k}\le \delta n^2/k^2$, provided $d$ in \eqref{techc} is small enough.  So for small $d$, as $|S_I^0|=|S_I^0\cap \tilde S_I|+ |S_I^0\setminus \tilde S_I |$, and as by assumption $\sigma_0\in\Sigma_e$ so that 
$|S_I^0|\asymp n/k$, one deduces $|S_I^0 \cap \tilde S_I|\geqa n/k$.
From the bound on $\|\tilde Z-Z^0\|^2$, it follows that 
\[ (\te_0-\tilde\te)^2\wedge (\te_0+\tilde\te)^2 \leqa \frac{n\log{k}}{n^2 k^{-2}} \leqa
\frac{k^2\log{k}}{n}\qquad\text{ with high probability.} \]
By \eqref{techc} this shows that $\tilde\te$ is close to either $\te_0$ or $-\te_0$ up to $k(\log{n}/n)^{1/2}=:\rho\le \kappa/2$. Now for any $i,j$ in $\tilde S_I\setminus S_I^0$, if $\tilde Z_{ij}=\frac12+\tilde{\te}$ and $\tilde\te$ is close to $\te_0$ (the cases where $\tilde Z_{ij}=\frac12- \tilde{\te}$ or $\tilde\te$  is close to $-\te_0$ are treated similarly), setting $Z_{ij}^0=c_{ij}$ and using \eqref{techc}, with $a_0=1/2$,
\begin{align*}
 |Z_{ij}^0 - \tilde{Z}_{ij}| & =|c_{ij}- (a_0+\tilde\te)| =|c_{ij} - (a_0+\te^0) - (\tilde\te - \te^0)| \\
 & \ge |c_{ij} - (a_0+\te^0)| - |\tilde\te-\te_0|\ge \kappa - \rho \ge \kappa/2.
\end{align*}
Therefore,
\[\kappa^2 | \tilde S_I \setminus S_I^0|^2 \leqa  \sum_{i,j\,\in\, \tilde 
S_I \setminus S_I^0} (Z^0_{ij} - \tilde Z_{ij})^2 \leqa n\log{k}\qquad\text{ with high probability} \]
and
\[ \kappa^2 |S_I^0\,\cap \,\tilde S_I|\,| \tilde S_I\,\setminus\, S_I^0 | 
\leqa  \sum_{\substack{i\in\, S_I^0\,\cap\, \tilde S_I\\ j\in\, \tilde S_I\,\setminus\, S_I^0}} (Z^0_{ij} - \tilde Z_{ij})^2 \leqa n\log{k}\qquad\text{ with high probability.} \]
Combining the previous bounds on cardinalities and denoting $A\ \Delta\ B:= (A\setminus B)\,\cup\, (B\setminus A)$ for two sets $A$  and $B$, one obtains
\[ |\tilde S_I \ \Delta\ S_I^0|^2 + |\tilde S_I \ \Delta\ S_I^0||S_I^0\,\cap \,\tilde S_I|
\leqa \kappa^{-2}n\log{k}\qquad\text{ with high probability,}\]
which in turn implies that
\begin{equation} \label{trc}
 |\{\tilde S_I \times \tilde S_I\} \ \Delta\   \{S_I^0 \times S_I^0\}| \leqa \kappa^{-2}n\log{k}\qquad\text{ with high probability.}
\end{equation} 

In a third and last step, we follow the proof of Theorem \ref{thmub}. Let 
$\hat \sigma=\hat\sigma_I$ be the mapping in \eqref{sigi}. It is a map $\tilde{S}_I\to \{1,2\}$. Let $\bar{\sigma}$ be the mapping $S_I^0\to \{1,2\}$ that coincides with $\hat \sigma$ on $\tilde S_I \cap S_I^0$ and with $\sigma_0$ on $S_I^0\setminus \tilde S_I$. By definition we have, with 
$\Delta_n:=\kappa^{-2}n\log{k}$,
\begin{align*}
Z_n(\hat\sigma,\tilde S_I,X) & = \sum_{i<j, i,j\in \tilde S_I}
(-1)^{\1_{\hat\sigma(i)=\hat\sigma(j)}}\left(\frac12-X_{ij}\right) \\
& = \sum_{i<j, i,j\in  S^0_I}
(-1)^{\1_{\bar\sigma(i)=\bar\sigma(j)}}\left(\frac12-X_{ij}\right) + O(\Delta_n)\\
& = \delta(\bar\sigma,\sigma_0)\te_0 + R_n(\bar{\sigma}) + O(\Delta_n),
\end{align*}
where for the second identity we have used that the $X_{ij}$s are bounded 
by $1$ and \eqref{trc}, and $R_n(\sigma)$ is defined as in the proof of Theorem \ref{thmub}. Similarly, denoting $n_k={|S_I^0| \choose 2}$  
and $\tilde{n}_k={|\tilde S_I| \choose 2}$, we have 
\begin{align*}
Z_n(\sigma_0,\tilde{S}_I,X) & = n_k \te_0+O(\Delta_n) + R_n(\sigma_0) 
 = \tilde{n}_k\te_0+O(\Delta_n) + R_n(\sigma_0),
\end{align*}
with high probability, since \eqref{trc} implies $|\tilde{n}_k-n_k|=O(\Delta_n)$  using that $||A|-|B||\le |A\Delta B|$ for two sets $A, B$. Also, since $\hat\te = Z_n(\hat\sigma,\tilde{S}_I,X)/\tilde{n}_k$ and $|\hat\te|\le 1/2$, by the same argument we have
\begin{equation} \label{htequiv}
\hat\te = \frac{Z_n(\hat\sigma,\tilde{S}_I,X)}{n_k} + \frac{O(\Delta_n)}{n_k}.
\end{equation}

Let $v_k$ and $t_k$ be two sequences depending on $n$ and $k$ whose specific values are determined below (see the last paragraph of the proof). 
If $Z_n(\hat\sigma,\tilde{S}_I,X)\ge 0$, then $Z_n(\hat\sigma,\tilde{S}_I,X)\ge |Z_n(\sigma_0,\tilde{S}_I,X)|\ge Z_n(\sigma_0,\tilde{S}_I,X)$.
Let $\Sigma_0$ be the set of all maps $S_0\to\{1,2\}$. In the following inequalities we repeatedly use the fact that the normalisation $\tilde n_k$ in the definition of $\hat\te$ can be replaced by $n_k$ up to a factor $O(\Delta_n)/n_k$, see \eqref{htequiv},
\begin{align*}
\lefteqn{ P_{\te_0}\left[v_k|\hat\te-\te_0| \ge t , 
Z_n(\hat\sigma,\tilde{S}_I,X) \ge 0\right]\1_{\te_0> t_k} }& \\
& \le P_{\te_0}\Big[\frac{v_k}{n_k}| Z_n(\hat\sigma,\tilde{S}_I,X)- Z_n(\sigma_0,\tilde{S}_I,X) + O(\Delta_n) -
 R_n(\sigma_0)| \ge t , \\  
& \qquad\qquad Z_n(\hat\sigma,\tilde{S}_I,X) \ge 0\Big]\1_{\te_0> t_k} \\
& \le P_{\te_0}\left[\frac{v_k}{n_k}\left[ Z_n(\hat\sigma,\tilde{S}_I,X)- 
Z_n(\sigma_0,\tilde{S}_I,X)\right] +
 \frac{v_k}{n_k}\left[|R_n(\sigma_0)|+O(\Delta_n)\right] \ge t \right]\1_{\te_0> t_k} \\
 & \le P_{\te_0}\left[\frac{v_k}{n_k} \te_0 (\delta(\bar\sigma,\sigma_0) - n_k) 
 + \frac{v_k}{n_k}\left[R_n(\hat\sigma)+|R_n(\sigma_0)|+O(\Delta_n)\right]  \ge t \right]\1_{\te_0> t_k} \\
  & \le P_{\te_0}\left[
2 \frac{v_k}{n_k} \sup_{\sigma\in \Sigma_0} |R_n(\sigma)| + \frac{v_k}{n_k} O(\Delta_n) \ge t \right], 
\end{align*}
where the last inequality uses $\delta(\bar\sigma,\sigma_0)- n_k \le 0$, see Lemma \ref{lemde} with $n_k$ in place of $b_n$, and $\te_0\ge 0$.

Second, as $Z_n(\hat\sigma,\tilde{S}_I,X) < 0$ implies $Z_n(\hat\sigma,\tilde{S}_I,X) < - | Z_n(\sigma_0,\tilde{S}_I,X) |$ by definition of $\hat\sigma$, 
\begin{align*}
\lefteqn{ P_{\te_0}\left[ Z_n(\hat\sigma,\tilde{S}_I,X) < 0\right]\1_{\te_0> t_k} } & \\
& \le P_{\te_0} \left[Z_n(\hat\sigma,\tilde{S}_I,X) < - | Z_n(\sigma_0,\tilde{S}_I,X) | \right]\1_{\te_0> t_k} \\
& \le P_{\te_0} \left[Z_n(\hat\sigma,\tilde{S}_I,X)  < - |\te_0| n_k + O(\Delta_n) 
+ |R_n(\sigma_0)| \right]\1_{\te_0> t_k} \\
& \le P_{\te_0} \left[\te_0 (\delta(\bar\sigma,\sigma_0)+n_k) - 2O(\Delta_n) <2 \sup_{\sigma\in\Sigma} |R_n(\sigma)| \right]\1_{\te_0> t_k}\\
& \le P_{\te_0} \left[2 \sup_{\sigma\in\Sigma_0} |R_n(\sigma)|> \frac{t_k 
n_k}{8} 
- O(\Delta_n) \right], 
\end{align*}
where for the last inequality we have used the first inequality of Lemma \ref{lemde}. 
Also,
\begin{align*}
\lefteqn{P_{\te_0}\left[v_k |\hat\te-\te_0|\ge t\right] \1_{|\te_0|\le t_k} }\\ 
& \le \quad P_{\te_0}\left[
\frac{v_k}{n_k}| \te_0 | |\delta(\bar\sigma,\sigma_0) - n_k|
 + \frac{v_k}{n_k}\left[O(\Delta_n)+ |R_n(\bar\sigma)| + 
 |R_n(\sigma_0)|\right]  \ge t \right]\1_{|\te_0|\le t_k}\\
 & \le \quad  P_{\te_0}\left[2\frac{v_k}{n_k}\sup_{\sigma\in\Sigma_0} |R_n(\sigma)| 
 + \frac{v_k}{n_k}O(\Delta_n) \ge t-v_kt_k \right]. 
\end{align*}
By the same argument as in the proof of Theorem \ref{thmub}, the supremum 

$\sup_{\sigma\in\Sigma_0} |R_n(\sigma)|$ is of the order $|\Sigma_0|^{3/2}\asymp (n/k)^{3/2}$, by definition of $\Sigma_e$. Recall that $n_k\asymp 
(n/k)^2$. 
Set $v_k=\sqrt{n/k}$ and $t_k=Dv_k^{-1}$, with $D$ a large enough constant. Assumption \eqref{techc} ensures that $\Delta_n=\kappa^{-2} n\log{k}=O((n/k)^{3/2})$. Hence by taking $t$ a large enough constant, one obtains that the last three displays are bounded above by $e^{-Cn/k}$, which concludes the proof in the fixed design case, proceeding as in  the proof of Theorem \ref{thmub} to get the final bound in expectation.

The proof in the random design case is obtained by first deriving the results conditionally on $\vphi$ and then integrating out $\vphi$, as we did 
in the proof of Theorem \ref{thmub}. The first part is almost identical to the fixed design case: one only needs to note that one can restrict to mappings $\vphi$ that belong to, essentially,  $\Sigma_e$.  Denote by $\Sigma_e'$ the subset of those ${\sigma\in\Sigma_e}$ satisfying $||\sigma^{-1}(j)|-\frac{n}{k}|<\frac{n}{2k}$. Then
\begin{align*}
 E_{\te_0}[ (\hat\te-\te_0)^2 ] & =  E_{\te_0}[ (\hat\te-\te_0)^2 \1\{ \vphi \in \Sigma_e' \}]
 + E_{\te_0}[ (\hat\te-\te_0)^2 \1\{ \vphi \in \Sigma_e' \}] \\
 & = E_{\te_0}\left[ \, E_{\te_0}[(\hat\te-\te_0)^2 \given\vphi]\, \1\{ 
\vphi \in \Sigma_e' \}\right] + P_{\te_0}[\vphi\notin \Sigma_e'].
\end{align*} 
For the first term, one can apply the arguments above in fixed design, while for the second an application of Bernstein's inequality gives  
\[ P_{\te_0}\left[\vphi\notin \Sigma_e'\right]\le kP\left[ |\text{Bin}(n,k) - \frac{n}{k}| >   \frac{n}{2k}
\right] \le 2k e^{-\frac{n}{10k}}. \]
By \eqref{techc}, $k^3\le Cn$ holds for $C$ large enough---note that $\kappa$ must be smaller than $1$, as the entries of $M$ are in $[0,1]$. One deduces $k e^{-\frac{n}{10k}}\le \frac{k}{n} ne^{-dn^{2/3}}\le C\frac{k}{n}$, for $d$ small enough and $C$ large enough, so the quadratic risk is at most $Ck/n$ in this case as well.
\end{proof}
\vspace{.3cm}

\begin{lem}\label{lemzed}
Let $Z^0_{i,j}:=M_{\sigma_0(i)\sigma_0(j)}^{\te_0}$ and $\tilde Z_{ij}:=M^{\tilde \te}_{\tilde\sigma(i)\tilde\sigma(j)}$, with $\tilde\sigma,\tilde\te$ given by \eqref{ub-pre}. Let $\|\cdot\|$ denote the matrix Frobenius norm. With probability at least $1-e^{-cn\log{k}}$, 
\[ \|\tilde{Z}-Z^0\|^2 
\leqa Cn\log{k}.\] 
\end{lem}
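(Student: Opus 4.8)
The plan is to run the standard least-squares \emph{basic inequality} against a union bound over the finite index set $\Sigma_e\times\Theta_n$; the one feature special to this setting is that $|\Sigma_e|\le k^n$, which is precisely what produces the $n\log k$ factor. First I would write $\xi_{ij}=X_{ij}-Z^0_{ij}$ for $i<j$, which under $P_{\te_0,\sigma_0}$ are independent, centred and supported in an interval of length $1$, and let $\te_0'$ be the point of the grid $\Theta_n$ nearest $\te_0$, so $|\te_0'-\te_0|\le 1/(2n^2)$. Since $(\tilde\sigma,\tilde\te)$ minimises $\sum_{i<j}(X_{ij}-Z^\te_{\sigma(i)\sigma(j)})^2$ over $\Sigma_e\times\Theta_n$, and $(\sigma_0,\te_0')$ is an admissible competitor (using that $\sigma_0\in\Sigma_e$: automatic in fixed design, and in random design after restricting to the typical event as in the proof of Theorem~\ref{ubk}), expanding the squares around $Z^0=E[X]$ would give
\[ \tfrac12\,\|\tilde Z-Z^0\|^2\ \le\ 2\sum_{i<j}\xi_{ij}\,(\tilde Z_{ij}-Z^0_{ij})\ +\ \rho_n, \]
where $\rho_n$ absorbs the error of replacing $\te_0$ by $\te_0'$ and is bounded by a universal constant, since each of the ${n\choose2}$ entries contributes $O(1/n^2)$ to the relevant sums.

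Next I would control the random linear term uniformly. For a fixed pair $(\sigma,\te)$, with $S(\sigma,\te):=\sum_{i<j}(Z^\te_{\sigma(i)\sigma(j)}-Z^0_{ij})^2$, Hoeffding's inequality gives
\[ P_{\te_0,\sigma_0}\!\Bigl(\Bigl|\sum_{i<j}\xi_{ij}\,(Z^\te_{\sigma(i)\sigma(j)}-Z^0_{ij})\Bigr|>t\Bigr)\ \le\ 2\exp\!\bigl(-2t^2/S(\sigma,\te)\bigr). \]
The key observation is that with $t=C\sqrt{S(\sigma,\te)}\,\sqrt{n\log k}$ the right-hand side becomes $2\exp(-2C^2 n\log k)$, which does \emph{not} depend on $S(\sigma,\te)$; hence a single union bound, rather than a peeling argument over the scale of $S$, suffices. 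Since $|\Sigma_e|\le k^n$ and $|\Theta_n|=2n^2+1$, for a large enough universal constant $C$ this would show that, with probability at least $1-e^{-cn\log k}$, simultaneously for all $(\sigma,\te)\in\Sigma_e\times\Theta_n$,
\[ \Bigl|\sum_{i<j}\xi_{ij}\,(Z^\te_{\sigma(i)\sigma(j)}-Z^0_{ij})\Bigr|\ \le\ C\,\sqrt{S(\sigma,\te)}\,\sqrt{n\log k}. \]

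On that event, applying the last display at $(\sigma,\te)=(\tilde\sigma,\tilde\te)$ and using $S(\tilde\sigma,\tilde\te)=\tfrac12\|\tilde Z-Z^0\|^2$ (the diagonals vanish by convention), the basic inequality becomes $\tfrac12\|\tilde Z-Z^0\|^2\le \sqrt2\,C\,\|\tilde Z-Z^0\|\,\sqrt{n\log k}+\rho_n$, a quadratic inequality in $\|\tilde Z-Z^0\|$ whose solution yields $\|\tilde Z-Z^0\|\leqa\sqrt{n\log k}$ and hence $\|\tilde Z-Z^0\|^2\leqa n\log k$ (using $k\ge 2$, so $n\log k$ is bounded below). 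I expect the only genuinely delicate point to be the uniform deviation bound of the second step: one must confirm that the standardised fluctuations $\sum_{i<j}\xi_{ij}d_{ij}/(\sum_{i<j}d_{ij}^2)^{1/2}$ are sub-Gaussian with a variance proxy independent of $d$, so that the crude count $|\Sigma_e\times\Theta_n|\le 3n^2 k^n$ can be absorbed into the exponent while leaving the $\sqrt{n\log k}$ rate intact; everything else (the basic inequality, the discretisation of $\te$, and the random-design reduction) is routine.
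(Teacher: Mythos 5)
Your proposal is correct and follows essentially the same route as the paper's proof: a least-squares basic inequality against the grid point of $\Theta_n$ nearest $\te_0$, then Hoeffding's inequality for the empirical process normalised by $\sqrt{S(\sigma,\te)}$ so that the tail is direction-free, a union bound over $\Sigma_e\times\Theta_n$ of cardinality $\leqa k^{Cn}$ giving the $\sqrt{n\log k}$ level, and finally the resulting quadratic inequality in $\|\tilde Z-Z^0\|$. The only cosmetic difference is that the paper centres the basic inequality at $Z^1$ (the grid approximation of the truth) and recovers $\|\tilde Z-Z^0\|$ by the triangle inequality at the end, whereas you centre at $Z^0$ directly and absorb the $O(1)$ discretisation terms into $\rho_n$; both are valid.
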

\vspace{.2cm}

\begin{proof}[Proof of Lemma \ref{lemzed}] Let $\te_1$ denote the element 
of $\Theta_n$ closest to $\te_0$, so that $|\te_0-\te_1|\le n^{-2}$. Let $Z^1$ be the matrix given by $Z^1_{ij}:= M^{\te_1}_{\sigma_0(i)\sigma_0(j)}$. 
By definition, $\|X-\tilde Z\|^2 \le \|X- Z^1\|^2$ and hence 
$\|Z^1 - \tilde Z\|^2 + 2\psg X-Z^1, Z^1-\tilde Z\psd \le 0$, so
\begin{align*}
 \|Z^1 - \tilde Z\|^2 & \le  2\|Z^1-\tilde Z\| \sup_{\te\in\Theta_n,\ \sigma\in\Sigma_e} |\psg X-Z^1,  \frac{M^\te_{\sigma(i)\sigma(j)}-Z^1}{\|M^\te_{\sigma(i)\sigma(j)}-Z^1\|}\psd| \\
& \le 2 \|Z^1-\tilde Z\|
\Big[\sup_{\te\in\Theta_n,\ \sigma\in\Sigma_e} |T_n(\sigma,\theta)|
+ \|Z^0-Z^1\|\Big],
\end{align*} 
where we denote $T_n(\sigma,\te):= \psg X-Z^0,  (M^\te_{\sigma(i)\sigma(j)}-Z^1)
/\|M^\te_{\sigma(i)\sigma(j)}-Z^1\|\psd$. As elements of the matrix $X-Z^0$ are between $-1$ and $1$, we note that $T_n(\sigma,\te)$ is of the form $\sum_{l} \mu_l\veps_l$, where $\veps_l\in[-1,1]$ are independent, and $\sum_{l}\mu_l^2=1$. So using Hoeffding's inequality, for any $t>0$,
\[ P[ |T_n(\sigma,\te)|>t] \le 2 \exp\{-t^2/2\}. \]
The cardinality of the set $\Theta_n\times \Sigma_e$ is bounded above by $(2n^2+1) k^n\leqa k^{Cn}$. A union bound then shows that, with probability at least $1-e^{-cn\log k}$,
\begin{equation*}   
\sup_{\te\in\Theta,\ \sigma\in\Sigma_e} |T_n(\sigma,\te)| \le C\sqrt{n\log{k}}.
\end{equation*}
Inserting this back into the previous inequality on $\|Z^1-\tilde{Z}\|^2$ 
leads to $\|Z^1-\tilde{Z}\|\leq C\sqrt{n\log k}+\|Z^0-Z^1\|$ with probability at least $1-e^{-cn\log{k}}$. As $\|Z^0- Z^1\|^2 \le Cn^2/n^2\le C$, the triangle inequality leads to the result.
\end{proof}

\section{Remaining proofs: lower and upper bounds in the sparse case}
\label{sec:ub_spec}

We begin with a brief overview of the proof techniques: For the lower bounds in the sparse setting (Theorems \ref{thm-twocl-spa} and 
\ref{thm-kcl-spa}), proofs are very similar to the dense case, and it suffices to track the dependence on the sparsity parameter $\alpha_n$. To upper-bound the convergence rate of spectral estimates (Theorems \ref{thm_ubspec} and \ref{thm-twocl-spa}), we use the fact that $\theta$ can be estimated from the largest absolute eigenvalue of the (translated) adjacency matrix. The latter can in turn be estimated empirically. 
For the proofs of the upper bounds for $k$ classes, we show that with high probability it is possible to recover the true aggregated labels, where aggregation means that classes $1$ and $2$, corresponding to the `hard submodel' are merged (this is the `$g$ map' introduced above in the second paragraph of Appendix \ref{sec:spec:k}). To do so, one adapt techniques introduced by \citet{Lei:Rinaldo:2015:1}, and \citet{Lei:Zhu:2017:1} and show that their results still hold `under small perturbations', as explained in more details below. Once the true aggregated labels of classes $1$ and $2$ are obtained, it suffices to apply the (already derived) result for the case $k=2$.

\subsection{Proofs for the two-class case}

\begin{proof}[Proof of the lower bound in Theorem \ref{thm-twocl-spa}]
One proceeds in the same way as in the proof of Theorem \ref{thm-twocl}
with $a_{ij}(\vphi)$  replaced by 
$b_{ij}(\vphi):=Q_{\vphi(i)\vphi(j)}^\theta(\al_n)$, $i<j$. 
If $\la_1=\text{Be}(b_{ij}(\vphi))$, $\la_2=\text{Be}(b_{ij}(\psi))$, 
$\mu=\text{Be}(\al_n/2)$, we now have 
\[ \ta_{ij}(\vphi,\psi) := \int (\frac{d\la_1}{d\mu}-1)(\frac{d\la_2}{d\mu}-1)d\mu 
= \frac{(2b_{ij}(\vphi)-\al_n)(2 b_{ij}(\psi)-\al_n)}{\al_n(2-\al_n)}. \]
This leads to, with $\eta_i=\1_{\vphi(i)=1}-\1_{\vphi(i)=2}$ and $\eta_i'=\1_{\psi(i)=1}-\1_{\psi(i)=2}$,
\[ \ta_{i,j}(\vphi,\psi)=\frac{4\al_n\theta^2\eta_i\eta_j\eta_i'\eta_j'}{2-\al_n}. \]
By the same argument as in the proof of Theorem \ref{thm-twocl},
it is enough to solve
\[ \frac{n\al_n\te^2}{2-\al_n} = C \]
for $\te$, where $C$ is a universal positive small enough constant, under the constraint that $|\te|\le 1/2$. This leads to take $\te^2$ equal up to a constant to $(n\al_n)^{-1}\wedge 1$ and the proof is complete.
\end{proof}

\begin{proof}[Proof of the upper bound in Theorem \ref{thm-twocl-spa}] We write the proof directly in the possibly sparse setting. 
Let us first consider the fixed design case, where $\vphi$ is non-random. 
Let  $\|\,.\,\|_{Sp}$ denote the spectral norm of a matrix (for a symmetric matrix $\Delta$, $\|\,.\,\|_{Sp}=\max(|\la_1(\,.\,)|,|\la_n(\,.\,)|)$, so  $|\la_1^a(\Delta)|=\|\Delta\|_{Sp}$).
By \citep[][Theorem 5.2]{Lei:Rinaldo:2015:1}, we have that for any $r>0$, 
there exists a $C=C(r,c_0)>0$ such that
\[ \|X-E[X]\|_{Sp} \le C\sqrt{n\al_n}\;,\]
with probability at least $1-n^{-r}$. From this one deduces that
$\|\Delta_n-E[\Delta_n]\|_{Sp}\le C\sqrt{n\al_n}$.
The eigenvalues of $\Delta$ and those of ${\Delta_n-E[\Delta_n]}$ and ${E[\Delta_n]}$ can be related to each other by a Weyl-type inequality as 
\[ |\la_i(\Delta_n)-\la_i(E\Delta_n)| \le \|\Delta_n-E\Delta_n\|_{Sp}\;, \]
for any $1\le i\le n$, see e.g. \citep[][eq. (1.64)]{Tao:2012}. Suppose for now that $\te\ge 0$. In this case $\la_1(E\Delta_n)=(n-1)\al_n\te$ and $\la_n(E\Delta_n)=0$, which by the previous inequality implies, with 
high probability,
\[ \left|\frac{\la_1(\Delta_n)}{\al_n(n-1)}-\te\right| \le \frac{C}{\sqrt{n\al_n}},\quad \left|\frac{\la_n(\Delta_n)}{\al_n(n-1)}\right|\le \frac{C}{\sqrt{n\al_n}}.  \]
Now if $\te>2C/\sqrt{n\al_n}$, using the first inequality we have $\la_1(\Delta_n)/\{\al_n(n-1)\}>C/\sqrt{n\al_n}$ and $\tilde{\la}_1=\la_1(\Delta_n)$ follows from the second inequality, which means $|\hat\te_n-\te|\le C/\sqrt{n\al_n}$. If $\te\le2C/\sqrt{n\al_n}$, the triangle inequality and the second inequality imply $|\la_n(\Delta_n)-\te|\le 3C/\sqrt{n\al_n}$, which combined with the first inequality gives  $|\hat\te_n-\te|\le 3C/\sqrt{n\al_n}$. So, for $\te\ge0$, in all cases $|\hat\te_n-\te|\le 3C/\sqrt{n\al_n}$ with high probability. The case $\te<0$ is treated similarly. 
In the random design setting, one can argue conditionally on $\vphi$, and 
then note that both the obtained bounds and the in-probability statements 
do not depend on $\vphi$, which gives the result in this setting as well.
\end{proof}

\subsection{Proofs for the general case}

\begin{proof}[Proof of the lower bound in Theorem \ref{thm-kcl-spa}]
The proof is similar to that of Theorem \ref{thm-kcl}, where one now uses 
the sparse lower bound for two classes of Theorem \ref{thm-twocl-spa} instead of Theorem \ref{thm-twocl}, and is thus omitted.
\end{proof}

\begin{proof}[Proof of the upper bound in Theorem \ref{thm-kcl-spa}] 
We show that the proof approach used by \cite{Lei:Zhu:2017:1} to establish their Theorem 2 can be adapted to our problem. More precisely, it is amenable to a perturbation of the true matrix $M$ of connection probabilities: We show that, for a graph generated by  model \eqref{zte} with a sufficiently small value of $\theta$, the {\tt V-Clust} algorithm with $K=k-1$ classes recovers the aggregated labelling defined by $g$ above with high probability. We do the proof in the possibly sparse situation, thereby also proving the upper-bound in Theorem \ref{thm-kcl-spa}.

There are three steps. First, we show that the initial label recovery algorithm $\cS$ of \cite{Lei:Rinaldo:2015:1} recovers most of the labels correctly, and control the error. Second, we show that the scheme of proof of \cite{Lei:Zhu:2017:1} carries over to the problem of recovering the aggregated clustering up to label permutation. Finally, using assumption (A3) one can recover the aggregated class $1$ with high probability, and restricting to nodes with label in that class we can apply the spectral method $\cS_2$ of the case $k=2$.\\[.5em]

\noindent{\em First step (Perturbed spectral method of Lei and Rinaldo).}
\[ M^\te = M^0 + \te R, \quad \text{with}\qquad R =
\begin{bmatrix}
1 & -1 & 0_{2,k-2}\\
-1 & 1 & 0_{2,k-2}\\
0_{k-2,1}& 0_{k-2,1}& 0_{k-2,k-2}
\end{bmatrix}
\]
The matrix $M^0$ (i.e. $M^\te$ with $\te=0$) can be transformed into the matrix $N$ above by removing the first line and then the first column.

Let $X$ be the matrix $(X_{ij})$. Since the relevant design is fixed, there exists a binary $n\times k$ matrix $T$, with a single 1 in each row, for which we have ${E[X]=TM^{\te}(\al_n)T^t+D}$, where $D=-\text{Diag}(TM^{\te}(\al_n)T^t)$ is a diagonal matrix with entries bounded by $\al_n$. Lei and Rinaldo call $T$ a membership matrix. It can be rewritten in terms of $N$, using the relation between $M^0$ and $N$ noted above: for a $n\times 
K$ membership matrix $S$ and $E[X]$ the expected value of $X$,
\[ E[X] = \al_nS N S^t + \al_n \te T R T^t + D. \]
Now we can follow Lei and Rinaldo's analysis of simple spectral clustering with $K=k-1$ and the expectation matrix $S N S^t$; one only needs to show that, despite the perturbation $\te T R T^t$, the argument still holds. Intuitively, this is guaranteed by the assumption that $\te$ is small 
enough, which ensures that the spectrum of the perturbation $\te T R T^t$ 
does not interact much with that of $S N S^t$. More precisely, we decompose $X$ as
\[ X = P + W,\qquad \text{with}\quad P := \al_nS N S^t, \quad W := \al_n\te T R T^t + D+ X-E[X]. \]
Following the proof of Theorem 3.1 of \cite{Lei:Rinaldo:2015:1}, the pair 
$(S,N)$ parametrises a SBM with $K=k-1$ classes and $N$ is full rank. By their Lemma 2.1, the eigendecomposition of $P=S(\al_nN)S^t$ can be written $P=UDU^t$, where $U$ is the matrix of the $K$ leading eigenvectors of $P$, and one can write $U=S\xi$, for some matrix $\xi\in\RR^{K\times K}$ with orthogonal rows (and $\|\xi_{k*}-\xi_{l*}\|^2=n_k^{-1}+n_l^{-1}$). It also follows from the proof of that Lemma that if $\ga_n$ denotes the smallest absolute nonzero eigenvalue of $P$, we have $\ga_n=n_{min}\al_n\la(K)$, with $n_{min}$ the cardinality of the smallest class, here of order $n/k$ using that classes are balanced, and $\la(K)$ the smallest absolute eigenvalue of $N$.

By Lemma 5.1 of  \cite{Lei:Rinaldo:2015:1}, one can control the distance between the leading eigenspaces of $X$ and $P$ (for the first $K$ non-zero eigenvalues) in terms of the spectral norm of $W$. The assumptions of that Lemma are fulfilled with $P$ here of rank $K=k-1$ and of smallest nonzero singular value $\ga_n$. If $\hat U\in \RR^{n\times K}$ is the matrix of the $K$ leading eigenvectors of $X$ (and $U$ the one for $P$, as above),  there exists a $K\times K$ orthogonal matrix $Q$ such that, with $\|\cdot\|_K$ and $\|\cdot\|_{Sp}$ the Frobenius and spectral norms respectively,
\[ \| \hat{U} - UQ\|_F \le \frac{2\sqrt{2K}}{\ga_n} \|X-P\|_{Sp}. \]
By the triangle inequality, the spectral norm $\|X-P\|_{Sp}$ is in turn bounded by
\[ \| X - P \|_{Sp} \le \| X-E[X] \|_{Sp} + \al_n|\te| \| TRT^t \|_{Sp}. \]
The matrix $R$ can be written $R=uu^t$, where $u^t$ is the row $(1 \,-1 
\ 0 \ldots 0)$ of length $k$. In particular, $R$ is of rank $1$, and $\|TRT^t\|_{Sp}=\|Tu\|_2^2$ (a nonzero eigenvector is $Tu$). By construction, $\|Tu\|_2^2=n_1+n_2$, the number of elements of classes $1$ and $2$, 
so that
$\|TRT^t\|_{Sp}\le Cn/K$. Also, $\|D\|_{Sp}\le \al_n$ since $D$ is diagonal with terms bounded by $\al_n$. By Theorem 5.2 of  \cite{Lei:Rinaldo:2015:1}, the norm $\|X-E[X]\|$ is, with probability at least $1-1/n^2$, no larger than $C\sqrt{n\al_n}$, for a sufficiently large constant $C$. Gathering the last bounds and using $\al_n\leqa \sqrt{n\al_n}$, one obtains $\|X-P\|_{Sp}\le C(\sqrt{n\al_n}+|\te| n\al_n/K)$.

On the other hand, following \citet{Lei:Rinaldo:2015:1}, one can perform an $(1+\veps)-$approxi\-mate $k$-means clustering on the rows of $\hat U$: Application of their Lemma 5.3 to the matrices $\hat U$ and $UQ$ shows the approximate $k$-means solution is a pair $(\hat{S},\hat{\xi})$, where 
$\hat S$ a membership matrix, $\hat{\xi}$ a $K\times K$ matrix, and $\hat{S}\hat{\xi}$ is an approximate least-squares fit to $\hat U$. Moreover, the estimated membership $\hat S$ coincides with $S$ up to label permutation, except on sets ${S_1,\ldots,S_K}$ that are characterized as follows: 
Recall that $\psi$ is the `true' labelling obtained by merging the original classes 1 and 2 of nodes. Each set $S_{j}\subset \psi^{-1}(j)$ satisfies
\[ \frac{1}{n_{min}}\sum_{j=1}^{K} |S_j| \le 4(4+2\veps)\|\hat U - UQ\|_F^2, \]
whenever
\begin{equation} \label{condu}
(16+8\veps)\|\hat U - UQ\|_F^2<1\;.
\end{equation}
This implies, using the previous bounds and
$\ga_n=n_{min}\al_n\la(K)\geqa (n/K)\al_n\la(K)$, that
\begin{align*}
 \frac1n \sum_{j=1}^K |S_j| & \le (16+8\veps)\pi_0  \frac{\|\hat U - UQ\|_F^2}{K} \\
 & \le (16+8\veps)\pi_0  \frac{8}{\ga_n^2}2C^2(n\al_n+\te^2\al_n^2\frac{n^2}{K^2})\\
 & \le \frac{CK^2}{\la(K)^2}(\frac{1}{n\al_n}+\frac{\te^2}{K^2}),
\end{align*}
provided, for some suitably small constant $c>0$, with $\la=\la(K)$,
\[  K^3\frac{1}{n\al_n\la^2} + K\frac{\te^2}{\la^2}<c. \]
The first summand coincides with the condition in  \cite{Lei:Rinaldo:2015:1}. The second term accounts for the perturbation induced by $\al_n\te R$. Provided that
\begin{equation} \label{lz_cond1}
 \frac{K^3}{\al_n\la^2}<cn/2 \quad\text{ and }\quad \te^2<\frac{c\la^2}{2K},
\end{equation}
the simple spectral clustering algorithm has recovery error at most $n/f(n\al_n,K)$, with
\begin{equation} \label{lz_error}
 f(n\al_n,K) = C (\frac{n\al_n\la^2}{K^2} \wedge \frac{\la^2}{\te^2}).
\end{equation}
The conditions on $n,\al_n, \la, K, \te$ permit this quantity to be chosen suitably large. This means that, with high probability, Step 1 of the algorithm with $K=k-1$ recovers a sufficiently large proportions of the labels of $N$, up to label permutation.\\[.5em]

\noindent{\em Second step (Lei and Zhu's exact label recovery method via sample splitting).} We can now use the method introduced by \citet{Lei:Zhu:2017:1}: using a first rough estimate of the labels, one can refine it to an exact label recovery with high probability, provided $f(n\al_n,K)$ is large enough in terms of a certain function of $K$. The recovered  labels are those of the original classes $3,4,\ldots,k$, and of the aggregated class containing classes $1$ and $2$. To verify that the proof of Lei and Zhu generalizes to the perturbed cased, it suffices to note that the distortion of $E[X_{ij}]$ for $i,j\in \psi^{-1}(\{1,2\})$ from $1/2$ to $1/2\pm\te$ does not interfere with the bounds of the proof of Theorem 2 in \cite{Lei:Zhu:2017:1}. The sample splitting algorithm of Lei and Zhu involves two subroutines called ${\tt CrossClust}$ and ${\tt Merge}$.
The mean of $X_{ij}$ enters in the proof of that result via two applications of Bernstein's inequality, in the proofs of Lemma 6 (which implies the consistency of {\tt CrossClust} via Lemma 3) and Lemma 7 (consistency of {\tt Merge}) of \citet{Lei:Zhu:2017:1}.

We impose the assumptions of \citet{Lei:Zhu:2017:1}, Theorem 2, on $K$ and $f(\al_nn/2,K)$: one needs $f(n\al_n/2,K)\ga(K)\ge CK^{2.5}$ and the inequalities $\al_n\ge CK^3\log{n}/(\ga^2(K)n)$ and $Cn\ge K^3$. The last two conditions are 
implied by (B2) (respectively (A2) in the dense case). By  \eqref{lz_error}, the first one is satisfied if
\[ C\frac{n\al_n\la^2}{K^2} \ga(K) \ge CK^{2.5},\qquad\text{and}\qquad \frac{\la^2}{\te^2}\ga(K) \ge CK^{2.5}.\]
Again, the first inequality holds by (B2). The second inequality asks for 
$\te^2<C\la^2\ga(K)/K^{2.5}$, which is guaranteed by \eqref{def_tk}.

The parameter $\te$ affects the proof of Lemma 3 of \citet{Lei:Zhu:2017:1} as follows. The proof relies on bounding three terms $T_1, T_2$ and $T_3$ via Lemma 6 in the Appendix of their paper. To be able to apply Bernstein's inequality on $T_1$, one needs
\[ (n_1+n_2)\al_n|\te|<\frac14 \pi_0\ga(K)\frac{n\al_n}{30K^{3/2}},\]
where $\pi_0=n_{min}/(n/K)\geqa 1$ in the notation of \cite{Lei:Zhu:2017:1}, Definition 1. To bound $T_2$, one needs
\[ (n_1+n_2)\al_n|\te|<\frac14  \frac{\pi_0^2\ga(K)n\al_nf(n\al_n/2,K)}{30K^{5/2}}\;, \]
while, similarly, to bound $T_3$ one needs
\[ (n_1+n_2)\al_n|\te|<\frac14 \pi_0\ga(K)\frac{n\al_n}{15K^{3/2}}. \]
On the other hand, consistency of {\tt Merge} with $V=2$ requires
\[ (n_1+n_2)\al_n|\te|<\frac14 \frac{\al_n\pi_0^2\ga(K)n^2}{160K^3}. \]
The condition required for $T_1$ implies the remaining ones: The one required for $T_2$ is weaker, since
$f(n\al_n,K)>C'K$ for some constant $C'>0$, by \eqref{lz_cond1}--\eqref{lz_error}. The condition for {\tt Merge} is also weaker up to constants, provided that $n\geqa  K^{3/2}$, which is satisfied under our conditions.
To obtain the above Bernstein's inequalities, one thus needs
\[ |\te| < c_1\frac{\ga(K)}{\sqrt{K}}. \]
It follows from the spectral decomposition of $N$ that $\la(K)\le \ga(K)/\sqrt{2}$. By combining with \eqref{lz_cond1}, we see that it is enough that $\te$ satisfies $|\te|\le C\la/\sqrt{K}$, which was already required above. Finally, to see that the last inequality is satisfied, one notes that it is implied by \eqref{def_tk}, using that $\ga(K)\le \sqrt{K}$.  We 
have just proved that the exact recovery from \cite{Lei:Zhu:2017:1} also holds here.
\\[.5em]

\noindent{\em Third step (Finding true cluster $1$ and conclusion).} The second step provides a labelling $\hat g$ that coincides, up to permutation, with the aggregated labelling $g$ with high probability. The assumed separation from $1/2$ allows us to identify  cluster $1$: For $l=1,\ldots,k-1$, compute
\[ \hat N_{ll} := \frac{1}{{{\hat{g}^{-1}(l)|} \choose {2}}}\sum_{i<j,\, i,j\in \hat{g}^{-1}(l)} X_{ij}.\]
Since class sizes are of order $n/k$, an application of Bernstein's inequality
gives
\[ |\hat N_{ll} - N_{\sigma(l)\sigma(l)}|\le |\te|\1_{\sigma(l)=1} + O\Big(\sqrt{\frac{k}{n}\log{n}} \Big)\] for some permutation $\sigma$, with 
high probability. By (A3),  if $l\neq 1$, we have $|N_{ll}-1/2|\ge \kappa$. So w.h.p. there is exactly one diagonal element $\hat N_{ll}=\hat N_{\hat{\ell}\hat{\ell}}$ within $\kappa/2$ of $1/2$, since the conditions of the theorem imply
\[ |\te| + C\sqrt{\frac{k}{n}\log{n}} \le  \frac{\kappa}{2}. \]
The index $\hat\ell$ then identifies the first cluster of $N$---which is the aggregate cluster corresponding to clusters 1 and 2 defined by $M^\te$---with high probability. 
We can now apply the spectral algorithm for $k=2$ to the induced submatrix $(X_{ij})_{i,j\in\hat{g}^{-1}(\hat \ell)}$. Using the upper-bound part of Theorem \ref{thm-twocl-spa} with a number of nodes $|\hat{g}^{-1}(\hat\ell)| \asymp n/k$ leads to $E_\te[(\hat\te-\te)^2]\le Ck/(n\al_n)$, by 
observing that the event with high probability arising from the previous arguments (that is, the concentration result by  \cite{Lei:Rinaldo:2015:1} and Bernstein's inequalities)
holds with probability at least $1-1/n$.
\end{proof}

\end{document}